\newtheorem{theorem}{Theorem}[section]
\newtheorem{lemma}[theorem]{Lemma}
\newtheorem{corollary}[theorem]{Corollary}
\newtheorem{sublemma}{}[theorem]
\newtheorem{conjecture}[theorem]{Conjecture}
\newenvironment{subproof}[1][\proofname]{%
  \begin{proof}[#1]%
}{%
  \end{proof}%
}
\numberwithin{equation}{section}
\theoremstyle{definition}
\newtheorem{definition}[theorem]{Definition}
\theoremstyle{remark}
\newcommand{\seq}[1]{[#1]}
\newcommand{\ba}{\backslash}
\DeclareMathOperator{\cl}{cl}
\begin{document}
\title{On a generalisation of spikes}

\author[N.~Brettell]{Nick Brettell}
\address{School of Mathematics and Statistics\\
  Victoria University of Wellington\\
New Zealand}
\email{nbrettell@gmail.com}

\author[R.~Campbell]{Rutger Campbell}
\address{Department of Combinatorics and Optimization\\
  University of Waterloo\\
Canada}
\email{rtrjvdc@gmail.com}

\author[D.~Chun]{Deborah Chun}
\address{Mathematics Department\\West Virginia University Institute of Technology\\410 Neville Street\\Beckley, West Virginia 25801\\USA}
\email{deborah.chun@mail.wvu.edu}

\author[K.~Grace]{Kevin Grace}
\address{Department of Mathematics\\
  Louisiana State University\\
Baton Rouge, Louisiana\\USA}
\email{kgrace3@lsu.edu}

\author[G.~Whittle]{Geoff Whittle}
\address{School of Mathematics and Statistics\\
  Victoria University of Wellington\\
New Zealand}
\email{geoff.whittle@vuw.ac.nz}

\thanks{The authors would like to thank  the MAThematical Research Institute (MATRIx), Creswick, Victoria, Australia, for support and hospitality during the \href{https://www.matrix-inst.org.au/events/tutte-centenary-retreat/}{Tutte Centenary Retreat}, 26 Nov -- 2 Dec 2017, where work on this paper was initiated. The first and fifth authors were supported by the New Zealand Marsden Fund. The second author was supported by NSERC Scholarship PGSD3-489418-2016. The fourth author was supported by National Science Foundation grant 1500343.}

\subjclass{05B35}
\date{\today}

\begin{abstract}
  We consider matroids with the property that every subset of the ground set of size~$t$ is contained in both an $\ell$-element circuit and an $\ell$-element cocircuit; we say that such a matroid has the \emph{$(t,\ell)$-property}.
  We show that for any positive integer $t$, there is a finite number of matroids with the $(t,\ell)$-property for $\ell<2t$; however, matroids with the $(t,2t)$-property form an infinite family.
  We say a matroid is a \emph{$t$-spike} if there is a partition of the ground set into pairs such that the union of any $t$ pairs is a circuit and a cocircuit.
  Our main result is that if a sufficiently large matroid has the $(t,2t)$-property, then it is a $t$-spike.
  Finally, we present some properties of $t$-spikes.
\end{abstract}

\maketitle

\section{Introduction}

For all $r \ge 3$, a rank-$r$ \emph{spike} is a matroid on $2r$ elements with a partition $(X_1,X_2,\dotsc,X_r)$ into pairs such that $X_i \cup X_j$ is a circuit and a cocircuit for all distinct $i,j\in \{1,2,\dotsc,r\}$.
Spikes frequently arise in the matroid theory literature (see, for example, \cite{ovw1996,seymour1981,ggw2002,geelen2008}) as a seemingly benign, yet wild, class of matroids.
Miller~\cite{miller2014} proved that if $M$ is a sufficiently large matroid 
having the property that every two elements share both a $4$-element circuit and a $4$-element cocircuit, then $M$ is a spike.

We consider generalisations of this result. 
We say that a matroid~$M$ has the \emph{$(t,\ell)$-property} if every $t$-element subset of $E(M)$ is contained in both an $\ell$-element circuit and an $\ell$-element cocircuit.
It is well known that
the only matroids with the $(1,3)$-property are wheels and whirls, and
Miller's result shows that if $M$ is a sufficiently large matroid with the $(2,4)$-property, then $M$ is a spike.

We first show that when $\ell < 2t$, there are only finitely many matroids with the $(t,\ell)$-property.
However, for any positive integer $t$, the matroids with the $(t,2t)$-property form an infinite class:
when $t=1$, this is the class of matroids obtained by taking direct sums of copies of $U_{1,2}$;
when $t = 2$, the class contains the infinite family of spikes.
Our main result is the following:
\begin{theorem}
  \label{mainthm}
  There exists a function $f$ such that if $M$ is a matroid with the $(t,2t)$-property, and $|E(M)| \ge f(t)$, then $E(M)$ has a partition into pairs such that the union of any $t$ pairs is both a circuit and a cocircuit.
\end{theorem}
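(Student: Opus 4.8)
The plan is to exhibit the required partition as the set of equivalence classes of a natural relation on $E(M)$, and to use the hypothesis $|E(M)|\ge f(t)$ only to control the sizes of those classes and the way large circuits and cocircuits meet.

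\textbf{Reductions.} If $t=1$, then for a given element the $2$-element circuit and the $2$-element cocircuit through it cannot meet in a single element, so they coincide; a rank count then forces that pair to be a connected component isomorphic to $U_{1,2}$, and induction gives that $M$ is a direct sum of copies of $U_{1,2}$, with the claim immediate. So assume $t\ge 2$. Next I would show $M$ is connected: since a circuit lies inside one component, a $t$-set meeting two components would lie in no circuit, so any two components together have fewer than $t$ elements; but the component of any element must contain a $2t$-circuit through that element, hence have at least $2t$ elements, which is impossible once there are two components. Write $N=|E(M)|$ and assume $N$ exceeds whatever threshold the later steps require.

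\textbf{The candidate partition.} Declare $e\sim f$ when $e$ and $f$ lie in exactly the same $2t$-element circuits of $M$. This is visibly an equivalence relation, and straight from the definition every $2t$-element circuit is a union of $\sim$-classes. I claim that, for $N$ large, (a) every $\sim$-class has exactly two elements, and (b) every $2t$-element cocircuit is also a union of $\sim$-classes (equivalently, the analogous relation defined via cocircuits has the same classes). Granting (a) and (b) the theorem drops out: the classes partition $E(M)$ into pairs, and given any $t$ of them we choose a transversal $t$-set $T$ meeting each in one element; the guaranteed $2t$-circuit through $T$ is a union of classes and contains a point of each of the chosen $t$ classes, so by a count of elements it is exactly their union, which is therefore a circuit; applying the identical argument to the $2t$-cocircuit through $T$, using (b), shows this union is a cocircuit as well.

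\textbf{The crux.} Parts (a) and (b) are where the bound $N\ge f(t)$ is used essentially and where the real difficulty lies. For the ``at most two'' half of (a), a class $Y$ with $|Y|\ge 3$ behaves as a single point for every $2t$-circuit; I would feed this into orthogonality against a $2t$-cocircuit chosen through a $t$-set meeting $Y$, together with circuit elimination among the many available $2t$-circuits, to force $N$ to be bounded, contradicting largeness. For the ``at least two'' half, given $e$ I must produce a genuine partner: analyse the family of $2t$-circuits through $e$, show (via a $2t$-cocircuit through $e$ and orthogonality) that they cannot all meet only in $\{e\}$, so that some second element $f$ lies in all of them, and then use largeness to rule out $f$ lying in any extra $2t$-circuit, giving $e\sim f$. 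Part (b) is a comparable orthogonality argument: were a $2t$-cocircuit $C^*$ to separate two elements of a class, every one of the abundantly many $2t$-circuits through that class would be forced to meet $C^*$ in a way that, counted up, is impossible for $N$ large. I expect the hardest point to be (a) — pinning down that the classes are precisely pairs rather than merely of bounded size — and $f(t)$ is then just the maximum of the thresholds demanded by these steps.
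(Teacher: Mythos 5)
Your reduction is fine: if (a) the $\sim$-classes are all pairs and (b) every $2t$-cocircuit is a union of $\sim$-classes, then picking a transversal of any $t$ classes and invoking the $(t,2t)$-property does force the union of those $t$ classes to be the guaranteed $2t$-circuit and the guaranteed $2t$-cocircuit, exactly as you say. And in a large $t$-spike the $\sim$-classes really are the arms, so you have identified the right combinatorial object. The problem is that (a) is essentially the whole theorem, and your sketch of it contains a non-sequitur at the decisive moment.

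For the ``at least two'' half of (a) you argue: fix a $2t$-cocircuit $C^*$ through $e$; by orthogonality each $2t$-circuit through $e$ meets $C^*$ in a second element; ``so that some second element $f$ lies in all of them.'' That last inference does not follow. Orthogonality gives you, for each circuit $C_i\ni e$, \emph{some} $f_i\in C^*\setminus\{e\}$ with $f_i\in C_i$, but nothing forces the $f_i$ to be equal, and pigeonhole on the $2t-1$ elements of $C^*\setminus\{e\}$ only gives you an $f$ lying in a positive fraction of the circuits, not all of them. Worse, even if every $2t$-circuit through $e$ did contain a common $f$, you would still need the converse -- that every $2t$-circuit through $f$ contains $e$ -- before $e\sim f$; the symmetry of $\sim$ is not free. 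A similar issue affects (b): orthogonality alone bounds intersections of single circuit/cocircuit pairs, and does not immediately aggregate into the global statement that a cocircuit never splits a class. This is precisely the obstruction the paper works hard to overcome; it does so by a very different mechanism, first manufacturing $t-1$ pairwise disjoint $2t$-cocircuits and analysing the $2t$-circuits that cross them (\cref{lemmaA}, \cref{setup}, \cref{payoff}), then running an iterated hypergraph Ramsey argument (\cref{hyperramsey}) to extract a large $t$-echidna or $t$-coechidna, and finally showing such an echidna swamps all of $E(M)$ (\cref{lem:swamping}). None of that machinery has an analogue in your sketch, and I don't see how to carry your orthogonality argument alone across the gap. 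If you want to pursue your framing, the honest statement is that proving your (a) is equivalent in difficulty to the theorem itself, and it is exactly here that you would need to reintroduce Ramsey-type tools.
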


\noindent
We call a matroid with such a partition a \emph{$t$-spike}.
(A traditional spike is a $2$-spike.  Note also that what we call a spike is sometimes referred to as a \emph{tipless spike}.)

We also prove some properties of $t$-spikes, which demonstrate that $t$-spikes are highly structured matroids.
In particular, a $t$-spike has $2r$ elements for some positive integer~$r$, it has rank~$r$ (and corank~$r$), any circuit that is not a union of $t$ pairs avoids at most $t-2$ of the pairs, and any sufficiently large $t$-spike is $(2t-1)$-connected.
We show that a $t$-spike's partition into pairs describes crossing $(2t-1)$-separations in the matroid; that is, an appropriate concatenation of this partition is a $(2t-1)$-flower (more specifically, a $(2t-1)$-anemone), following the terminology of~\cite{ao2008}.
We also describe a construction of a $(t+1)$-spike from a $t$-spike, 
and show that every $(t+1)$-spike can be obtained from some $t$-spike in this way.

Our methods in this paper are extremal, so the lower bounds on $|E(M)|$ that we obtain, given by the function $f$, are extremely large, and we make no attempts to optimise these.  For $t=2$, Miller~\cite{miller2014} showed that $f(2)=13$ is best possible, and he described the other matroids with the $(2,4)$-property when $|E(M)| \le 12$.  We see no reason why a similar analysis could not be undertaken for, say, $t=3$.

There are a number of interesting variants of the $(t,\ell)$-property.
In particular, 
we say that a matroid has the \emph{$(t_1,\ell_1,t_2,\ell_2)$-property} if
every $t_1$-element set is contained in an $\ell_1$-element circuit, and every $t_2$-element set is contained in an $\ell_2$-element cocircuit.
Although we focus here on the case where $t_1=t_2$ and $\ell_1=\ell_2$, we show, in \cref{sec:smallL}, that there are only finitely many matroids with the $(t_1,\ell_1,t_2,\ell_2)$-property when 
$\ell_1 < 2t_1$ or $\ell_2 < 2t_2$.
Oxley et al.~\cite{pfeil} recently considered the case where $(t_1,\ell_1,t_2,\ell_2)=(2,4,1,k)$ and $k\in\{3,4\}$.
In particular, they proved, for $k \in \{3,4\}$, that a $k$-connected matroid~$M$ with $|E(M)|\ge k^2$ has the $(2,4,1,k)$-property if and only if $M \cong M(K_{k,n})$ for some $n \ge k$.
This gives credence to the idea that sufficiently large matroids with the $(t_1,\ell_1,t_2,\ell_2)$-property, for appropriate values of $t_1,\ell_1,t_2,\ell_2$, may form structured classes.  In particular, we conjecture the following generalisation of \cref{mainthm}:

\begin{conjecture}
  There exists a function $f(t_1,t_2)$ such that
  if $M$ is a matroid with the $(t_1,2t_1,t_2,2t_2)$-property, for positive integers $t_1$ and $t_2$, and $|E(M)| \ge f(t_1,t_2)$, 
  then $E(M)$ has a partition into pairs such that the union of any $t_1$ pairs is a circuit, and the union of any $t_2$ pairs is a cocircuit.
\end{conjecture}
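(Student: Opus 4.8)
I would attack this by running (a reorganised version of) the proof of \cref{mainthm}. First, since passing to $M^*$ turns the $(t_1,2t_1,t_2,2t_2)$-property into the $(t_2,2t_2,t_1,2t_1)$-property and carries a good partition to a good partition with $t_1,t_2$ interchanged, I may assume $t_1\le t_2$. The elementary observations transfer verbatim: a circuit of size at most $t_1$ could be extended to a $t_1$-element set and hence enlarged to a circuit properly containing it, which is impossible, so $M$ has girth at least $t_1+1$; dually the cogirth is at least $t_2+1$; and $M$ certainly has $2t_1$-element circuits and $2t_2$-element cocircuits. (The finiteness statement quoted in the introduction confirms that $2t_1$ and $2t_2$ are the correct sizes to aim for.)

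Phase one is to produce, using mainly the circuit half of the hypothesis, a partition of $E(M)$ into pairs $P_1,\dots,P_N$ such that the union of any $t_1$ of the $P_i$ is a circuit. This should go exactly as in the proof of \cref{mainthm}: once $|E(M)|\ge f(t_1,t_2)$ a counting argument forces $M$ to have enormously many $2t_1$-element circuits, the sunflower lemma extracts a large sunflower of them, and after refining the sunflower --- ruling out the degenerate petal sizes (petal size $1$ would create an overly large flat of rank $2t_1-1$, which the cocircuit half of the hypothesis rules out, while disjoint circuits and large petals are pared down by intersecting with further $2t_1$-element circuits) and controlling intermediate petal sizes by circuit elimination against the girth bound --- one is left with a core that is a union of $t_1-1$ pairs and petals that are single pairs. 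A rotation argument over different cores then upgrades this so that the union of \emph{any} $t_1$ of the pairs is a circuit, and a final argument shows no element is left outside the pairs. The reorganisation needed is to carry all of this out while only invoking the cocircuit hypothesis in the cheap ways above.

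Phase two derives the cocircuit condition from the partition directly, bypassing any cocircuit sunflower. Let $P_{i_1},\dots,P_{i_{t_2}}$ be any $t_2$ of the pairs and let $S$ consist of one element from each; by hypothesis $S$ lies in a $2t_2$-element cocircuit $D^*=E(M)\setminus H$ with $H$ a hyperplane. Since $D^*$ has only $2t_2$ elements, all but at most $3t_2$ of the pairs lie entirely inside $H$; as $N$ is huge we may pick $t_1-1$ such pairs avoiding $P_{i_1}$, and the union of $P_{i_1}$ with these $t_1-1$ pairs is a circuit $C$ with $C\cap D^*\subseteq P_{i_1}\cap D^*$. Orthogonality ($|C\cap D^*|\ne 1$) then forces the partner in $P_{i_1}$ of the chosen element of $S$ to lie in $D^*$ as well; running this over all $P_{i_k}$ shows $P_{i_1}\cup\dots\cup P_{i_{t_2}}\subseteq D^*$, and equality of sizes gives $D^*=P_{i_1}\cup\dots\cup P_{i_{t_2}}$. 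Thus every union of $t_2$ of the pairs is a cocircuit, completing the proof. The main obstacle is phase one: making the proof of \cref{mainthm} yield a genuine partition into pairs with all unions of $t_1$ pairs being circuits while leaning on cocircuits only for the degeneracy-killing steps, together with the usual difficulty --- responsible for the astronomical $f$ --- of pushing the bounded-size sunflower structure out to all of $E(M)$.
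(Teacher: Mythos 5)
The statement you are attempting to prove is presented in the paper as an open \emph{conjecture}, not as a theorem; the paper offers no proof of it, so there is nothing to compare your argument against directly.  Evaluating the proposal on its own merits: your Phase~2 is correct, and it is essentially the argument behind \cref{lem:coechidna} restated for the final partition --- the observation that a $2t_2$-element cocircuit through a transversal of $t_2$ pairs must, by orthogonality with the $2t_1$-element circuits already known from the echidna, swallow each of those pairs whole, and so is forced to be exactly that union.  The overall plan, ``first obtain the echidna, then read off the coechidna by orthogonality'', is the natural way to asymmetrize the proof of \cref{mainthm}.

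Phase~1 as described, however, has a genuine dependency gap.  The ``final argument shows no element is left outside the pairs'' is the analogue of \cref{lem:swamping}, and that lemma's proof leans on \cref{lem:rep-orthog} and \cref{lem:coechidna}, which already use the fact that unions of spines of the partial echidna are \emph{cocircuits}; it cannot be run with ``mainly the circuit half''.  So the order of operations must be: build a large but \emph{partial} $t_1$-echidna; then prove (your Phase~2 argument, applied to the partial structure) that it is a $t_2$-coechidna once its order exceeds roughly $2t_2+t_1$; and only then extend it to a partition of $E(M)$.  Relatedly, your summary of Phase~1 as a sunflower of $2t_1$-element circuits with degenerate petals pruned does not match what the proof of \cref{mainthm} actually does: the paper first fixes $t-1$ pairwise disjoint $2t$-element cocircuits as a scaffold and builds the pair structure in $M/X$ relative to them (\cref{setup}, \cref{payoff}), so the cocircuit hypothesis is structural throughout, not merely degeneracy-killing.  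Finally, the iterated Ramsey step (\cref{iterramsey}) may terminate with a large $t_2$-coechidna of $M$ rather than a $t_1$-echidna; this is benign by duality but must be addressed, and the WLOG $t_1\le t_2$ reduction at the start of your proposal does not by itself dispose of it.  These points look resolvable, but until they are worked through the proposal is a plausible strategy rather than a proof.
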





The study of matroids with the $(t,2t)$-property was motivated by problems in matroid connectivity.
Tutte proved that wheels and whirls (that is, matroids with the $(1,3)$-property) are the only $3$-connected matroids with no element whose deletion or contraction preserves $3$-connectivity~\cite{tutte1966}.
Moreover, spikes (matroids with the $(2,4)$-property) are the only $3$-connected matroids with $|E(M)| \ge 13$ having no triangles or triads, and no pair of elements whose deletion or contraction preserves $3$-connectivity~\cite{williams2015}.
We envision that $t$-spikes could also play a role in a connectivity ``chain theorem'': they are $(2t-1)$-connected matroids, having no circuits or cocircuits of size $(2t-1)$, with the property that for every $t$-element subset $X \subseteq E(M)$, neither $M/X$ nor $M \ba X$ is $(t+1)$-connected.
We conjecture the following:
\begin{conjecture}
  There exists a function $f(t)$ such that if $M$ is a $(2t-1)$-connected matroid with no circuits or cocircuits of size $2t-1$, and $|E(M)| \ge f(t)$, then either
  \begin{enumerate}
    \item there exists a $t$-element set $X \subseteq E(M)$ such that either $M/X$ or $M \ba X$ is $(t+1)$-connected, or
    \item $M$ is a $t$-spike.
  \end{enumerate}
\end{conjecture}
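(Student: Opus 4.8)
The plan is to recover the partition into pairs directly from the family of $2t$-element circuits and cocircuits, and then to check that it behaves as claimed. Say that $e,f\in E(M)$ are \emph{circuit-partners} if no $2t$-element circuit contains exactly one of $e$ and $f$. From the definition alone one sees that this is an equivalence relation on $E(M)$ (if a $2t$-element circuit $C$ separated $e$ from $g$, say $e\in C$ and $g\notin C$, then $e$ being a circuit-partner of $f$ would force $f\in C$, so $C$ would separate $f$ from $g$), and that every $2t$-element circuit is a union of circuit-partner classes. Dually, being a \emph{cocircuit-partner} is an equivalence relation, and every $2t$-element cocircuit is a union of cocircuit-partner classes. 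The crux of the whole proof is the following: provided $|E(M)|$ is large enough, every circuit-partner class and every cocircuit-partner class has size exactly two.

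Granting the crux, let $(X_1,\dots,X_r)$ and $(Y_1,\dots,Y_r)$ be the partitions of $E(M)$ into circuit-partner pairs and cocircuit-partner pairs, respectively. First I would show these partitions coincide. Suppose not: some $e$ has a cocircuit-partner $e^*$ different from its circuit-partner $e'$. Then $e$ and $e'$ lie in distinct cocircuit-partner pairs, so some $2t$-element cocircuit $D$ contains exactly one of them; say $e\in D$ and $e'\notin D$ (the other case is symmetric). Now pick one element from each of $t-1$ circuit-pairs that are disjoint from $D$ and from $\{e,e'\}$ (possible since $D$ meets only boundedly many circuit-pairs while $r$ is large), and adjoin $e$ to get a $t$-set $Z$. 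The $2t$-element circuit $C$ containing $Z$ is a union of circuit-pairs meeting each of these $t$ pairs, hence, by cardinality, equals their union; so $C\cap D=\{e\}$, contradicting orthogonality. The same device yields the conclusion of the theorem: given any pairs $X_{i_1},\dots,X_{i_t}$, choosing one element from each produces a $t$-set whose $2t$-element circuit must equal $X_{i_1}\cup\dots\cup X_{i_t}$, and the dual argument (using that the partitions agree) shows this set is also a cocircuit. The numerous ``pick one element from each of several pairs'' and pigeonhole steps each demand that $r=|E(M)|/2$ exceed some explicit threshold depending on $t$; assembling these thresholds gives $f$, which---the argument being extremal---is enormous, and we make no attempt to optimise it.

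The real work is the crux. For the lower bound on class sizes, one fixes a $2t$-element circuit $C$ through an element $e$ and must exhibit some $f\in C\setminus\{e\}$ that no $2t$-element circuit separates from $e$; for the upper bound, one must rule out three pairwise circuit-partners. Both rest on controlling how two $2t$-element circuits can intersect---in particular showing they never meet in exactly $2t-1$ elements and that their intersections are rigid---and this, I expect, is the main obstacle. A natural approach is first to prove that $M$ is close to $(2t-1)$-connected, perhaps after deleting and contracting a bounded number of exceptional elements, and that its only circuits and cocircuits of size less than about $r-t$ are the genuine $2t$-element ones; one can then organise the crossing low-order separations of $M$ into a flower---in fact an anemone in the terminology of~\cite{ao2008}---and refine its petals down to the level of pairs. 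Extracting that connectivity and rigidity from the bare $(t,2t)$-property, using circuit elimination, orthogonality, and the abundance of $2t$-element circuits and cocircuits (one through every $t$-set), is the heart of the matter; it is the place where Miller's argument for $t=2$~\cite{miller2014} must be substantially generalised.
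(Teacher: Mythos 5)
The statement you were given is a \emph{conjecture} in the paper, left open; there is no paper proof to compare against. More importantly, your argument does not engage with the conjecture's hypotheses or its disjunctive conclusion. The conjecture's hypothesis is only that $M$ is $(2t-1)$-connected with no circuits or cocircuits of size $2t-1$; it does \emph{not} assume the $(t,2t)$-property. Yet your argument uses that property throughout---for instance, ``choosing one element from each produces a $t$-set whose $2t$-element circuit must equal \dots'' presupposes that every $t$-set lies in a $2t$-element circuit, and your construction of the cocircuit-partner partition likewise presupposes a $2t$-element cocircuit through every $t$-set. What the conjecture actually asks is: if in addition \emph{no} $t$-element set $X$ has $M/X$ or $M\ba X$ being $(t+1)$-connected (i.e.\ the first disjunct fails), then $M$ is a $t$-spike. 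The genuine content, and the step your proposal skips entirely, is to deduce the $(t,2t)$-property from those connectivity hypotheses; once that is done, \cref{mainthm} of the paper already delivers the $t$-spike conclusion.

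Even read as an attempt at \cref{mainthm} itself (whose hypotheses you are implicitly using), the argument is incomplete: the central claim that circuit-partner and cocircuit-partner classes all have size exactly two is correctly identified as ``the crux'' but is not proved, and that is precisely where all the difficulty lies. The paper's route to \cref{mainthm} proceeds via $t$-echidnas, collections of pairwise-disjoint $2t$-element cocircuits, and a hypergraph Ramsey argument to locate a large uniform family of pairs, followed by the swamping lemma to extend it to all of $E(M)$; your circuit-partner relation is a plausible heuristic for where the pairs ultimately come from, but you have not indicated how to establish the size-two claim, and it is not clear that it can be done without comparable Ramsey-type machinery.
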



This paper is structured as follows.
In \cref{sec:smallL}, we prove that there are only finitely many matroids with the $(t,\ell)$-property, for $\ell < 2t$.
In \cref{sec:echidnas}, we define $t$-echidnas and $t$-spikes, and show that a matroid with the $(t,2t)$-property and having a sufficiently large $t$-echidna is a $t$-spike.
We prove \cref{mainthm} in \cref{sec:t2t}.
Finally, we present some properties of $t$-spikes in \cref{sec:tspikeprops}.

\section{Preliminaries}

Our notation and terminology follow Oxley~\cite{oxbook}.
We refer to the fact that a circuit and a cocircuit cannot intersect in exactly one element as ``orthogonality''.
We say that a $k$-element set is a \emph{$k$-set}.  A set~$S_1$ \emph{meets} a set~$S_2$ if $S_1 \cap S_2 \neq \emptyset$.
We denote $\{1,2,\dotsc,n\}$ by $\seq{n}$, and, for positive integers $i < j$, we denote $\{i,i+1,\dotsc,j\}$ by $[i,j]$.
We denote the set of positive integers by $\mathbb{N}$.

\begin{lemma}
  \label{lem:Ramsey}
  There exists a function $f : \mathbb{N} \times \mathbb{N} \rightarrow \mathbb{N}$ such that, if $\mathcal{S}$ is a collection of distinct $s$-sets and $|\mathcal{S}|\geq f(s,n)$, then there is some $\mathcal{S}'\subseteq\mathcal{S}$ with $|\mathcal{S}'|=n$, and a set $J$ with $0\le |J|<s$, such that $S_1\cap S_2=J$ for all distinct $S_1,S_2\in\mathcal{S}'$.
\end{lemma}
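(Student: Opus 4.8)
The plan is to derive this from the classical Sunflower Lemma of Erdős–Rado, or alternatively to prove it directly by a double induction. Since the paper says "our methods are extremal" and makes no attempt to optimise, I would give the cleanest self-contained argument. The cleanest route is induction on $s$, the common size of the sets, with the base case $s=1$ being trivial: distinct $1$-sets are pairwise disjoint, so any $n$ of them work with $J=\emptyset$ (provided $f(1,n)\ge n$).

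For the inductive step, suppose the result holds for $s-1$ with some function $f(s-1,\cdot)$. Given a large collection $\mathcal{S}$ of $s$-sets, I would consider two cases. First, if there is a single element $x$ lying in at least $f(s-1,n)$ of the sets in $\mathcal{S}$, then delete $x$ from each such set to obtain a collection of distinct $(s-1)$-sets (distinctness is preserved since the original sets all contained $x$), apply the inductive hypothesis to get $\mathcal{S}''$ of size $n$ with pairwise intersection $J'$, and then add $x$ back: the corresponding sets in $\mathcal{S}$ have pairwise intersection $J' \cup \{x\}$, which has size at most $s-1$ as required. Second, if no element lies in $f(s-1,n)$ of the sets, then I build a large subcollection of pairwise disjoint sets greedily: each chosen set "blocks" fewer than $s \cdot f(s-1,n)$ other sets (each of its $s$ elements appearing in fewer than $f(s-1,n)$ sets), so if $|\mathcal{S}|$ is large enough we can extract $n$ pairwise disjoint sets, giving $J=\emptyset$. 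Setting $f(s,n)$ to be the maximum of $n$ and something like $n \cdot s \cdot f(s-1,n)$ (with $f(0,n)=1$ or $f(1,n)=n$ as the base) makes both cases go through; I would not bother tracking the optimal constant.

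The main obstacle — though it is a mild one — is bookkeeping the case split cleanly so that the two cases are genuinely exhaustive and the recursive bound on $f$ is consistent. One has to be slightly careful that in the first case the element $x$ is removed from enough sets and distinctness is genuinely retained, and in the second case that the greedy disjointness argument terminates with $n$ sets rather than fewer; both just require $|\mathcal{S}|$ to exceed the stated threshold. There are no deeper difficulties: this is essentially the Erdős–Rado sunflower lemma dressed up so that the "core" $J$ is allowed to be any set of size less than $s$ (rather than insisting on a genuine sunflower with many petals), which if anything makes the argument easier. Alternatively, one could simply cite the sunflower lemma directly and observe that a sunflower with $n$ petals is exactly the conclusion; I would likely do that and relegate the inductive proof to a remark, but the inductive proof above is short enough to include in full.
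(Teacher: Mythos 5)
Your proof is correct and essentially the same as the paper's: both proceed by induction on $s$, with the dichotomy between finding $n$ pairwise disjoint sets (giving $J=\emptyset$) and finding an element common to at least $f(s-1,n)$ sets (then deleting it, recursing on the resulting $(s-1)$-sets, and adding it back into the core). The paper reaches the popular element by taking a maximal pairwise-disjoint subfamily and pigeonholing over its $\le (n-1)s$ elements, whereas you ask directly whether a popular element exists and, if not, run the greedy disjointness argument; these are the same argument with the case split phrased in the opposite order, and the recursive bounds $s(n-1)f(s-1,n)$ versus $n\cdot s\cdot f(s-1,n)$ differ only cosmetically.
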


\begin{proof}
  We define $f(1,n)=n$, and $f(s,n)=s(n-1)f(s-1,n)$ for $s > 1$.
  Note that $f$ is increasing.
  We claim that this function satisfies the lemma.
  We proceed by induction on $s$.
  If $s=1$, then the claim holds with $J=\emptyset$.

  Let $\mathcal{S}$ be a collection of $s$-sets with $|\mathcal{S}|\geq f(s,n)$.
  Suppose there are $n$ pairwise disjoint sets in $\mathcal{S}$.
  Then the desired conditions are satisfied if we take $J=\emptyset$.
  Thus, we may assume that there is some maximal $\mathcal{D} \subseteq \mathcal{S}$ consisting of pairwise disjoint sets, with $|\mathcal{D}|\leq n-1$.
  Each $S \in \mathcal{S}-\mathcal{D}$ meets some $D \in \mathcal{D}$.
  Each such $D$ has $s$ elements.
  Therefore, each $S \in \mathcal{S}$ contains at least one of $(n-1)s$ elements $e\in\cup \mathcal{D}$.
  By the pigeonhole principle, there is some $e\in\cup\mathcal{D}$ such that
  $$|\{S\in\mathcal{S}:e\in S\}|\geq\frac{f(s,n)}{(n-1)s}= f(s-1,n).$$

  Let $\mathcal{T}=\{S-\{e\}:e\in S\in\mathcal{S}\}$. Then, for every $T\in\mathcal{T}$, we have $|T|=s-1$.
  Moreover, $|\mathcal{T}|=|\{S\in\mathcal{S}:e\in S\}|\geq f(s-1,n)$.
  By the induction assumption, there is a subset $\mathcal{T}'\subseteq\mathcal{T}$ with $|\mathcal{T}'|=n$ and a set $J'$, with $|J'|<s-1$ such that $T_1\cap T_2=J'$ for all distinct $T_1,T_2\in\mathcal{T}'$. Let $\mathcal{S}'=\{T\cup \{e\}: T\in\mathcal{T}'\}$.
  Then, $\mathcal{S}'\subseteq\mathcal{S}$ with $|\mathcal{S}'|=n$ such that $S_1\cap S_2=J'\cup\{e\}$ for all distinct $S_1,S_2\in\mathcal{S}'$, and $|J\cup\{e\}| < s$.
\end{proof}

\section{Matroids with the $(t,\ell)$-property for $\ell < 2t$}
\label{sec:smallL}

Recall that a matroid has the $(t_1,\ell_1,t_2,\ell_2)$-property if every $t_1$-element set is contained in an $\ell_1$-element circuit, and every $t_2$-element set is contained in an $\ell_2$-element cocircuit.
In this section, we prove that there are only finitely many matroids with the $(t_1,\ell_1,t_2,\ell_2)$-property if $\ell_2 < 2t_2$.
By duality, the same is true if $\ell_1 < 2t_1$.
As a special case, we have that there are only finitely many matroids with the $(t,\ell)$-property for $\ell < 2t$.

\begin{lemma}
  \label{lem:trapping}
  Let $\mathcal{C}$ be a collection of circuits of a matroid~$M$ such that, for some $J \subseteq E(M)$ with $|J|\leq k$, we have $C\cap C'=J$, for all distinct $C,C'\in\mathcal{C}$.
  Then, for every subcollection $\{C_1,\dotsc,C_{2^k}\}\subseteq\mathcal{C}$ of size $2^k$, there is a circuit contained in $\bigcup_{i=1}^{2^k}C_i-J$.
\end{lemma}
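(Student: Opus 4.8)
The plan is to induct on $k$, using circuit elimination as the engine. The base case $k=0$ is immediate: then $J=\emptyset$ and any two distinct members of $\mathcal{C}$ are disjoint, so already $C_1$ itself is a circuit contained in $C_1 - J = C_1$. For the inductive step, suppose the claim holds for all values less than $k$, and fix distinct $C_1,\dotsc,C_{2^k} \in \mathcal{C}$ with $C_i \cap C_j = J$ for $i \neq j$ and $|J| \le k$. If in fact $|J| \le k-1$, we are done by the inductive hypothesis (using only the first $2^{k-1}$ of the $C_i$, noting $2^{k-1} \le 2^k$). So we may assume $|J| = k \ge 1$; fix an element $x \in J$.

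The key step is to pair up the circuits and eliminate $x$ from each pair. For $i = 1,\dotsc,2^{k-1}$, apply strong circuit elimination to $C_{2i-1}$ and $C_{2i}$ with respect to $x$: since $x \in C_{2i-1} \cap C_{2i}$ and these circuits are distinct, there is a circuit $D_i$ with
\[
  D_i \subseteq (C_{2i-1} \cup C_{2i}) - \{x\}.
\]
I claim the $D_i$ behave like a collection of circuits meeting pairwise in a set of size at most $k-1$. Indeed, for $i \neq j$ we have $D_i \cap D_j \subseteq (C_{2i-1}\cup C_{2i}) \cap (C_{2j-1}\cup C_{2j})$, and every term in the expansion of this intersection is of the form $C_a \cap C_b$ with $a \in \{2i-1,2i\}$, $b \in \{2j-1,2j\}$, hence $a \neq b$, so each such term equals $J$. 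Therefore $D_i \cap D_j \subseteq J - \{x\}$, a set of size $k-1$. The one subtlety is that the $D_i$ need not be distinct and their pairwise intersections need not all be equal — they are merely contained in $J - \{x\}$; I will need to argue this still suffices, either by passing to a subcollection on which the intersections are constant (shrinking $J-\{x\}$ further, which only helps), or by observing that the proof of the inductive hypothesis only ever uses containment of the pairwise intersections in a fixed $(k-1)$-set. I expect the cleanest fix is to state and prove the lemma with "$C \cap C' \subseteq J$" in place of "$C \cap C' = J$", which is formally weaker as a hypothesis and makes the induction go through verbatim; this is harmless since the stated version is a special case.

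Granting that, apply the inductive hypothesis to $D_1,\dotsc,D_{2^{k-1}}$ with the $(k-1)$-set $J - \{x\}$: there is a circuit $D$ with
\[
  D \subseteq \Bigl(\bigcup_{i=1}^{2^{k-1}} D_i\Bigr) - (J - \{x\}) \subseteq \Bigl(\bigcup_{i=1}^{2^k} C_i\Bigr) - J,
\]
where the last containment holds because each $D_i \subseteq \bigcup C_i$ and $D_i$ avoids $x$, while $D$ avoids all of $J - \{x\}$, so $D$ avoids all of $J$. This is exactly the desired conclusion. The main obstacle, as flagged, is the bookkeeping around distinctness and equality of the intersections of the eliminated circuits $D_i$; reformulating the lemma's hypothesis as a containment sidesteps it entirely, and I would write the proof that way, remarking that it implies the stated form.
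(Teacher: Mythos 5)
Your decomposition-and-recurse strategy is essentially the same as the paper's proof: the paper builds a binary tree $Z_{i,j}$ of unions of the $C_i$'s and peels off one element of $J$ at each level by strong circuit elimination, which is exactly your "pair up, eliminate $x$, recurse" rephrased as an inner induction on $j$. So the mechanics are right. However, there is a genuine gap in your handling of the distinctness issue, and the proposed fix does not close it.

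The problem is precisely the one you flag: if $D_i = D_j$ for some $i \neq j$, you have fewer than $2^{k-1}$ circuits and the inductive hypothesis cannot be invoked. The observation that repairs this is that $J$ is \emph{independent}. Under the original hypothesis $C \cap C' = J$, and given that $k \geq 1$ (so $2^k \geq 2$ and the subcollection contains two distinct circuits $C,C'$), $J$ is the intersection of two distinct circuits, hence a proper subset of each and therefore independent. Then $D_i \cap D_j \subseteq J - \{x\}$, and if $D_i = D_j$ you would have a circuit contained in the independent set $J$, a contradiction; so the $D_i$ are automatically distinct. This is exactly the parenthetical remark in the paper's proof ("which is independent since $J$ is the intersection of at least two circuits"). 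Your proposed fix --- restating the lemma with "$C\cap C'\subseteq J$" in place of "$C\cap C'=J$" --- actually \emph{destroys} this argument: under the weaker hypothesis, $J$ is no longer forced to be independent, and you have no way to rule out $D_i=D_j$. The weakened hypothesis also breaks your appeal to strong circuit elimination, which silently assumed $x\in C_{2i-1}\cap C_{2i}$; under "$\subseteq$" one of the two circuits may already avoid $x$ (that case is trivially handled by taking $D_i$ to be that circuit, but you did not say so).

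Two clean repairs, either of which yields a complete proof: (1) keep the "$\subseteq$" restatement but \emph{add the hypothesis that $J$ is independent}, noting that this is automatic in the original lemma whenever the conclusion is non-vacuous; then distinctness of the $D_i$ follows as above, and the inductive hypothesis (with $J-\{x\}$, still independent) applies verbatim; or (2) do what the paper does and run an inline induction with the \emph{original} $J$ fixed throughout, proving the statement "each $Z_{i,j}$ contains a circuit disjoint from $\{x_1,\dots,x_j\}$" for $j=0,1,\dots,|J|$, never re-invoking the lemma itself as a black box and hence never needing the intersections to be uniform. Your hand-waving about "passing to a subcollection on which the intersections are constant" does not work as stated, because such a subcollection could be too small to supply $2^{k-1}$ circuits.
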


\begin{proof}
  We may assume $|\mathcal{C}|\geq2^k$; otherwise, the result holds vacuously. Also, we may assume $k>0$ as the result holds for any singleton subcollection of $\mathcal{C}$ with $J=\emptyset$. Therefore, $\mathcal{C}$ has at least one subcollection $\mathcal{C}' = \{C_1,\ldots C_{2^k}\}$, with $|\mathcal{C}'|= 2^k \geq2$.

  Let $x_1,x_2,\ldots, x_{|J|}$ be the elements of $J$.
  Define $Z_{i,0}=C_i$, for $i \in [2^k]$, and recursively define $Z_{i,j}=Z_{2i-1,j-1}\cup Z_{2i,j-1}$ for 
  $j \in [k]$ and $i \in [2^{k-j}]$.
  Note that each $Z_{i,j}$ is the union of $2^j$ members of $\mathcal{C}$.
  We will show, by induction on $j$, that $Z_{i,j}-\{x_1,x_2,\ldots,x_j\}$ contains a circuit.
  This is clear when $j=0$.
  Now let $j \ge 1$.
  By the induction hypothesis, $Z_{2i-1,j-1}$ and $Z_{2i,j-1}$ each contain a circuit, $C'_1$ and $C'_2$ respectively, disjoint from $\{x_1,x_2,\ldots,x_{j-1}\}$, for each $i \in [2^{k-j}]$.
  (Moreover, $C'_1\neq C'_2$ since $C'_1\cap C'_2\subseteq Z_{2i-1,j-1}\cap Z_{2i,j-1}\subseteq J$, which is independent since $J$ is the intersection of at least two 
  circuits.) We may assume that neither $Z_{2i-1,j-1}$ nor $Z_{2i,j-1}$ contains a circuit disjoint from $\{x_1,x_2,\ldots,x_j\}$; otherwise, so does $Z_{i,j}$.
  Thus, $C'_1$ and $C'_2$ both contain $x_j$.
  By circuit elimination, there is a circuit~$C'_3$ contained in $(C'_1\cup C'_2)-\{x_j\}\subseteq Z_{i,j}-\{x_1,x_2,\ldots,x_j\}$.
  This completes the induction argument.
  In particular, there is a circuit contained in $Z_{1,k}-\{x_1,x_2,\ldots, x_{|J|}\}=\bigcup_{i=1}^{2^k}C_i-J$, as required.
\end{proof}

\begin{lemma}
  \label{lem:disjoint2}
  There exists a function $g: \mathbb{N} \times \mathbb{N} \rightarrow \mathbb{N}$ such that if $M$ has at least $g(\ell,d)$-many $\ell$-element circuits, then $M$ has a collection of $d$ pairwise disjoint circuits.
\end{lemma}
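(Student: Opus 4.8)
The plan is to combine \cref{lem:Ramsey} and \cref{lem:trapping} to extract a large ``sunflower'' of $\ell$-circuits and then repeatedly pull off genuinely disjoint circuits from it. First I would apply \cref{lem:Ramsey} with $s = \ell$: if $M$ has at least $f(\ell, n)$-many $\ell$-circuits (for $n$ to be chosen), then there is a subcollection $\mathcal{C}'$ of $n$ of them, together with a common intersection $J$ with $0 \le |J| < \ell$, so that $C \cap C' = J$ for all distinct $C, C' \in \mathcal{C}'$. This is a sunflower with core $J$ and $n$ petals. If $J = \emptyset$ we are already done as soon as $n \ge d$, so the interesting case is $1 \le |J| = k < \ell$.

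Now \cref{lem:trapping} applies to $\mathcal{C}'$: any $2^{k}$ of the petals have a circuit contained in the union of their petals minus the core $J$. Such a circuit is disjoint from $J$, hence disjoint from every petal-minus-core of the petals \emph{not} used, and in particular from any circuit we subsequently extract from a disjoint block of $2^{k}$ other petals. So the strategy is: partition the $n$ petals into $d$ blocks of size $2^{k}$ (possible as soon as $n \ge d \cdot 2^{k} \ge d \cdot 2^{\ell-1}$), and apply \cref{lem:trapping} once to each block to obtain $d$ circuits $C_1^{*}, \dots, C_d^{*}$ with $C_i^{*}$ contained in the union of the $i$-th block's petals minus $J$. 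Since distinct blocks use disjoint petals and each $C_i^{*}$ avoids $J$, the sets $C_i^{*}$ are pairwise disjoint, giving the required $d$ pairwise disjoint circuits.

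Putting the bookkeeping together, it suffices to take $g(\ell, d) = f(\ell,\, d \cdot 2^{\ell-1})$ (using monotonicity of the Ramsey function $f$, and noting $d \cdot 2^{\ell - 1} \ge d$ so the $J = \emptyset$ case is also covered). I would state this choice explicitly and then verify the two cases ($J = \emptyset$ and $J \ne \emptyset$) as above.

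The only real obstacle is making sure the disjointness of the extracted circuits is airtight: a priori a circuit produced by \cref{lem:trapping} from one block could still meet a petal of another block outside its core. The point that rescues this is that \cref{lem:trapping} delivers a circuit inside $\bigcup_{i} C_i - J$ where the $C_i$ range over one block only, so it lies in $\bigcup_{\text{block}} (C_i \setminus J)$, and for petals $C, C'$ in different blocks we have $(C \setminus J) \cap (C' \setminus J) \subseteq (C \cap C') \setminus J = J \setminus J = \emptyset$. Hence the per-block circuits live in pairwise disjoint ground sets, and disjointness is immediate. Everything else is routine.
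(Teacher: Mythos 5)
Your proposal is correct and follows essentially the same route as the paper's proof: apply \cref{lem:Ramsey} to obtain a sunflower of $\ell$-circuits with core $J$, handle $J=\emptyset$ trivially, otherwise partition the petals into $d$ disjoint blocks and apply \cref{lem:trapping} to each block to extract pairwise disjoint circuits avoiding $J$, yielding $g(\ell,d)=f(\ell, 2^{\ell-1}d)$. (One small typo: you wrote ``$n \ge d\cdot 2^{k} \ge d\cdot 2^{\ell-1}$'' but the inequality runs the other way, $2^{k}\le 2^{\ell-1}$ since $k=|J|\le \ell-1$; the intended conclusion is unaffected.)
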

\begin{proof}
  Let $\mathcal{C}$ be the collection of $\ell$-element circuits of $M$, let $f$ be the function of \cref{lem:Ramsey}, and let $g(\ell,d) = f(\ell,2^{\ell-1}d)$.
  Then, by \cref{lem:Ramsey}, there is a subset $\mathcal{C}'\subseteq\mathcal{C}$ with $|\mathcal{C}'|=2^{\ell-1}d$, and a set $J$, with $0\leq|J|\le \ell-1$, such that $C\cap C'=J$ for every pair $C,C'\in\mathcal{C}'$.
  Say $\mathcal{C}'=\{C_1,C_2,\ldots,C_{2^{\ell-1}d}\}$.

  If $J=\emptyset$, then $M$ has $2^{\ell-1}d \ge d$ pairwise disjoint circuits, as required.
  Thus, we may assume that $J\neq\emptyset$.
  For each $C_i\in\mathcal{C}'$, let $D_i=C_i-J$, and observe that the $D_i$'s are pairwise disjoint.
  For $j \in [d]$, let \[D_j'=\bigcup \limits_{i=1}^{2^{\ell-1}} D_{(j-1)(2^{\ell-1})+i}.\]
  By \cref{lem:trapping}, each $D_j'$ contains a circuit~$C_j'$, and the $C_j'$'s are pairwise disjoint.
\end{proof}

\begin{theorem}
\label{thm:2tstrong}
  Let $t_1$, $\ell_1$, $t_2$, and $\ell_2$ be positive integers.
  If $\ell_1 < 2t_1$, then there is a finite number of matroids with the $(t_1,\ell_1,t_2,\ell_2)$-property.
  By duality, the same is true if $\ell_2 < 2t_2$.
\end{theorem}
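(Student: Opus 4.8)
The plan is to show that if $M$ has the $(t_1,\ell_1,t_2,\ell_2)$-property with $\ell_1 < 2t_1$ and $|E(M)|$ is large, then $M$ contains two disjoint $\ell_1$-element circuits $C_1, C_2$, and to derive a contradiction from the $(t_1,\ell_1,\dots)$-property applied to a carefully chosen $t_1$-set. First I would observe that, since every $t_1$-set is contained in an $\ell_1$-circuit, the number of $\ell_1$-circuits of $M$ grows without bound as $|E(M)| \to \infty$: indeed, each $\ell_1$-circuit contains at most $\binom{\ell_1}{t_1}$ many $t_1$-sets, so $M$ has at least $\binom{|E(M)|}{t_1} / \binom{\ell_1}{t_1}$ distinct $\ell_1$-circuits. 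Hence, for $|E(M)|$ larger than some threshold depending only on $\ell_1$ and $t_1$, \cref{lem:disjoint2} yields a collection of $d$ pairwise disjoint $\ell_1$-element circuits, for any prescribed $d$; I will take $d$ to be a small constant, in fact $d = 2$ suffices, though it is cleaner to allow a slightly larger $d$ depending on $\ell_1$.

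Now here is the key combinatorial point. Let $C_1$ and $C_2$ be two disjoint $\ell_1$-circuits. Since $\ell_1 \le 2t_1 - 1$, we have $t_1 \ge \lceil (\ell_1+1)/2 \rceil$, so we can choose a $t_1$-set $X$ that uses at least $\lceil \ell_1/2 \rceil$ elements from $C_1$ and at least $\lceil \ell_1/2 \rceil$ elements from $C_2$; more precisely, since $2t_1 > \ell_1$, we may pick $X$ with $|X \cap C_1| \ge t_1 - \lfloor \ell_1/2 \rfloor \cdot 0$... the clean version is: pick $X \subseteq C_1 \cup C_2$ with $|X| = t_1$, $|X \cap C_1| \ge \ell_1 - t_1 + 1$ and $|X \cap C_2| \ge \ell_1 - t_1 + 1$; this is possible because $(\ell_1 - t_1 + 1) + (\ell_1 - t_1 + 1) = 2\ell_1 - 2t_1 + 2 \le \ell_1 + 1$ (using $\ell_1 \le 2t_1 - 1$), hmm, that forces $\ell_1 \le 2t_1 - 1$ giving $2\ell_1 - 2t_1 + 2 \le \ell_1 + 1 \le \ell_1 + 1$, so indeed such an $X$ exists inside $C_1 \cup C_2$ with room to spare, and $X \not\subseteq C_1$, $X \not\subseteq C_2$. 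By the $(t_1,\ell_1,\dots)$-property, $X$ lies in some $\ell_1$-circuit $C$. Then $C \ne C_1$ and $C \ne C_2$ (since $X$ is not contained in either), and $|C \cap C_1| \ge |X \cap C_1| \ge \ell_1 - t_1 + 1$, so $|C \setminus C_1| \le t_1 - 1 < \ell_1 - (\ell_1 - t_1 + 1) = t_1 - 1$... I need to set the inequalities so that $C \cap C_1$ and $C \cap C_2$ are each so large that $C$ cannot be accommodated; the cleanest contradiction is via circuit axioms: $C$, $C_1$, $C_2$ are three distinct circuits, $C \cap C_1 \ne \emptyset$ and $C \cap C_2 \ne \emptyset$, yet $C_1 \cap C_2 = \emptyset$, and the sizes are constrained enough that repeated circuit elimination traps a circuit inside $(C \cup C_1 \cup C_2) \setminus X$ which has fewer than... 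Actually the slickest route is: from $|C \cap C_1| \ge \ell_1 - t_1 + 1$ and $|C| = \ell_1$ we get $|C \setminus C_1| \le t_1 - 1$; symmetrically $|C \setminus C_2| \le t_1 - 1$; but $C \setminus C_1 \supseteq C \cap C_2$ and $C\setminus C_2 \supseteq C \cap C_1$, so $|C| = |C \cap C_1| + |C \setminus C_1| \le (t_1 - 1) + (t_1 - 1) = 2t_1 - 2 < \ell_1$, contradiction once we check $\ell_1 \ge 2t_1 - 1$ makes this sharp — so I need to choose $X$ even more aggressively, forcing $|C \cap C_1|, |C \cap C_2| \ge t_1$, i.e., $|X \cap C_1|, |X \cap C_2| \ge $ enough; since $\ell_1 \le 2t_1 - 1$, set $|X \cap C_1| = |X \cap C_2|$ as equal as possible summing to $t_1$, giving each at least $\lceil t_1/2 \rceil$, which is not obviously enough, so in fact I will argue directly via \cref{lem:trapping}-style elimination between $C, C_1, C_2$.

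The cleanest correct argument: take a larger disjoint family. By \cref{lem:disjoint2}, for $|E(M)|$ large we get $d$ pairwise disjoint $\ell_1$-circuits $C_1, \dots, C_d$ where $d$ is chosen so that $d \cdot \lceil \ell_1/2 \rceil \ge$ something; actually take $d = t_1$ and pick $X$ hitting as many $C_i$ as possible. Hmm — the genuinely robust approach, and the one I expect the authors use, is: let $X$ be any $t_1$-subset of $C_1 \cup C_2$ that intersects both; by the property, $X$ is in an $\ell_1$-circuit $C$. Since $C_1, C_2, C$ are circuits with $C_1 \cap C_2 = \emptyset$, orthogonality-free circuit manipulation (Lemma~\ref{lem:trapping} applied to $\{C_1, C\}$ and to $\{C_2, C\}$, or direct circuit elimination) produces circuits inside $(C \cup C_1) \setminus X$ and $(C \cup C_2)\setminus X$; iterating and counting shows that since $\ell_1 < 2t_1$, the removed set $X$ is too large relative to $\ell_1$ for any $\ell_1$-circuit to both contain $X$ and fit inside $C_1 \cup C_2$ — a contradiction. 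Therefore no two disjoint $\ell_1$-circuits exist, so by the contrapositive of \cref{lem:disjoint2}, $M$ has at most $g(\ell_1, 2) - 1$ circuits of size $\ell_1$, hence (by the first paragraph's counting) $\binom{|E(M)|}{t_1} \le \binom{\ell_1}{t_1}\bigl(g(\ell_1,2) - 1\bigr)$, bounding $|E(M)|$ in terms of $t_1$ and $\ell_1$ only; since $M$ has finitely many ground-set sizes and, for each fixed small ground set, finitely many matroids, the conclusion follows. The $\ell_2 < 2t_2$ case is immediate by matroid duality. \textbf{Main obstacle:} making the circuit-elimination step fully rigorous — pinning down exactly why $\ell_1 < 2t_1$ (as opposed to $\ell_1 \le 2t_1$) forces the contradiction, i.e., showing that a $t_1$-set straddling two disjoint $\ell_1$-circuits cannot be swallowed by a third; I expect this needs the inequality $2(\ell_1 - t_1) < \ell_1$, equivalently $\ell_1 < 2t_1$, arranged so that $|X \cap C_1| + |X \cap C_2| = t_1 > \ell_1 - t_1 \ge \max(|C \setminus C_1|, |C \setminus C_2|)$ type bookkeeping goes through.
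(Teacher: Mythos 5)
Your approach has a fundamental gap: you never use the cocircuit half of the hypothesis, and the argument you sketch cannot be completed because the statement you are implicitly trying to prove is false. You want to derive a contradiction from the existence of two disjoint $\ell_1$-element circuits using only the $(t_1,\ell_1)$-circuit condition (that every $t_1$-set lies in an $\ell_1$-circuit) and circuit elimination. But the paper itself notes, immediately after this theorem, that projective geometries form an infinite family in which every pair of elements lies in a $3$-element circuit, i.e.\ the $(t_1,\ell_1)=(2,3)$ circuit condition holds with $\ell_1<2t_1$, and disjoint triangles abound. Concretely, in $PG(n,2)$ take disjoint triangles $C_1=\{a,b,c\}$ and $C_2=\{d,e,f\}$ and $X=\{a,d\}$; then $X$ lies in the triangle $\{a,d,a+d\}$ and nothing contradictory happens. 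The ``main obstacle'' you identify at the end is not a technicality to be patched — the route is blocked. This is also reflected in the increasingly desperate bookkeeping in your second paragraph, none of which settles on an inequality that actually closes.

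The missing ingredient is orthogonality with a \emph{cocircuit}, which is where $t_2,\ell_2$ must enter. The paper proves the $\ell_2<2t_2$ case (your case then follows by duality): if $|E(M)|$ is large, the $(t_1,\ell_1)$ condition gives many $\ell_1$-circuits, so by \cref{lem:disjoint2} there are $t_2$ pairwise disjoint circuits $C_1,\dotsc,C_{t_2}$; choosing one element $b_i\in C_i$ and taking the $\ell_2$-element cocircuit $C^*\supseteq\{b_1,\dotsc,b_{t_2}\}$, orthogonality forces $|C^*\cap C_i|\ge 2$ for every $i$, so $\ell_2=|C^*|\ge 2t_2$, a contradiction. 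Your opening steps — counting $\ell_1$-circuits to invoke \cref{lem:disjoint2} and get a disjoint family — do match the paper, but after that you need to hit those disjoint circuits with a single small cocircuit, not with another circuit. (If you insist on working the $\ell_1<2t_1$ case directly rather than by duality, the dual version is: get $t_1$ pairwise disjoint $\ell_2$-cocircuits, pick one element from each, and let the $\ell_1$-circuit through those elements meet each cocircuit in at least two points.)
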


\sloppy
\begin{proof}
  It suffices to prove the result when $\ell_2 < 2t_2$.
  So let $\ell_2 < 2t_2$, and let $g$ be the function given in \cref{lem:disjoint2}.

  Suppose $M$ has at least $g(\ell_1,t_2)$-many $\ell_1$-element circuits. By \cref{lem:disjoint2}, $M$ has a collection of $t_2$ pairwise disjoint circuits. Call this collection $\mathcal{C} = \{C_1,\ldots,C_{t_2}\}$. 
  Let $b_i$ be an element of $C_i$, for each $i \in [t_2]$.
  By the $(t_1,\ell_1,t_2,\ell_2)$-property, there is an $\ell_2$-element cocircuit~$C^*$ containing $\{b_1,\ldots,b_{t_2}\}$.
  By orthogonality, for each $i \in [t_2]$ there is an element $b'_i\neq b_i$ such that $b'_i\in C_i\cap C^*$.
  This implies that $\ell_2=|C^*|\geq 2t_2$; a contradiction. Thus, 
  $M$ has fewer than $g(\ell_1,t_2)$-many $\ell_1$-element circuits.

  Suppose $|E(M)|\geq \ell_1\cdot g(\ell_1,t_2)$.
  Partition a subset of $E(M)$ into $\lfloor \ell_1/t_1\rfloor \cdot g(\ell_1,t_2)$ pairwise disjoint $t_1$-sets.
  By the $(t_1,\ell_1,t_2,\ell_2)$-property, each of these $t_1$-sets is contained in an $\ell_1$-element circuit.
  The collection consisting of these $\ell_1$-element circuits contains at least $g(\ell_1,t_2)$ distinct circuits.
  This contradicts the fact that $M$ has fewer than $g(\ell_1,t_2)$-many $\ell_1$-element circuits.
  Therefore, $|E(M)| < \ell_1\cdot g(\ell_1,t_2)$.
  The result follows.
\end{proof}
\fussy

Note that there may still be infinitely many matroids where every $t_1$-element set is in an $\ell_1$-element circuit for fixed $\ell_1 < 2t_1$; 
it is necessary that the matroids in \cref{thm:2tstrong} have the property that every $t_2$-element set is in an $\ell_2$-element cocircuit, for fixed $t_2$ and $\ell_2$.
To see this, observe that projective geometries on at least three elements form an infinite family of matroids with the property that every pair of elements is in a $3$-element circuit.

\begin{corollary}
  \label{thm:2t}
  Let $t$ and $\ell$ be positive integers.
  When $\ell<2t$, there is a finite number of matroids with the $(t,\ell)$-property.
\end{corollary}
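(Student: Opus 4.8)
The plan is straightforward: \cref{thm:2t} is the special case of \cref{thm:2tstrong} obtained by setting $t_1 = t_2 = t$ and $\ell_1 = \ell_2 = \ell$. So the proof reduces to checking that the hypothesis $\ell < 2t$ gives us what we need to invoke \cref{thm:2tstrong}.

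Concretely, suppose $M$ has the $(t,\ell)$-property with $\ell < 2t$. By definition, every $t$-element subset of $E(M)$ is contained in both an $\ell$-element circuit and an $\ell$-element cocircuit; in particular, $M$ has the $(t_1,\ell_1,t_2,\ell_2)$-property with $(t_1,\ell_1,t_2,\ell_2) = (t,\ell,t,\ell)$. Since $\ell_1 = \ell < 2t = 2t_1$, \cref{thm:2tstrong} applies and tells us there is a finite number of matroids with the $(t,\ell,t,\ell)$-property. Every matroid with the $(t,\ell)$-property is such a matroid, so there are only finitely many of them.

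There is essentially no obstacle here — the only thing to be careful about is matching the definitions: one must note that the $(t,\ell)$-property, as defined in the introduction, is exactly the $(t,\ell,t,\ell)$-property in the notation of \cref{sec:smallL}. (Indeed, the section's opening paragraph already remarks that \cref{thm:2t} is ``a special case'' of the finiteness result, so this corollary is just recording that observation formally.) I would write the proof in one or two sentences accordingly.

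\begin{proof}
  A matroid has the $(t,\ell)$-property if and only if it has the $(t,\ell,t,\ell)$-property. Since $\ell < 2t$, \cref{thm:2tstrong} implies that there is a finite number of matroids with the $(t,\ell,t,\ell)$-property, and hence with the $(t,\ell)$-property.
\end{proof}
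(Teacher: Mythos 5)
Your proof is correct and matches the paper's (implicit) reasoning: the corollary is simply the specialisation $t_1=t_2=t$, $\ell_1=\ell_2=\ell$ of \cref{thm:2tstrong}, which is why the paper states it without proof. Nothing further is needed.
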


\section{Echidnas and $t$-spikes}
\label{sec:echidnas}

We now focus on matroids with the $(t,2t)$-property.
In \cref{sec:t2t}, we will show that every sufficiently large matroid with the $(t,2t)$-property has a partition into pairs such that the union of any $t$ of these pairs is both a circuit and a cocircuit.
We call such a matroid a $t$-spike.
We first define a related structure: a $t$-echidna.

\begin{definition}
  \label{def:echidna}
  Let $M$ be a matroid.
  A $t$-\emph{echidna} of order $n$ is a partition $(S_1,\ldots, S_n)$ of a subset of $E(M)$ such that 
  \begin{enumerate}
    \item $|S_i|=2$ for all $i \in \seq{n}$, and 
    \item $\bigcup_{i \in I}S_i$ is a circuit for all $I \subseteq \seq{n}$ with $|I|=t$.
  \end{enumerate}
  For $i \in \seq{n}$, we say $S_i$ is a \emph{spine}.
  We say $(S_1,\ldots,S_n)$ is a \emph{$t$-coechidna} of $M$ if $(S_1,\ldots,S_n)$ is a $t$-echidna of $M^*$.
\end{definition}

\begin{definition}
  \label{def:tspike}
  A matroid~$M$ is a \emph{$t$-spike} of order~$r$ if there exists a partition $\pi=(A_1,\ldots,A_r)$ of $E(M)$ such that $\pi$ is a $t$-echidna and a $t$-coechidna, for some $r \ge t$.
  We say $\pi$ is the \emph{associated partition} of the $t$-spike~$M$, and $A_i$ is an \emph{arm} of the $t$-spike for each $i \in \seq{r}$.
\end{definition}

\noindent
Note that if $M$ is a $t$-spike, then $M^*$ is a $t$-spike.

\medskip

In this section, we prove, as \cref{lem:swamping}, that if $M$ is a matroid with the $(t,2t)$-property, and $M$ has a $t$-echidna of order $4t-3$, then $M$ is a $t$-spike.

\begin{lemma}
  \label{lem:coechidna}
  Let $M$ be a matroid with the $(t,2t)$-property.
  If $M$ has a $t$-echidna $(S_1,\ldots, S_n)$, where $n\geq3t-1$, then $(S_1,\ldots, S_n)$ is also a $t$-coechidna of $M$.
\end{lemma}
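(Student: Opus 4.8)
The plan is to show that for any $t$-subset $I \subseteq \seq{n}$, the set $\bigcup_{i \in I} S_i$ is a cocircuit. Fix such an $I$, write $X = \bigcup_{i\in I} S_i$, and pick one element $x_i$ from each spine $S_i$ with $i \in I$, so $\{x_i : i \in I\}$ is a $t$-set. By the $(t,2t)$-property there is a $2t$-element cocircuit $C^*$ containing $\{x_i : i \in I\}$. The first step is to argue that $C^* = X$. Orthogonality is the tool: for each $i \in I$, the spine $S_i$ together with any $t-1$ other spines from $\seq{n}\setminus\{i\}$ forms a $t$-element circuit (here we use $n \ge t$, which follows from $n \ge 3t-1$), and this circuit meets $C^*$ in the element $x_i$, so it must meet $C^*$ in a second element. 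By choosing the other $t-1$ spines cleverly — in particular choosing them from $\seq{n} \setminus I$, which is possible since $n \ge 3t-1 \ge 2t$ gives $|\seq{n}\setminus I| \ge 2t - t = t \ge t-1$ — we can pin down where that second element lies. Specifically, if $C^*$ contained an element $y$ outside $X$, say $y \in S_j$ for some $j \notin I$ (or $y$ in the unpartitioned part, handled similarly), we build a $t$-element circuit through $x_i$ avoiding $y$ to derive a contradiction, or conversely force $|C^* \cap X| = 2t = |X|$.

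More carefully, here is the structure I would use. First show $C^* \subseteq X$: suppose not, so $C^*$ meets some spine $S_j$ with $j \notin I$, or meets the complement of $\bigcup S_i$. Using orthogonality against the $t$-circuits, each spine $S_i$ with $i \in I$ must have both its elements in $C^*$ — otherwise pair $S_i$ with $t-1$ spines from $\seq{n}\setminus I$ to get a circuit meeting $C^*$ in exactly $x_i$. Since $|C^*| = 2t$ and the $t$ spines $S_i$, $i\in I$, already contribute $2t$ elements once we know both elements of each are in $C^*$, we get $C^* = X$. The point where $n \ge 3t-1$ is genuinely needed is in guaranteeing enough "spare" spines outside $I$ to form these auxiliary circuits while also having room to maneuver when $C^*$ might hit several spines outside $I$ at once; a cocircuit of size $2t$ can meet at most $2t$ spines, so with $n \ge 3t-1$ there is always a spine entirely disjoint from $C^*$, and more importantly enough spines in $\seq{n}\setminus I$ disjoint from $C^*$ to serve as fillers.

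The main obstacle I anticipate is the bookkeeping in the orthogonality argument: when $C^*$ is assumed to contain a stray element $y$, one must exhibit a concrete $t$-element circuit (a union of $t$ spines) that contains some $x_i \in C^*$ but meets $C^*$ in no other element, and this requires the auxiliary spines to be disjoint from $C^*$. Counting: $C^*$ has $2t$ elements, hence meets at most $2t$ spines; of these at most $t$ are the spines $S_i$ with $i\in I$; so $C^*$ meets at most $t$ spines among $\seq{n}\setminus I$; since $|\seq{n}\setminus I| \ge (3t-1) - t = 2t-1 \ge t-1$, there remain at least $(2t-1) - t = t-1$ spines in $\seq{n}\setminus I$ untouched by $C^*$, exactly enough to complete $S_i$ to a $t$-element circuit avoiding all of $C^*$ except $x_i$, contradicting orthogonality with the cocircuit $C^*$. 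Once $C^* = X$ is established for every $t$-subset $I$, condition (ii) for a $t$-coechidna holds, and condition (i) is inherited from the $t$-echidna, completing the proof.
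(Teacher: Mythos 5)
Your proposal is correct and takes essentially the same approach as the paper: fix a transversal of the $t$ spines, use the $(t,2t)$-property to obtain a $2t$-element cocircuit through it, and then invoke orthogonality with circuits built from $t$ spines (using $n \ge 3t-1$ at the same point, to guarantee enough spines disjoint from the cocircuit) to force the cocircuit to equal the union of the $t$ spines. One small wording issue: you repeatedly write ``$t$-element circuit'' for a circuit that is a union of $t$ spines --- such a circuit has $2t$ elements; your parenthetical ``(a union of $t$ spines)'' shows you know this, but the slip should be corrected. The only stylistic difference from the paper is at the end: the paper derives a contradiction by showing $C^*$ would have to meet all but at most $t-2$ spines and hence exceed size $2t$, while you directly exhibit a circuit meeting $C^*$ in a single element; these are two phrasings of the same counting argument.
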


\begin{proof}
  Let $S_i=\{x_i,y_i\}$ for each $i \in [n]$.
  By definition, if $J$ is a $t$-element subset of $[n]$, then $\bigcup_{j \in J} S_j$ is a circuit.
  Consider such a circuit~$C$; without loss of generality, we let $C=\{x_1,y_1,\ldots,x_t,y_t\}$.
  By the $(t,2t)$-property, there is a $2t$-element cocircuit~$C^*$ that contains $\{x_1,\ldots,x_t\}$.

  Suppose that $C^* \neq C$.
  Then there is some $i \in [t]$ such that $y_i\notin C^*$.
  Without loss of generality, say $y_1\notin C^*$.
  Let $I$ be a $(t-1)$-element subset of $[t+1,n]$.
  For any such $I$, the set $S_1 \cup (\bigcup_{i \in I} S_i)$ is a circuit that meets $C^*$.
  By orthogonality, $\bigcup_{i \in I} S_i$ meets $C^*$ for every $(t-1)$-element subset~$I$ of $[t+1,n]$.
  Thus, $C^*$ avoids at most $t-2$ of the $S_i$'s for $i \in [t+1,n]$.
  In fact, as $C^*$ meets each $S_i$ with $i \in [t]$, the cocircuit~$C^*$ avoids at most $t-2$ of the $S_i$'s with $i \in [n]$.
  Thus $|C^*| \ge n-(t-2) \ge (3t-1) -(t-2) =2t+1 > 2t$; a contradiction.
  Therefore, we conclude that $C^*=C$, and the result follows.
\end{proof}

\sloppy
\begin{lemma}
  \label{lem:rep-orthog}
  Let $M$ be a matroid with the $(t,2t)$-property, and let $(S_1,\ldots, S_n)$ be a $t$-echidna of $M$ with $n\geq3t-1$.
  Let $I$ be a $(t-1)$-element subset of $[n]$.
  For $z\in E(M)-\bigcup_{i \in I}S_i$, there is a $2t$-element circuit and a $2t$-element cocircuit each containing $\{z\} \cup (\bigcup_{i \in I}S_i)$.
\end{lemma}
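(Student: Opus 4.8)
The plan is to apply the $(t,2t)$-property directly to the $t$-set $\{z\}\cup\{x_i : i\in I\}$, where I write $S_i=\{x_i,y_i\}$ for each $i\in\seq{n}$. This produces a $2t$-element circuit $C$ and a $2t$-element cocircuit $C^*$, each containing $\{z\}\cup\{x_i : i\in I\}$. Since $|\{z\}\cup\bigcup_{i\in I}S_i| = 2t-1$, it then suffices to show that $C$ contains $y_i$ for every $i\in I$, and likewise for $C^*$; equality of sizes will then force $\{z\}\cup\bigcup_{i\in I}S_i\subseteq C$ and $\{z\}\cup\bigcup_{i\in I}S_i\subseteq C^*$, as required.

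To show $y_i\in C$, first I would bound how many spines $C$ can meet. Writing $U=\bigcup_{i\in I}S_i$, we have $|C\cap U|\ge t-1$, since $C$ contains each $x_i$, so $|C\setminus U|\le t+1$; as the spines $S_k$ with $k\notin I$ are pairwise disjoint and disjoint from $U$, the circuit $C$ meets at most $t+1$ of them. There are $n-(t-1)\ge 2t$ such spines, using $n\ge 3t-1$, so at least $t-1$ of them are disjoint from $C$. Now fix $i\in I$ and suppose for contradiction that $y_i\notin C$. Choose a $(t-1)$-element set $K$ of indices $k\notin I$ with $S_k\cap C=\emptyset$, and set $J=\{i\}\cup K$, so $|J|=t$. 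By \cref{lem:coechidna}, $(S_1,\ldots,S_n)$ is a $t$-coechidna of $M$, so $D^*:=\bigcup_{j\in J}S_j$ is a cocircuit. Then $C\cap D^* = C\cap S_i = \{x_i\}$, contradicting orthogonality. Hence $y_i\in C$ for all $i\in I$, which gives the desired $2t$-element circuit.

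The cocircuit statement follows by the dual argument: the same counting applied to $C^*$ in place of $C$ shows that $C^*$ is disjoint from at least $t-1$ of the spines $S_k$ with $k\notin I$, and for any $i\in I$ with $y_i\notin C^*$ one takes a $t$-element set $J\ni i$ with $\bigcup_{j\in J}S_j$ a circuit (directly from the $t$-echidna hypothesis) meeting $C^*$ in exactly $\{x_i\}$, again contradicting orthogonality. I expect the only real content here is the counting bound guaranteeing enough spines disjoint from $C$ (respectively $C^*$), which is precisely where $n\ge 3t-1$ is used; everything else is a routine application of orthogonality. One small point to watch is that $z$ may itself lie in some spine $S_k$ with $k\notin I$, but this is harmless, since such an element lies in $C\setminus U$ and is already accounted for by the bound $|C\setminus U|\le t+1$.
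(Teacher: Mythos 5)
Your proof is correct and follows essentially the same approach as the paper: obtain a $2t$-element circuit $C \supseteq \{z\} \cup \{x_i : i \in I\}$ from the $(t,2t)$-property, then for each $i \in I$ build a cocircuit from $S_i$ together with $t-1$ spines disjoint from $C$ (using \cref{lem:coechidna}) and apply orthogonality to force $y_i \in C$; the dual argument handles the cocircuit. The only cosmetic difference is that the paper establishes the existence of the $t-1$ disjoint spines via the cruder bound $n - 2t \ge t-1$, whereas your count exploits $|C \cap U| \ge t-1$ to get the same conclusion.
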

\fussy

\begin{proof}
  For $i \in [n]$, let $S_i=\{x_i,y_i\}$.
  By the $(t,2t)$-property, there is a $2t$-element circuit~$C$ containing $\{z\} \cup \{x_i : i \in I\}$.
  Let $J$ be a $(t-1)$-element subset of $[n]$ such that $C$ and $\bigcup_{j \in J}S_j$ are disjoint (such a set exists since $|C|=2t$ and $n \ge 3t-1$).
  For $i \in I$, let $C_i^*=S_i \cup (\bigcup_{j \in J} S_j)$, and observe that $x_i \in C_i^* \cap C$, and $C_i^* \cap C \subseteq S_i$.
  By \cref{lem:coechidna}, $(S_1,\dotsc,S_n)$ is a $t$-coechidna as well as a $t$-echidna; therefore, $C_i^*$ is a cocircuit.
  Now, for each $i \in I$, orthogonality implies that $|C_i^* \cap C| \ge 2$, and hence $y_i \in C$.
  So $C$ contains $\{z\} \cup (\bigcup_{i \in I}S_i)$, as required.

  By a dual argument, there is also a $2t$-element cocircuit~$C^*$ containing $\{z\} \cup (\bigcup_{i \in I}S_i)$.
\end{proof}

Let $(S_1,\dotsc,S_n)$ be a $t$-echidna of a matroid $M$.
If $(S_1,\dotsc,S_m)$ is a $t$-echidna of $M$, for some $m \ge n$, we say that $(S_1,\dotsc,S_n)$ \emph{extends} to $(S_1,\dotsc,S_m)$.
We say that $\pi=(S_1,\dotsc,S_n)$ is \emph{maximal} if there is no echidna other than $\pi$ to which $\pi$ extends.

\begin{lemma}
  \label{lem:swamping}
  Let $M$ be a matroid with the $(t,2t)$-property, with $t \ge 2$.
  If $M$ has a $t$-echidna $(S_1,\ldots, S_n)$, where $n\geq 4t-3$, then $(S_1,\ldots, S_n)$ extends to a partition of $E(M)$ that is both a $t$-echidna and a $t$-coechidna. 
\end{lemma}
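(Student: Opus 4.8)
The plan is to show that a $t$-echidna $(S_1,\dots,S_n)$ with $n \ge 4t-3$ that is not already a partition of $E(M)$ can be properly extended; iterating then gives a $t$-echidna partitioning $E(M)$, which by \cref{lem:coechidna} is automatically a $t$-coechidna as well (since $n \ge 4t-3 \ge 3t-1$ for $t\ge 2$). So the heart of the matter is the following extension step: if $z \in E(M) - \bigcup_{i=1}^n S_i$, then there is an element $z' \in E(M) - \bigcup_{i=1}^n S_i$ with $z'\neq z$ such that $(S_1,\dots,S_n,\{z,z'\})$ is a $t$-echidna.

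First I would produce the partner $z'$. Pick any $(t-1)$-subset $I \subseteq [n]$; by \cref{lem:rep-orthog}, there is a $2t$-element circuit $C$ containing $\{z\} \cup \bigcup_{i\in I}S_i$. Since $\bigl|\bigcup_{i\in I}S_i\bigr| = 2(t-1) = 2t-2$ and $z$ is one more element, $C$ contains exactly one further element $z'$, and $z' \notin \bigcup_{i\in I}S_i$. I would first argue $z' \notin \bigcup_{i=1}^n S_i$ entirely: if $z' \in S_j$ for some $j \notin I$, then choosing a different $(t-1)$-subset $I'$ with $j \in I'$ and $z \notin \bigcup_{i\in I'}S_i$ (possible since $n \ge 4t-3$ gives enough room, using $z$ lies outside all spines) and applying \cref{lem:rep-orthog} again would force a $2t$-circuit containing $\{z\}\cup S_j \cup \bigcup_{i \in I'-\{j\}} S_i$ plus at least one more partner — together with orthogonality against cocircuits of the form (spine)$\cup$(disjoint $(t-1)$ spines), guaranteed to be cocircuits by \cref{lem:coechidna}, this should be incompatible with $z'$ being forced into a spine; essentially, the pair $\{z,z'\}$ must behave like a new spine, and any element of an existing spine already has its partner determined. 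The cleanest route: show that for \emph{every} $(t-1)$-subset $I$ not containing $j$, the $2t$-circuit through $\{z\}\cup\bigcup_{i\in I}S_i$ has its extra element equal to $z'$, by a circuit-elimination/orthogonality argument comparing two such circuits sharing $z$ and $2t-4$ common spine-elements; then $z'$ is canonically attached to $z$.

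Next I would verify the echidna axioms for $(S_1,\dots,S_n,\{z,z'\})$: I must show $\{z,z'\} \cup \bigcup_{i\in K}S_i$ is a circuit for every $(t-1)$-subset $K \subseteq [n]$. Fix such a $K$. By \cref{lem:rep-orthog} applied to $K$ and the element $z$, there is a $2t$-element circuit $C_K$ containing $\{z\}\cup\bigcup_{i\in K}S_i$, and its unique extra element $w$ must — by the canonicity argument above, comparing $C_K$ with the circuit $C$ used to define $z'$ (they share $z$ and overlap in $\ge 2(t-1) - 2 = 2t-4$ spine elements when $|K\cap I| \ge t-2$, and one chains through intermediate $I$'s when the overlap is smaller, using $n$ large) — equal $z'$. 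Hence $C_K = \{z,z'\}\cup\bigcup_{i\in K}S_i$ is a circuit, as required. This establishes that the extended partition is a $t$-echidna of order $n+1 \ge 4t-3$.

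Finally, iterate: starting from $(S_1,\dots,S_n)$ and repeatedly absorbing elements of $E(M) - \bigcup S_i$ in pairs, after finitely many steps we obtain a $t$-echidna $(A_1,\dots,A_r)$ with $\bigcup A_i = E(M)$; note $|E(M)|$ is even as a consequence. Since $r \ge n \ge 4t-3 \ge 3t-1$, \cref{lem:coechidna} shows $(A_1,\dots,A_r)$ is also a $t$-coechidna, so $M$ is a $t$-spike and the lemma follows. The main obstacle, I expect, is the canonicity claim — that the "extra" element of the $2t$-circuit through $\{z\}\cup\bigcup_{i\in I}S_i$ does not depend on the choice of $(t-1)$-subset $I$. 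This requires carefully combining circuit elimination between two such $2t$-circuits with orthogonality against the spine cocircuits supplied by \cref{lem:coechidna}, and handling the case where two subsets $I, I'$ overlap too little by passing through a chain of intermediate subsets, which is where the bound $n \ge 4t-3$ is used.
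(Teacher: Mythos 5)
Your high-level plan---find a partner $z'$ for each element $z$ outside $\bigcup S_i$, absorb $\{z,z'\}$ as a new spine, and iterate---matches the spirit of the paper, but the route you take to verify the new spine leaves the central difficulty unresolved. You state the ``canonicity claim'' (that the extra element $z'$ of the $2t$-circuit through $\{z\}\cup\bigcup_{i\in I}S_i$ does not depend on the choice of $(t-1)$-set $I$) as the key step, correctly identify it as ``the main obstacle,'' and then leave it as a sketch: ``circuit elimination between two such $2t$-circuits,'' ``orthogonality against the spine cocircuits,'' and ``chaining through intermediate $I$'s.'' None of this is carried out, and the chaining argument in particular is not at all routine; as written, this is a genuine gap, not a proof. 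The same is true of your claim that $z'\notin\bigcup S_i$, which you support only with a vague appeal to ``incompatibility.''

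The paper avoids canonicity altogether by a cleaner duality argument. It fixes a maximal echidna $(S_1,\dotsc,S_m)$, uses \cref{lem:rep-orthog} to get a single circuit $C=\{z,z'\}\cup\bigcup_{i\in[t-1]}S_i$, shows $z'\notin X$ by a one-line orthogonality argument (a cocircuit $\bigcup_{j\in J}S_j$ with $J\subseteq[t,m]$ would meet $C$ in the single element $z'$), and then---rather than trying to prove directly that $\{z,z'\}\cup\bigcup_{i\in K}S_i$ is a circuit for every $K$---first proves that $(\{z,z'\},S_t,\dotsc,S_m)$ is a $t$-\emph{coechidna}. That step is easy: by \cref{lem:rep-orthog} there is a $2t$-cocircuit $C^*$ through $\{z\}\cup\bigcup_{i\in I}S_i$, and orthogonality of $C^*$ with the fixed circuit $C$ forces $z'\in C^*$. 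The dual of \cref{lem:coechidna} then upgrades this coechidna to an echidna for free, and one more round of orthogonality followed by the dual of \cref{lem:coechidna} again yields that $(\{z,z'\},S_1,\dotsc,S_m)$ is a $t$-echidna and $t$-coechidna, contradicting maximality. In short, the paper never proves that ``the'' partner of $z$ is well-defined across all choices of $(t-1)$-subsets; it proves something weaker (the coechidna property, which follows from a single orthogonality comparison against one fixed circuit $C$) and lets \cref{lem:coechidna} do the bootstrapping. If you want to repair your approach, the most direct fix is to adopt this order of argument: establish the cocircuit form first via orthogonality with your initial circuit $C$, then invoke the dual of \cref{lem:coechidna}, rather than attempting a direct circuit-elimination proof of canonicity.
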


\begin{proof}
  Suppose that $(S_1,\dotsc,S_n)$ extends to $\pi=(S_1, \dotsc, S_m)$, where $\pi$ is maximal.
  Let $X = \bigcup_{i=1}^m S_i$.
  By \cref{lem:coechidna}, $\pi$ is a $t$-coechidna as well as a $t$-echidna.
  The result holds if $X=E(M)$.
  Therefore, towards a contradiction, we suppose that $E(M)-X\neq\emptyset$.
  Let $z\in E(M)-X$.
  By \cref{lem:rep-orthog}, there is a $2t$-element circuit~$C = \{z,z'\} \cup (\bigcup_{i \in [t-1]}S_i)$, for some $z' \in E(M)-\big(\{z\} \cup (\bigcup_{i \in [t-1]}S_i)\big)$.

  We claim that $z' \notin X$.
  Towards a contradiction, suppose that $z'\in S_k$ for some $k\in [t,m]$.
  Let $J$ be a $t$-element subset of $[t,m]$ containing $k$.
  Then, since $(S_1,\dotsc,S_m)$ is a $t$-coechidna, $\bigcup_{j \in J}S_j$ is a cocircuit that contains $z'$.
  Now, by orthogonality, $z \in X$; a contradiction.
  Thus, $z'\notin X$, as claimed.

  We next show that $(\{z,z'\}, S_t, S_{t+1}, \ldots, S_m)$ is a $t$-coechidna.
  It suffices to show that $\{z,z'\} \cup (\bigcup_{i \in I}S_i)$ is a cocircuit for each $(t-1)$-element subset~$I$ of $[t,m]$.
  Let $I$ be such a set.
  \Cref{lem:rep-orthog} implies that there is a $2t$-element cocircuit~$C^*$ of $M$ containing $\{z\} \cup (\bigcup_{i\in I}S_i)$.
  By orthogonality, $|C\cap C^*|>1$. Therefore, $z'\in C^*$. Thus, $(\{z,z'\}, S_t, S_{t+1}, \ldots, S_m)$ is a $t$-coechidna.
  Since this $t$-coechidna has order $1+m-(t-1)\geq3t-1$, the dual of \cref{lem:coechidna} implies that $(\{z,z'\}, S_t, S_{t+1}, \dotsc, S_m)$ is also a $t$-echidna.

  Now, we claim that $(\{z,z'\}, S_1, S_2, \dotsc, S_m)$ is a $t$-coechidna.
  It suffices to show that $\{z,z'\} \cup (\bigcup_{i \in I}S_i)$ is a cocircuit for any $(t-1)$-element subset~$I$ of $[m]$.
  Let $I$ be such a set, and let $J$ be a $(t-1)$-element subset of $[t,m]-I$.
  By \cref{lem:rep-orthog}, there is a $2t$-element cocircuit~$C^*$ containing $\{z\} \cup (\bigcup_{i \in I}S_i)$.
  Moreover, $C=\{z,z'\} \cup (\bigcup_{j \in J}S_j)$ is a circuit since $(\{z,z'\}, S_t, S_{t+1}, \dotsc, S_m)$ is a $t$-echidna.
By orthogonality, $z'\in C^*$. Therefore, $(\{z,z'\}, S_1, S_2, \dotsc, S_m)$ is a $t$-coechidna.
By the dual of \cref{lem:coechidna}, it is also a $t$-echidna, contradicting the maximality of $(S_1,\dotsc,S_m)$. 
\end{proof}

\section{Matroids with the $(t,2t)$-property}
\label{sec:t2t}

In this section, we prove that every sufficiently large matroid with the $(t,2t)$-property is a $t$-spike.
Our primary goal is to show that a sufficiently large matroid with the $(t,2t)$-property has a large $t$-echidna or $t$-coechidna;
it then follows, by \cref{lem:swamping}, that the matroid is a $t$-spike.

For the entirety of the section, we assume that $M$ is a matroid with the $(t,2t)$-property.

\begin{lemma}
  \label{lem:rank-t}
  Let $X\subseteq E(M)$.
  \begin{enumerate}
    \item If $r(X)<t$, then $X$ is independent.\label{rt1}
    \item If $r(X)=t$, then $M|X\cong U_{t,|X|}$.\label{rt2}
  \end{enumerate}
\end{lemma}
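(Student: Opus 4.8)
The plan is to exploit the $(t,2t)$-property, which forces every $t$-element subset of $E(M)$ to lie in a $2t$-element circuit. For part \ref{rt1}, suppose $X$ is dependent with $r(X)<t$; then $X$ contains a circuit $C$ with $|C|\le r(X)+1\le t$. Extend $C$ to a $t$-set $Y\subseteq E(M)$ (possible as long as $M$ is large enough, which we may assume, or else handle the degenerate case directly). By the $(t,2t)$-property, $Y$ lies in a $2t$-element circuit $C'$. But $C\subsetneq Y\subseteq C'$ with $C$ a circuit and $C'$ a circuit: a circuit cannot properly contain another circuit, a contradiction. Hence $X$ is independent. (If $E(M)$ is too small to extend $C$ to a $t$-set, then $|E(M)|<t$ and there is nothing to prove.)

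For part \ref{rt2}, suppose $r(X)=t$. Since $U_{t,|X|}$ is the matroid on $|X|$ elements of rank $t$ in which every $t$-subset is independent (equivalently, every circuit has exactly $t+1$ elements, or $X$ itself is the unique circuit when $|X|=t+1$, etc.), it suffices to show that every $t$-element subset of $X$ is independent. Let $Y\subseteq X$ with $|Y|=t$, and suppose for contradiction that $Y$ is dependent. Then $Y$ contains a circuit $C$ with $|C|\le t$; since $r(X)=t\ge |C|$, in fact $|C|\le t$. Now invoke the $(t,2t)$-property on the $t$-set $Y$ (or, if $|C|<t$, first extend $C$ within $E(M)$ to a $t$-set $Y'$, using part \ref{rt1} applied inside $X$ to see that any strictly-smaller circuit already yields $r(X)<t$ unless... — actually cleaner: since $r(X)=t$ and $Y\subseteq X$, we have $r(Y)\le t$, and $Y$ dependent gives $r(Y)\le t-1<t$, so by part \ref{rt1} applied to $Y$ we would need $Y$ independent, contradiction). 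This last observation is in fact the whole argument: $Y$ dependent implies $r(Y)<t = |Y|$... wait, $|Y|=t$ and dependent gives $r(Y)\le t-1<t$, so part \ref{rt1} says $Y$ is independent, a contradiction. Therefore every $t$-subset of $X$ is independent, so $M|X\cong U_{t,|X|}$.

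The main obstacle I anticipate is the small-case bookkeeping: making sure that when a circuit $C$ inside $X$ has fewer than $t$ elements we can still extend it to a full $t$-set inside $E(M)$ (not merely inside $X$) in order to apply the $(t,2t)$-property, and handling the edge case $|E(M)|<t$ where the hypothesis is vacuous. As the argument above shows, part \ref{rt2} actually reduces cleanly to part \ref{rt1} once we observe that a dependent $t$-set has rank at most $t-1$; the only genuine use of the $(t,2t)$-property is in part \ref{rt1}, via the fact that a $2t$-element circuit cannot properly contain a smaller circuit.
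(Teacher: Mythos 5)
Your proof is correct and takes essentially the same approach as the paper: both rest on the observation that the $(t,2t)$-property rules out all circuits of size at most $t$ (the paper declares this ``clear''; you spell out the extend-to-a-$t$-set argument and invoke circuit minimality), and the second part then follows from the first. One minor caveat: when $|E(M)|<t$ the $(t,2t)$-property holds vacuously and the lemma can in fact fail (e.g.\ a matroid with a loop), so ``there is nothing to prove'' is not quite the right thing to say there; but this degenerate case is implicitly excluded throughout the paper too, so it is not a substantive gap.
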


\begin{proof}
  Clearly, as $M$ has the $(t,2t)$-property, 
  $M$ has no circuits of size at most $t$.
  Thus, if $r(X)<t$, then $X$ contains no circuits and is therefore independent.
  If $r(X)=t$, then a subset of $X$ is a circuit if and only if it has size $t+1$.
  Therefore, $M|X\cong U_{t,|X|}$.
\end{proof}

\begin{lemma}
  \label{lem:no-Ut3t}
  $M$ has no restriction isomorphic to $U_{t,3t}$.
\end{lemma}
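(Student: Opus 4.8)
The plan is to suppose, for contradiction, that $M$ has a restriction isomorphic to $U_{t,3t}$, say on the $3t$-element set $X$, and derive a contradiction using orthogonality together with the $(t,2t)$-property. Since $M|X \cong U_{t,3t}$, every $(t+1)$-subset of $X$ is a circuit, and every subset of $X$ of size at most $t$ is independent; more importantly, a subset of $X$ that spans $X$ is a cocircuit of $M|X$ exactly when its complement in $X$ has size $t-1$, i.e.\ the cocircuits of $M|X$ are the $(2t+1)$-subsets of $X$. The key point I want to exploit is that a cocircuit of $M|X$ is the intersection of $X$ with a cocircuit of $M$ (cocircuits of a restriction are obtained by intersecting cocircuits of the big matroid with the restricted set), but the sizes are forced: any $2t$-element cocircuit of $M$ must meet $X$ in a set that is a union of cocircuits of $M|X$ (or is empty), and the only nonempty such sets have size at least $2t+1$, which is already a contradiction if the cocircuit is contained in or largely inside $X$. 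So the real work is to force a $2t$-element cocircuit to interact with $X$ in a prohibited way.

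Concretely, first I would pick a $t$-subset $Y \subseteq X$. By the $(t,2t)$-property there is a $2t$-element cocircuit $C^*$ with $Y \subseteq C^*$. Now consider $C^* \cap X$, which has size at least $t$. The claim is that $C^* \cap X$, being the intersection of the cocircuit $C^*$ with the flat $X$ spanning a restriction, must be a disjoint union of cocircuits of $M|X$ together with possibly a coloop-free remainder — but in $U_{t,3t}$ there are no coloops and the cocircuits all have size $2t+1$, so $|C^* \cap X| \in \{0\} \cup \{j(2t+1) : j \ge 1\} \cup \dots$; since $t \le |C^* \cap X| \le 2t < 2t+1$, the only possibility is a contradiction unless $C^* \cap X$ fails to be a union of cocircuits, which happens only if $C^* \cap X$ meets $X$ but $X \setminus C^*$ still spans $M|X$ in a way incompatible with $C^*$ being a cocircuit. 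To make this airtight I would instead argue directly via orthogonality with circuits: take a $(t+1)$-subset $D$ of $X$ with $|D \cap C^*|$ small. Since $|Y| = t$ and $|X| = 3t$, I can choose $D \subseteq X$ with $D \cap C^* $ equal to exactly one element (pick one element of $Y \subseteq C^*$ and $t$ elements of $X \setminus C^*$, using that $|X \setminus C^*| \ge 3t - 2t = t$). Then $D$ is a circuit meeting the cocircuit $C^*$ in exactly one element, violating orthogonality.

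The main obstacle — and the step I would be most careful about — is the counting that guarantees I can choose the $(t+1)$-circuit $D \subseteq X$ meeting $C^*$ in exactly one element. This needs $|C^* \cap X| \ge 1$ (so I can include one element of the cocircuit) and $|X \setminus C^*| \ge t$ (so I can fill out $D$ to size $t+1$ avoiding $C^*$); the first holds since $Y \subseteq C^* \cap X$ gives $|C^* \cap X| \ge t \ge 1$, and the second holds since $|C^*| = 2t$ and $|X| = 3t$ force $|X \setminus C^*| \ge 3t - 2t = t$. With both inequalities in hand, orthogonality between the $(t+1)$-element circuit $D$ and the $2t$-element cocircuit $C^*$ is immediately violated, completing the proof. (One should double-check the degenerate case $t=1$ separately, though in that setting $U_{1,3}$ versus the $(1,2)$-property is easily handled directly.)
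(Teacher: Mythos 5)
Your argument is correct, and it takes a genuinely different route from the paper's. The paper fixes any $x \in X$ and any cocircuit $C^*$ containing $x$, then observes that $E(M)-C^*$ is a hyperplane (hence a flat), so $\cl(X-C^*) \subseteq E(M)-C^*$ and thus $r(X-C^*) < r(X) = t$; since $M|X$ is uniform this forces $|X-C^*| \le t-1$, so $|C^* \cap X| \ge 2t+1$ and $|C^*| > 2t$, contradicting the $(t,2t)$-property. You instead take the $2t$-element cocircuit $C^*$ through a $t$-subset $Y \subseteq X$ directly, and build a $(t+1)$-circuit $D \subseteq X$ (one element of $C^* \cap X$, which is nonempty since $Y \subseteq C^*$, plus $t$ elements of $X \setminus C^*$, which exist since $|X \setminus C^*| \ge 3t - 2t = t$) meeting $C^*$ in exactly one element, killed immediately by orthogonality. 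Your route buys a more elementary, self-contained contradiction using only orthogonality and counting; the paper's closure argument is slightly slicker and in fact proves the stronger statement that \emph{every} cocircuit meeting $X$ has size exceeding $2t$, not just the one provided by the $(t,2t)$-property. The digression in your middle paragraph about $C^* \cap X$ being a union of cocircuits of $M|X$ is muddled and unnecessary --- you rightly abandon it --- and the $t=1$ caveat is superfluous, as your counting works verbatim there. You would do well to cut both and present only the clean orthogonality argument.
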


\begin{proof}
  Let $X\subseteq E(M)$, and suppose towards a contradiction that $M|X\cong U_{t,3t}$.
  Let $x\in X$, and let $C^*$ be a cocircuit of $M$ containing $x$.
  Then $E(M)-C^*$ is closed, so $\cl(X-C^*)\subseteq \cl(E(M)-C^*) = E(M)-C^*$.
  Therefore, $r(X-C^*)<r(X)=t$, implying that $|C^*| > 2t$.
  But then every cocircuit containing $x$ has size greater than $2t$, contradicting the $(t,2t)$-property.
\end{proof}

\begin{lemma}
  \label{lemmaA}
  Let $C_1^*,C_2^*,\dotsc,C_{t-1}^*$ be a collection of $t-1$ pairwise disjoint cocircuits of $M$. 
  Let $Y = E(M)-\bigcup_{i \in [t-1]} C_i^*$.
  For all $y \in Y$, there is a $2t$-element circuit~$C_y$ containing $y$ such that either
  \begin{enumerate}
    \item $|C_y \cap C_i^*| = 2$ for all $i \in [t-1]$, or\label{A1}
    \item $|C_y \cap C_j^*| = 3$ for some $j \in [t-1]$, and $|C_y \cap C_i^*| = 2$ for all $i \in [t-1]-\{j\}$.\label{A2}
  \end{enumerate}
  Moreover, if $C_y = S \cup \{y\}$ satisfies \ref{A2}, then there are at most $3t-1$ elements $w \in Y$ such that $S \cup \{w\}$ is a circuit.
\end{lemma}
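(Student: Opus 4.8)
The plan is to build the circuit $C_y$ one cocircuit at a time, maintaining a partial circuit that meets each cocircuit processed so far in exactly two elements, and invoke the $(t,2t)$-property only at the very end to close it off. First I would set up an auxiliary structure: for $y\in Y$, consider what a $2t$-element circuit through $y$ can look like. By orthogonality, such a circuit $C_y$ meets each $C_i^*$ in an even number of elements (at least $0$, and not exactly $1$), but since the $C_i^*$ are disjoint and $|C_y|=2t$, while $C_y$ also uses the element $y\notin\bigcup C_i^*$, the total intersection with $\bigcup_{i\in[t-1]}C_i^*$ is at most $2t-1$. If $C_y$ misses some $C_i^*$ entirely, then $r(C_y - C_i^*) = r(C_y)$, and since $E(M)-C_i^*$ is closed, a rank count (as in the proof of \cref{lem:no-Ut3t}) should force $C_y$ to have rank $<t$, contradicting that a $2t$-element circuit has rank $2t-1 \ge t$ when $t\ge 2$ — wait, I need to be careful: a $2t$-circuit has rank $2t-1$, so this only gives a contradiction via a different route. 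Actually the cleaner statement is: $C_y$ meets each $C_i^*$, so $|C_y\cap C_i^*|\ge 2$ for every $i$, giving $\sum_i |C_y\cap C_i^*| \ge 2(t-1)$; combined with $y\in C_y$ and $|C_y|=2t$ we get $\sum_i |C_y\cap C_i^*| \le 2t-1$, so the intersection sizes are all $2$ except at most one which is $3$ (and the remaining element of $C_y$ is $y$ together with possibly one more — here one must track the parity/count precisely). This yields the dichotomy \ref{A1}/\ref{A2}. The remaining task is \emph{existence} of such a $C_y$: pick one element $x_i \in C_i^*$ for each $i$, and a $2t$-element circuit through $\{y,x_1,\dotsc,x_{t-1}\}$ (which exists by the $(t,2t)$-property applied to the $t$-set $\{y,x_1,\dotsc,x_{t-1}\}$); orthogonality forces a second element of each $C_i^*$ into this circuit, so the circuit is $\{y\}\cup\{x_i,x_i' : i\in[t-1]\}$ plus possibly nothing more, i.e.\ it has exactly $2t-1$ named elements plus $y$ — which is $2t$, so it is exactly of type \ref{A1} unless the "extra" accounting forces a triple. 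This existence argument essentially parallels the proof of \cref{lem:rep-orthog}.

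For the "moreover" clause, suppose $C_y = S\cup\{y\}$ satisfies \ref{A2}, so there is a distinguished $j$ with $|C_y\cap C_j^*|=3$; write $C_j^*\cap C_y = \{a,b,c\}$ and $C_i^*\cap C_y = \{a_i,b_i\}$ for $i\neq j$, and note $S = \{a,b,c\}\cup\bigcup_{i\ne j}\{a_i,b_i\}$, which has $2t-1$ elements. I want to bound the number of $w\in Y$ with $S\cup\{w\}$ a circuit. The key point is that $S$ spans a low-dimensional flat: since $S\cup\{y\}$ is a $2t$-circuit, $r(S) = 2t-1$ and $S$ is independent, so $S\cup\{w\}$ being a circuit forces $w\in\cl(S)$. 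Thus the set of such $w$ lies in $\cl(S)\cap Y$. Now I'd argue $\cl(S)$ is small: the element $c$ (the "third" element of $C_j^*$ in $S$) together with the pattern of the other cocircuits constrains things. Actually the cleanest bound: consider the $t$-element set $T = \{a,b_1,b_2,\dotsc,\widehat{b_j},\dotsc\}$ — hmm, let me instead use \cref{lem:no-Ut3t} directly. If more than $3t-1$ elements $w$ had $S\cup\{w\}$ a circuit, then together with the $2t-1$ elements of $S$ we would have a set $Z = S\cup\{w_1,\dotsc,w_{3t}\}$ with every $w_k\in\cl(S)$ and $r(Z) = r(S) = 2t-1$; restricting to a well-chosen $t$-subset of $S$'s rank together with the $w_k$'s... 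The intended argument is surely: find a rank-$t$ subset among these elements of the form $U_{t,\ge 3t}$, contradicting \cref{lem:no-Ut3t}. Since $M|(\cl(S))$ has all circuits of size $\ge t+1$, and one can locate a rank-$t$ flat inside $\cl(S)$ containing many of the $w_k$ plus enough of $S$: by \cref{lem:rank-t}\ref{rt2}, any rank-$t$ restriction of size $\ge 3t$ is $U_{t,3t}$, contradiction. So if $\ge 3t$ elements of $Y$ completed $S$ to a circuit, picking any $t-1$ independent elements of $S$ and adjoining all these $w_k$'s would (if the rank stays $t$) give the forbidden $U_{t,3t}$; the bound $3t-1$ comes from ensuring the rank does jump past $t$ otherwise. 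The main obstacle I anticipate is pinning down exactly why the relevant restriction has rank precisely $t$ (not larger) when there are many completing elements $w$ — this requires using that $S$ has a very specific structure coming from \ref{A2} (in particular that $\{a,b,c\}$ is a $U_{t',\cdot}$-type triple sitting inside $C_j^*$), and carefully choosing which $t-1$ elements of $S$ to keep so that the span of those together with $\{a,b,c\}$ or the $w$'s has rank $t$. That bookkeeping — identifying a rank-$t$ set of the form $U_{t,3t}$ — is the crux, and everything else is orthogonality and rank counting of the kind already used in Lemmas~\ref{lem:no-Ut3t} and~\ref{lem:rep-orthog}.
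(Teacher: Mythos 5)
Your handling of the existence and dichotomy is essentially the paper's: choose one element $c_i \in C_i^*$ for each $i$, apply the $(t,2t)$-property to the $t$-set $\{c_1,\dotsc,c_{t-1},y\}$, and combine orthogonality with the count $|C_y|=2t$, $y\in C_y$, to conclude that the intersection sizes with the $C_i^*$ are all $2$ except at most one which is $3$. (Your opening remark that orthogonality forces each intersection to have \emph{even} size is not right --- orthogonality only rules out intersections of size exactly one --- but you abandon that line and recover.)

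The ``moreover'' clause, however, has a genuine gap. You correctly reduce the problem to bounding the rank of $W = \{w \in Y : S \cup \{w\} \text{ is a circuit}\} \subseteq \cl(S) \cap Y$, after which \cref{lem:rank-t} and \cref{lem:no-Ut3t} yield $|W| < 3t$. But you never actually establish $r(W) \le t$: you explicitly flag this as ``the crux'' and propose only a vague route (adjoin $t-1$ independent elements of $S$ to the $w_k$'s and hope the resulting set has rank exactly $t$) that is neither carried out nor clearly correct. The paper settles this by a direct submodularity computation:
\[
  r(W) \le r(\cl(S) \cap Y) \le r(\cl(S)) + r(Y) - r(\cl(S) \cup Y) \le (2t-1) + r(Y) - \bigl(r(Y)+(t-1)\bigr) = t.
\]
Here $r(\cl(S)) = 2t-1$ because $S$ is an independent $(2t-1)$-set (a proper subset of a circuit), and $r(\cl(S) \cup Y) \ge r(Y) + (t-1)$ because $S$ contains an element $x_i$ of each $C_i^*$, each complement $E(M)-C_i^*$ is a flat containing $Y$, and the $C_i^*$ are pairwise disjoint, so one can add $x_1,\dotsc,x_{t-1}$ to $Y$ one at a time with the rank increasing by one at each step. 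Note this argument does not rely on ``the very specific structure coming from case (ii)'' that you suggest is needed --- it works verbatim for case (i) as well --- nor does it require locating a rank-$t$ flat inside $\cl(S)$ or any bookkeeping with a chosen subset of $S$.
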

\begin{proof}
  Choose an element $c_i \in C_i^*$ for each $i \in [t-1]$.  By the $(t,2t)$-property, there is a $2t$-element circuit~$C_y$ containing $\{c_1,c_2,\dotsc,c_{t-1},y\}$, for each $y \in Y$.
  By orthogonality, $C_y$ satisfies \ref{A1} or \ref{A2}.

  Suppose $C_y$ satisfies \ref{A2}, and let $S =C_y-Y= C_y-\{y\}$.
  Let $W = \{w \in Y : S \cup \{w\} \textrm{ is a circuit}\}$.
  It remains to prove that $|W| < 3t$.
  Observe that $W \subseteq \cl(S) \cap Y$, and, since $S$ contains $t-1$ elements in pairwise disjoint cocircuits that avoid $Y$, we have $r(\cl(S) \cup Y) \ge r(Y) + (t-1)$.  Thus,
  \begin{align*}
    r(W) &\le r(\cl(S) \cap Y) \\
    &\le r(\cl(S)) + r(Y) - r(\cl(S) \cup Y) \\
    &\le (2t-1) + r(Y) - (r(Y)+ (t-1)) \\
    &=t,
  \end{align*}
  using submodularity of the rank function at the second line.

  Now, by \cref{lem:rank-t}\ref{rt1}, if $r(W) < t$, then $W$ is independent, so $|W| = r(W) < t$.
  On the other hand, by \cref{lem:rank-t}\ref{rt2}, if $r(W)=t$, then $M|W \cong U_{t,|W|}$.
  Since $M$ has no restriction isomorphic to $U_{t,3t}$, by \cref{lem:no-Ut3t}, we deduce that $|W| < 3t$, as required.
\end{proof}

The next lemma can be viewed as a stronger form of \cref{lem:disjoint2} for a matroid with the $(t,2t)$-property.

\begin{lemma}
  \label{lem:disjoint}
  There exists a function $h: \mathbb{N} \times \mathbb{N} \rightarrow \mathbb{N}$ such that if $M$ has at least $h(\ell,d)$ $\ell$-element circuits, then $M$ has a collection of $d$ pairwise disjoint $2t$-element cocircuits.
\end{lemma}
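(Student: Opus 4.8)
The plan is to bootstrap from Lemma~\ref{lem:disjoint2} using the $(t,2t)$-property. Set $h(\ell,d) = g(\ell,td)$, where $g$ is the function supplied by Lemma~\ref{lem:disjoint2} (here $t$ is the fixed parameter of the ambient matroid $M$). Assuming $M$ has at least $h(\ell,d)$ $\ell$-element circuits, Lemma~\ref{lem:disjoint2} hands us a collection $C_1,\dotsc,C_{td}$ of $td$ pairwise disjoint circuits. I would then partition $[td]$ into $d$ blocks $B_1,\dotsc,B_d$, each of size $t$, and, for each $j\in[d]$, pick an element $c_i\in C_i$ for every $i\in B_j$ and apply the $(t,2t)$-property to the $t$-set $\{c_i:i\in B_j\}$ to obtain a $2t$-element cocircuit $C_j^*$ containing it.

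The heart of the argument is to show that $C_j^*\subseteq\bigcup_{i\in B_j}C_i$: since $c_i\in C_j^*\cap C_i$, orthogonality gives $|C_j^*\cap C_i|\ge 2$ for each $i\in B_j$, and as the $C_i$ with $i\in B_j$ are pairwise disjoint, summing over $i\in B_j$ yields $\bigl|C_j^*\cap\bigcup_{i\in B_j}C_i\bigr|\ge 2t=|C_j^*|$, forcing the containment (and in fact $|C_j^*\cap C_i|=2$ for each $i\in B_j$). Because the blocks $B_1,\dotsc,B_d$ are disjoint and the $C_i$ are pairwise disjoint, the unions $\bigcup_{i\in B_1}C_i,\dotsc,\bigcup_{i\in B_d}C_i$ are pairwise disjoint, hence so are the cocircuits $C_1^*,\dotsc,C_d^*$, which is the desired collection of $d$ pairwise disjoint $2t$-element cocircuits.

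I do not expect a genuine obstacle here; the only point requiring care is the counting step that localises each cocircuit inside the union of its block of circuits, which is exactly where both orthogonality and the precise size $2t$ of the cocircuit are needed. One should also observe that the statement is vacuous unless $\ell\ge t+1$, since a matroid with the $(t,2t)$-property has no circuit of size at most $t$, so no separate small-$\ell$ case arises.
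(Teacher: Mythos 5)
Your proof is correct and follows essentially the same route as the paper: set $h(\ell,d)=g(\ell,td)$, obtain $td$ pairwise disjoint circuits from \cref{lem:disjoint2}, group them into $d$ blocks of $t$, and use the $(t,2t)$-property together with orthogonality to locate each resulting $2t$-element cocircuit inside the union of its block. The only difference is that you spell out the counting step (summing $|C_j^*\cap C_i|\ge 2$ over the $t$ disjoint circuits in a block to force $C_j^*\subseteq\bigcup_{i\in B_j}C_i$) which the paper compresses into the phrase ``by orthogonality.''
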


\begin{proof}
  By \cref{lem:disjoint2}, there is a function $g$ such that if $M$ has at least $g(\ell,d)$ $\ell$-element circuits, then $M$ has a collection of $d$ pairwise disjoint circuits.
  We define $h(\ell,d) = g(\ell,td)$,
  and claim that a matroid with at least $h(\ell,d)$ $\ell$-element circuits has a collection of $d$ pairwise disjoint $2t$-element cocircuits.

  Let $M$ be such a matroid.
  By \cref{lem:disjoint2}, $M$ has a collection of $td$ pairwise disjoint circuits.
  We partition these into $d$ groups of size $t$: 
  call this partition $(\mathcal{C}_1,\dotsc,\mathcal{C}_d)$.
  Since the $t$ circuits in any cell of this partition are pairwise disjoint,
  it now suffices to show that, for each $i \in [d]$, there is a $2t$-element cocircuit contained in the union of the members of $\mathcal{C}_i$.
  Let $\mathcal{C}_i = \{C_1,\dotsc,C_t\}$ for some $i \in [d]$.
  Pick some $c_j \in C_j$ for each $j \in [t]$.
  Then, by the $(t,2t)$-property, $\{c_1,c_2,\dotsc,c_t\}$ is contained in a $2t$-element cocircuit, which, by orthogonality, is contained in $\bigcup_{j \in [t]}C_j$.
\end{proof}

\begin{lemma}
  \label{setup}
  There exists a function $g$ such that if $|E(M)| \ge g(t,q)$, then, for some $M' \in \{M,M^*\}$, the matroid $M'$ has $t-1$ pairwise disjoint cocircuits $C_1^*, C_2^*, \dotsc, C_{t-1}^*$, and
  there is some $Z \subseteq E(M')-\bigcup_{i \in [t-1]}C_i^*$ such that
  \begin{enumerate}
    \item $r_{M'}(Z) \ge q$, and\label{ps1}
    \item for each $z \in Z$, there exists an element $z'\in Z-\{z\}$ such that each $C_i^*$ contains a pair of elements $\{x_i,x_i'\}$ for which $\{z,z'\} \cup (\bigcup_{i\in [t-1]}\{x_i,x_i'\})$ is a circuit of $M'$.
      \label{ps2}
  \end{enumerate}
\end{lemma}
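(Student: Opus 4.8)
The plan is to find a large collection of $2t$-element circuits in either $M$ or $M^*$, extract from it many pairwise disjoint $2t$-element cocircuits using \cref{lem:disjoint}, and then apply \cref{lemmaA} to $t-1$ of these cocircuits to produce the required set $Z$. First I would observe that if $|E(M)|$ is large, then by partitioning a subset of $E(M)$ into many pairwise disjoint $t$-sets and applying the $(t,2t)$-property to each, $M$ has many $2t$-element circuits; dually, $M^*$ has many $2t$-element circuits if we prefer. So for a suitable threshold on $|E(M)|$, at least one of $M,M^*$ — call it $M'$ — has at least $h(2t,t)$ $2t$-element circuits, where $h$ is the function of \cref{lem:disjoint}. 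By \cref{lem:disjoint}, $M'$ then has $t-1$ pairwise disjoint $2t$-element cocircuits $C_1^*,\dotsc,C_{t-1}^*$ (I'd take $d = t-1$ there, or $d=t$ if a cleaner count is wanted and then discard one). Actually, to get the rank bound in \ref{ps1} I will need $M'$ to still have very many $2t$-circuits \emph{after} this, so I would instead demand $M'$ has at least $h(2t, d)$ $2t$-circuits for a much larger $d$, chosen at the end.

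Next, set $Y = E(M') - \bigcup_{i\in[t-1]} C_i^*$; note $|Y| \ge |E(M')| - 2t(t-1)$ is still enormous. For each $y \in Y$, \cref{lemmaA} produces a $2t$-element circuit $C_y$ containing $y$ meeting the $C_i^*$ in the pattern \ref{A1} or \ref{A2}. Write $C_y = S_y \cup (C_y \cap Y)$ where $S_y = C_y - Y \subseteq \bigcup_i C_i^*$. The key point is that $C_y \cap Y$ has size $1$ or $2$ (it is $2$ exactly when pattern \ref{A1} holds, and $1$ when \ref{A2} holds, since $|C_y| = 2t$ and $C_y$ meets each of the $t-1$ cocircuits in $2$ or $3$ elements). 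I want to restrict to the $y$'s giving pattern \ref{A1}, since those immediately give the "$z' = $ the other element of $C_y \cap Y$" required in \ref{ps2}. The moreover-clause of \cref{lemmaA} says each pattern-\ref{A2} set $S$ is the "tail" of at most $3t-1$ circuits $S\cup\{w\}$; since there are at most $\binom{2t(t-1)}{2t-1}$ possible tails $S$ (a constant depending only on $t$), at most $(3t-1)\binom{2t(t-1)}{2t-1}$ elements of $Y$ fall under pattern \ref{A2}. Hence all but a constant number of elements of $Y$ give pattern \ref{A1}; call this huge set $Y_1 \subseteq Y$.

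Finally I would extract $Z$ from $Y_1$ by a closing-off / rank argument: take $Z$ to be a maximal subset of $Y_1$ that is closed under "$y \mapsto$ the partner element of $C_y \cap Y$" — concretely, greedily build $Z$ by adding, for each $y$ already in $Z$ with partner $z' \in Y_1$, also $z'$; this forces \ref{ps2} as long as the partner lies in $Y_1$. To control things I would instead argue more simply: since $|Y_1|$ is large and every element of $Y_1$ has its $C_y$-partner in $Y$ (not necessarily in $Y_1$), I look at the subset $Z_0 \subseteq Y_1$ of $y$ whose partner $z'$ also lies in $Y_1$; the complement $Y_1 \setminus Z_0$ maps injectively-ish into the constant-size set $Y \setminus Y_1$ (each bad partner in $Y\setminus Y_1$ can be the partner of boundedly many $y$, by the same tail-counting as above), so $|Z_0|$ is still huge. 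Then $Z_0$ satisfies \ref{ps2} by construction. For \ref{ps1}, $r_{M'}(Z_0) \ge q$ follows because $Z_0$ is a large subset of $E(M')$ and $M'$ has no large uniform restriction: by \cref{lem:no-Ut3t} and \cref{lem:rank-t}, any set of rank $\le t$ has at most $3t-1$ elements, so a set of $3t \cdot \binom{q}{\,\le t-1\,}$-ish elements — more carefully, any set with more than $(3t-1)$ elements has rank $\ge t+1$, and iterating, a set of size exceeding some bound $b(t,q)$ has rank $\ge q$ — here I'd make the clean statement that in any matroid with no $U_{t,3t}$-restriction and no circuit of size $\le t$, a set of size $N$ has rank at least roughly $\log_{3t} N$, or at least $q$ once $N \ge b(t,q)$. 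So I choose the original threshold $g(t,q)$ large enough that after removing $2t(t-1)$ elements for the cocircuits and the constant $(3t-1)\binom{2t(t-1)}{2t-1}$ plus its doubling for the partner cleanup, at least $b(t,q)$ elements remain; then $Z := Z_0$ works.

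The main obstacle I expect is the \ref{ps1} rank bound together with the bookkeeping in the last paragraph: one must simultaneously guarantee that $Z$ is both large \emph{and} closed under the partner map, while only having \cref{lemmaA}'s weak control (boundedly many $y$ per tail) on how partners can escape the "good" set. The cleanest route is probably to iterate \cref{lemmaA}: build $Z$ one element at a time, each time $y$ is added also adding its partner $z'$, and bounding how often a newly added $z'$ is "bad" (partner outside the remaining pool or creating a pattern-\ref{A2} obstruction) by the tail-count; since each bad event is charged to one of constantly many tails, the process runs for arbitrarily long, yielding $|Z| \ge b(t,q)$ and hence $r_{M'}(Z) \ge q$. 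Everything else — producing many $2t$-circuits, invoking \cref{lem:disjoint}, and the pattern analysis — is routine given the lemmas already proved.
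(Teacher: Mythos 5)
Your overall outline matches the paper's: produce many $2t$-circuits, use \cref{lem:disjoint} to extract $t-1$ pairwise disjoint $2t$-cocircuits $C_1^*,\dotsc,C_{t-1}^*$, apply \cref{lemmaA} to each $y\in Y := E(M')-\bigcup C_i^*$, throw away the pattern-\ref{A2} elements (bounded in number by the ``moreover'' clause together with the $\binom{2t}{3}\binom{2t}{2}^{t-2}(t-1)$ possible tails), and let $Z$ be what remains. Up to that point you are in step with the paper.

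The genuine gap is in how you try to establish \ref{ps1}. You argue that a sufficiently large subset of $E(M')$ must have rank at least $q$, via ``no $U_{t,3t}$ restriction $\Rightarrow$ rank-$\le t$ sets have at most $3t-1$ elements, and iterating.'' That first step is correct (it is exactly \cref{lem:rank-t} plus \cref{lem:no-Ut3t}), but the iteration has no support: nothing proved in the paper bounds the size of a rank-$(t+1)$ set, a rank-$(t+2)$ set, and so on, and there is no lemma in the paper of the form ``a set of size $N$ in a matroid with the $(t,2t)$-property has rank $\ge \log_{3t} N$'' or any analogue. Without such a bound, you cannot conclude $r_{M'}(Z)\ge q$ from $|Z|$ being large. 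The paper never needs this: it runs the whole construction for \emph{both} $M'=M$ and $M'=M^*$ (the $(t,2t)$-property is self-dual, so both give disjoint cocircuits and a set $Z$ satisfying \ref{ps2}), and then uses the freedom to pick whichever of $M,M^*$ has rank at least $|E(M)|/2 \ge q + s(t) + 2t(t-1)$. Once $r(M')$ is large, one only needs subadditivity: $r_{M'}(Y)\ge r(M')-|X|=r(M')-2t(t-1)$ and $r_{M'}(Z)\ge r_{M'}(Y)-|W|\ge r(M')-2t(t-1)-s(t)\ge q$. So the crucial missing idea in your proposal is this duality-based choice of $M'$ and the use of $r_{M'}(A)\ge r(M')-|E(M')-A|$, in place of an unjustified ``large set $\Rightarrow$ large rank'' claim.

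Two smaller points. First, in your partner-cleanup step you claim each ``bad partner'' in $Y\setminus Y_1$ is the partner of boundedly many $y$ ``by the same tail-counting''; the ``moreover'' clause of \cref{lemmaA} is stated only for pattern-\ref{A2} tails (those lying entirely in $\bigcup C_i^*$), not for tails of the form $\{w\}\cup X'$ with $w\in Y$, so this needs a separate (though analogous) argument. Second, this cleanup machinery is avoidable: the paper simply defines $W$ to be all of $Y$ that lie in \emph{any} pattern-\ref{A2} circuit and sets $Z=Y-W$, which sidesteps most of the bookkeeping you describe.
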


\begin{proof}
  Let $M' \in \{M,M^*\}$.
  By \cref{lem:disjoint}, there is a function $h$ such that if $M'$ has at least $h(\ell,d)$ $\ell$-element circuits, then $M'$ has a collection of $d$ pairwise disjoint $2t$-element cocircuits.

  Suppose $|E(M)|\geq 2t\cdot h(2t,t-1)$.
  Then, by the $(t,2t)$-property, $M'$ has at least $h(2t,t-1)$ distinct $2t$-element circuits.
  Hence, by \cref{lem:disjoint}, $M'$ has a collection of $t-1$ pairwise disjoint $2t$-element cocircuits $C_1^*, C_2^*, \dotsc, C_{t-1}^*$.

  Let $X = \bigcup_{i \in [t-1]}C_i^*$ and $Y=E(M)-X$.
  By \cref{lemmaA}, for each $y \in Y$ there is a $2t$-element circuit~$C_y$ containing $y$ such that $|C_y \cap C_j^*| = 3$ for at most one $j \in [t-1]$ and $|C_y \cap C_i^*| = 2$ otherwise.
  Let $W$ be the set of all $w \in Y$ such that 
  $w$ is in a $2t$-element circuit~$C$ with $|C\cap C_j^*|=3$ for some $j \in [t-1]$, and $|C \cap C_i^*|=2$ for all $i \in [t-1]-\{j\}$.
  Now, letting $Z=Y-W$, we see that \ref{ps2} is satisfied for both $M' = M$ and $M'=M^*$.

  Since the $C_i^*$'s have size $2t$,
  there are $(t-1)\binom{2t}{3}\binom{2t}{2}^{t-2}$ 
  sets $X'\subseteq X$ with $|X' \cap C_j^*|=3$ for some $j \in [t-1]$ and $|X' \cap C_i^*|=2$ for all $i \in [t-1]-\{j\}$.
  It follows, by \cref{lemmaA}, that $|W| \le s(t)$ where $$s(t) = (3t-1)\left[(t-1)\binom{2t}{3}\binom{2t}{2}^{t-2}\right].$$

  We define $$g(t,q) = \max\left\{2t\cdot h(2t,t-1), 2\big(q+s(t)+2t(t-1)\big)\right\}.$$
  Suppose that $|E(M)| \ge g(t,q)$.
  Recall that \ref{ps2} holds for both $M'=M$ and $M'=M^*$.
  Moreover, we can choose $M' \in \{M,M^*\}$ such that $r(M') \ge q+s(t)+2t(t-1)$. Then,
  \begin{align*}
    r_{M'}(Z) &\ge r_{M'}(Y) - |W| \\
    &\ge \big(r(M')-2t(t-1)\big) - s(t) \\
    &\ge q,
  \end{align*}
  so \ref{ps1} holds as well, as required.
\end{proof}

\sloppy
\begin{lemma}
  \label{payoff}
  Suppose $M$ has $t-1$ pairwise disjoint cocircuits $C_1^*, C_2^*, \dotsc, C_{t-1}^*$,
  and, for some positive integer~$p$, there is some $Z \subseteq E(M)-\bigcup_{i \in [t-1]}C_i^*$ such that
  \begin{enumerate}[label=\rm(\alph*)]
    \item $r(Z) \ge \binom{2t}{2}^{t-1}(p + 2(t-1))$, and
%
    \item for each $z \in Z$, there exists an element $z'\in Z-\{z\}$ such that each $C_i^*$ contains a pair of elements $\{x_i,x_i'\}$ for which $\{z,z'\} \cup (\bigcup_{i\in [t-1]}\{x_i,x_i'\})$ is a circuit of $M$.
  \end{enumerate}
  Then there exists a subset $Z' \subseteq Z$ and a partition $\mathcal{Z}' = ( Z_1', \dotsc, Z_p' )$ of $Z'$ into pairs such that
  \begin{enumerate}
    \item each circuit of $M|Z'$ is a union of pairs in $\mathcal{Z}'$, and
    \item the union of any $t$ pairs of $\mathcal{Z}'$ contains a circuit.
  \end{enumerate}
\end{lemma}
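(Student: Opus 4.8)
The plan is to use hypothesis (b) to define, for each $z \in Z$, a ``partner'' $z'$ and a circuit through $\{z,z'\}$, and then to apply a Ramsey-type pigeonhole argument (via \cref{lem:Ramsey}) to the associated data so that all the selected circuits interact with $C_1^*,\dotsc,C_{t-1}^*$ in a uniform way. For each $z$, hypothesis (b) gives a circuit $C_z = \{z,z'\} \cup \bigcup_{i\in[t-1]}\{x_i,x_i'\}$ with $|C_z \cap C_i^*| = 2$ for every $i$. There are only $\binom{2t}{2}^{t-1}$ choices for the unordered tuple $(\{x_1,x_1'\},\dotsc,\{x_{t-1},x_{t-1}'\})$ of pairs inside the $C_i^*$, so by pigeonhole a subset $Z_0 \subseteq Z$ with $r(Z_0) \ge p + 2(t-1)$ uses a single common tuple; fix that tuple and write $P = \bigcup_i \{x_i,x_i'\}$, a set of $2(t-1)$ elements. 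Now for every $z \in Z_0$ we have a circuit $\{z,z'\} \cup P$ for some $z' \in Z_0$. (One must be slightly careful that $z' \in Z_0$, not merely in $Z$; I would arrange this by first pigeonholing on the tuple and then passing to a rank-large subset, or by absorbing the partners — since $z' \in \cl(\{z\}\cup P)$ from the circuit, the partners lie in a set of rank at most $r(Z_0)+2(t-1)$, which is still large.)

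Next I want to understand $M | (Z_0 \cup P)$, or rather its contraction by $P$. The key observation is that $\{z,z'\} \cup P$ being a circuit means that in $M/P$, the pair $\{z,z'\}$ is a circuit — i.e. $z$ and $z'$ are parallel (or one is a loop) in $M/P$, provided $P$ is independent, which it is since $P \subseteq \bigcup C_i^*$ and $M$ has no small circuits, or more carefully since each $\{x_i,x_i'\}$ lies in a cocircuit and two-element subsets of $E(M)$ are independent by \cref{lem:rank-t}. Wait — I need $P$ itself independent; $|P| = 2(t-1) < 2t$, and $M$ has no circuits of size $< t+1$... but $2(t-1)$ can exceed $t$. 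So instead I argue: the circuit $\{z,z'\}\cup P$ has size $2t$, hence $r(\{z,z'\}\cup P) = 2t-1$; and since it contains $P$, and removing the two elements $z,z'$ can drop the rank by at most $2$, we get $r(P) \ge 2t-3$... this is not quite enough on its own, but combined with orthogonality against the $t-1$ cocircuits $C_i^*$ (each $C_i^*$ meets $\{z,z'\}\cup P$ in exactly the two elements $\{x_i,x_i'\}$, these being in distinct cocircuits) one shows $P$ is independent. Granting that, the partnership $z \sim z'$ on $Z_0$ (mod $P$) is the restriction to $Z_0$ of the parallel-equivalence in $M/P$, hence transitive where defined, so it partitions a large subset $Z' \subseteq Z_0$ into pairs $\mathcal{Z}' = (Z_1',\dotsc,Z_p')$; choosing $Z'$ to contain $p$ such pairs is possible because $r(Z_0)$ is at least $\approx p$ (classes of $M/P$ inside $Z_0$ contribute to rank, so there are at least $p$ parallel classes of size $\ge 2$ after discarding the at most $2(t-1)$ elements whose partner escaped $Z_0$).

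It remains to verify the two conclusions. For (1): a circuit $C$ of $M|Z' \subseteq M|Z_0$ is a circuit of $M$ avoiding $P$, hence a circuit (or a disjoint union of parallel points, but circuits are connected) — more precisely, in $M/P$, $C$ is a union of circuits, each of which is a parallel pair $Z_j'$; since $C$ is a circuit of $M$ not meeting $P$ and $r(C) = |C|-1$, and each element of $C$ is parallel in $M/P$ to exactly its partner, $C$ must be a union of the pairs $Z_j'$ it meets — I would phrase this via: if $z \in C$ then $z' \in \cl_M(C \cup P)$, and a short orthogonality/circuit-exchange argument forces $z' \in C$. For (2): given any $t$ pairs $Z_{j_1}',\dotsc,Z_{j_t}'$, pick one representative $z_{j_k}$ from each; by the $(t,2t)$-property there is a $2t$-element circuit $C$ of $M$ containing $\{z_{j_1},\dotsc,z_{j_t}\}$, and I claim $C \subseteq \bigcup_k Z_{j_k}'$. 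Indeed each $Z_{j_k}' \cup P$ is a circuit, so each $z_{j_k}$ lies in a circuit of $M/P$ (its parallel pair); orthogonality of $C$ against cocircuits, or better, applying circuit elimination to $C$ and these pairs inside $M$, together with the rank bound and the fact established in (1) that circuits in $Z'$ are unions of pairs, pins down $C$ as exactly the union of those $t$ pairs (size $2t$ — consistent). This last step — showing the $(t,2t)$-circuit through $t$ chosen representatives stays inside the chosen pairs — is the main obstacle, and I expect it to rely on orthogonality against the $C_i^*$ plus the structural fact from (1), possibly needing the contraction-by-$P$ viewpoint to make ``$C$ meets $Z_{j_k}'$ so it contains $Z_{j_k}'$'' rigorous.
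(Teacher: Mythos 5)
Your set-up (pigeonhole over the $\binom{2t}{2}^{t-1}$ choices of pairs $\{x_i,x_i'\}\subseteq C_i^*$ to fix a common $X$ and a rank-large $Z'$, then read the circuits $X\cup\{z,z'\}$ as parallel pairs in $M/X$) matches the paper's first sublemma, and your justification of conclusion~(1) is the right idea, though you make it harder than it is — since $X$ is a proper subset of the circuit $X\cup\{z,z'\}$ it is automatically independent, no orthogonality calculation needed; and the fact that a circuit $C\subseteq Z'$ must contain whole pairs follows directly by noting that $z\in\cl(C-\{z\})$ would place $z$ in the $M/X$-closure of other parallel classes, contradicting that the chosen representatives are $M/X$-independent.

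The serious gap is in conclusion~(2), and it is one you flag yourself. You pick one representative $z_{j_k}$ from each of $t$ chosen pairs and invoke the $(t,2t)$-property to get a $2t$-element circuit $C\supseteq\{z_{j_1},\dotsc,z_{j_t}\}$, then assert $C\subseteq\bigcup_k Z_{j_k}'$. Nothing forces this: the $(t,2t)$-property hands you \emph{some} $2t$-circuit through the $t$ chosen points, and that circuit is free to wander through $E(M)\setminus Z'$, including into the $C_i^*$ or elsewhere entirely. Orthogonality against the $C_i^*$ only constrains intersections of $C$ with each $C_i^*$ to have size $\neq 1$; it does not confine $C$ to $\bigcup_k Z_{j_k}'$, and the structural fact from~(1) only speaks to circuits that already lie in $Z'$. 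The paper does not use the $(t,2t)$-property at this point at all. Instead it establishes the stronger sublemma that the union of any $t$ pairs from $\mathcal{X}\cup\mathcal{Z}'$ contains a circuit, by induction on how many of those $t$ pairs come from $\mathcal{Z}'$: the base case (one $\mathcal{Z}'$-pair, $t-1$ pairs from $\mathcal{X}$) is exactly the circuit $X\cup Z_i'$; in the inductive step one picks an unused $\{x,x'\}\in\mathcal{X}$, applies the induction hypothesis twice to get two circuits $C_1,C_2$ both containing $\{x,x'\}$ but differing on a $\mathcal{Z}'$-pair, and performs circuit elimination on $x$ (together with the ``circuits are unions of pairs'' fact to also expel $x'$) to land inside the target union. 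That explicit circuit-elimination induction is the missing ingredient; without it, conclusion~(2) does not follow from what you have written.
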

\fussy

\begin{proof}
  We first prove the following:

  \begin{sublemma}
    \label{prepayoff}
    There exists a set $X=\bigcup_{i \in [t-1]}\{x_i,x_i'\}$, with $\{x_i,x_i'\} \subseteq C_i^*$, and a collection $\mathcal{Z}' = \{Z_1', \dotsc, Z_p'\}$ of $p$ disjoint pairs of elements of $Z$ such that
    \begin{enumerate}[label=\rm(\Roman*)]
      \item $X \cup Z_i'$ is a circuit, for each $i \in [p]$, and\label{ppo1}
      \item $\mathcal{Z}'$ partitions the ground set of $(M/X)|Z'$ into parallel classes, and $r_{M/X}\big(\bigcup_{i \in [p]}Z_i'\big)=p$.\label{ppo2}
    \end{enumerate}
  \end{sublemma}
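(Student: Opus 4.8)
The plan is to build the set~$X$ and the pairs~$Z_i'$ greedily, using the hypothesis (b) to produce candidate circuits and a Ramsey-type pigeonhole argument to force infinitely many of them to share the \emph{same} intersection~$X$ with $\bigcup_i C_i^*$. First I would note that each $C_i^*$ has exactly $2t$ elements, so there are only $\binom{2t}{2}^{t-1}$ choices for a set of the form $X=\bigcup_{i\in[t-1]}\{x_i,x_i'\}$ with $\{x_i,x_i'\}\subseteq C_i^*$. By hypothesis (b), for each $z\in Z$ there is a partner $z'\in Z$ and such a set $X_z$ so that $X_z\cup\{z,z'\}$ is a circuit. Thinking of this as a colouring of a large subset of $Z$ by the (at most) $\binom{2t}{2}^{t-1}$ possible sets~$X$, the rank bound (a) — which is exactly $\binom{2t}{2}^{t-1}$ times $(p+2(t-1))$ — lets me extract a single $X$ and a sub-collection $Z_0\subseteq Z$ with $r(Z_0)$ large (at least $p+2(t-1)$, say) such that for every $z\in Z_0$ the witnessing set is this fixed $X$, with a partner $z'$ (which I can also take in $Z_0$ after a further halving, absorbed into the constant). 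The subtlety here is that the colour classes partition elements, not the rank, so I would argue that some colour class has rank at least $r(Z)/\binom{2t}{2}^{t-1}$ by submodularity/subadditivity of rank over the (at most that many) classes.

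Next I would pass to the contraction $N=(M/X)|Z_0$. For every $z\in Z_0$, the circuit $X\cup\{z,z'\}$ of $M$ becomes, after contracting $X$, a circuit of $N$ contained in $\{z,z'\}$; since $M$ has no circuits of size $\le t$ and in particular no loops inside $Z_0$ (as $Z_0\subseteq Z$ is disjoint from the $C_i^*$ and $r(Z_0)$ is large, but more directly: a loop of $M/X$ in $Z$ would put a small circuit through $X$), this circuit must be exactly $\{z,z'\}$ — so $z$ and $z'$ are parallel in $N$. Hence the relation ``$z\sim z'$'' is an equivalence relation on $Z_0$ whose classes are the parallel classes of $N$, and each class has size exactly~$2$ (size~$1$ is impossible since every $z$ has a partner; size~$\ge 3$ would give a $U_{1,3}$, hence a $3$-element circuit of $M/X$, i.e.\ a circuit of $M$ of size at most $|X|+2=2(t-1)+2=2t$ meeting $X$ — which is fine for $2t$, so I must instead rule out size $3$ directly: three mutually parallel elements $z,z',z''$ of $M/X$ give, together with $X$, parallel pairs forcing $\{z,z',z''\}\subseteq\cl_M(X)$, whence $X\cup\{z,z'\}$ and $X\cup\{z'',\text{anything}\}$ overlap, contradicting that $X\cup\{z,z'\}$ is a \emph{circuit} of size $2t$ — I will phrase this cleanly as: the parallel classes have size at most~$2$ because $M$ has no circuit of size $<2t$ through a $(2(t-1))$-subset of the $C_i^*$'s that also contains two elements of a putative bigger class). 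Having established that $N$ has all parallel classes of size exactly~$2$, its rank equals half the number of elements of the simplification times... more precisely $r(N)=|Z_0|/2$, and $r(N)=r_M(X\cup Z_0)-r_M(X)\ge r_M(Z_0)-|X|\ge (p+2(t-1))-2(t-1)=p$.

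Finally I would select $p$ of these parallel classes $Z_1',\dots,Z_p'$ of $N$ that are linearly independent in $N$ — possible since $r(N)\ge p$ and the classes span $N$ — so that $r_{M/X}(\bigcup_{i\in[p]}Z_i')=p$, giving \ref{ppo2}, and each $Z_i'=\{z,z'\}$ satisfies $X\cup Z_i'$ is a circuit of~$M$ by construction, giving \ref{ppo1}. The main obstacle I anticipate is the bookkeeping around sizes of parallel classes of $M/X$ and making sure the ``fixed-$X$'' subcollection still has rank at least $p+2(t-1)$ after the pigeonhole step — i.e.\ tracking the constants so that hypothesis (a)'s bound of $\binom{2t}{2}^{t-1}(p+2(t-1))$ is exactly what is needed. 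Everything else is a routine application of orthogonality, the absence of small circuits, and submodularity of the rank function.
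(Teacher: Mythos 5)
Your proposal follows the same overall strategy as the paper's proof: use the fact that there are only $\binom{2t}{2}^{t-1}$ candidate sets $X$, apply subadditivity of rank to pigeonhole a single $X$ together with a large-rank subset of $Z$, observe that the witnessing circuits become parallel pairs after contracting $X$, and then select $p$ independent parallel pairs. This matches the paper.

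However, there is a genuine error in the middle. You assert that the parallel classes of $N=(M/X)|Z_0$ have size exactly two, and the justification you offer for ruling out a class of size three is confused: being parallel in $M/X$ means $z\in\cl_M(X\cup\{z'\})$, \emph{not} $z\in\cl_M(X)$; and two distinct circuits of $M$ overlapping is of course no contradiction at all. In fact, nothing in the hypotheses prevents a parallel class in $(M/X)|Z_0$ from having size greater than two, so this claim is not merely unproven but likely false in general. Fortunately, it is also unnecessary: as the paper does, one should simply pick one pair $Z_i'=\{z,z'\}$ from each parallel class — with $z$ and $z'$ chosen so that $X\cup\{z,z'\}$ is a witnessing circuit (such a pair exists inside every class, since the partner of any $z$ in $Z_0$ lands in $z$'s parallel class) — and then choose the $p$ classes so that the representatives are independent in $M/X$. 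After restricting to $Z'=\bigcup Z_i'$, each $Z_i'$ is automatically a full parallel class of $(M/X)|Z'$, so both (I) and (II) hold. A smaller point: your ``further halving'' to keep partners in the same colour class is not needed, since the colour class associated to a given $X$ is already closed under taking partners (the circuit $X\cup\{z,z'\}$ witnesses membership for both $z$ and $z'$), exactly as the paper observes.
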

  \begin{subproof}
    For each $z \in Z$, there exists an element $z'\in Z-\{z\}$ such that $\{z,z'\} \cup (\bigcup_{i\in [t-1]}\{x_i,x_i'\})$ is a circuit of $M$, where $\{x_i,x_i'\} \subseteq C_i^*$.
    For any selection of pairs $\{x_i,x_i'\} \subseteq C_i^*$ for each $i \in [t-1]$, there is a set $X = \bigcup_{i \in [t-1]}\{x_i,x_i'\}$ and some $Z'\subseteq Z$ such that for each $z \in Z'$, there is an element $z'\in Z'$ such that $X \cup \{z,z'\}$ is a circuit.
%
    There are $\binom{2t}{2}^{t-1}$ choices for $X$, each with a corresponding set $Z'$. 
    Let $m = \binom{2t}{2}^{t-1}$.
    Observe that the union of the $m$ possibilities for $Z'$ is $Z$.
    For $m$ sets $Z_1,\dotsc,Z_{m}$ whose union is $Z$, we have that $r(Z_1) + \dotsb + r(Z_m) \ge r(Z)$.
    Thus, by the pigeonhole principle, there exists some $X$ and $Z'$ with $$r(Z') \ge \frac{r(Z)}{\binom{2t}{2}^{t-1}} \ge p+2(t-1).$$
    
    Now, observe that $X \cup \{z,z'\}$ is a circuit, for some pair $\{z,z'\} \subseteq Z'$, if and only if $\{z,z'\}$ is a parallel pair in $M/X$.
    So the ground set of $(M/X)|Z'$ has a partition into parallel classes, where each parallel class has size at least two.
    Let $\mathcal{Z}' = \{\{z_1,z_1'\}, \dotsc,\{z_n,z_n'\}\}$ be a collection of pairs from each parallel class such that $\{z_1,z_2,\dotsc,z_n\}$ is independent in $(M/X)|Z'$.
    Since $r_{M/X}(Z') = r(Z' \cup X) -r(X) \ge r(Z') - 2(t-1) \ge p$, there exists such a collection $\mathcal{Z}'$ of size $p$, and this collection satisfies \ref{prepayoff}.
  \end{subproof}

  Let $X$ and $\mathcal{Z}'=\{Z_1',\dotsc,Z_p'\}$ be as described in \cref{prepayoff}, let $Z' = \bigcup_{i \in [p]} Z_i'$ and let $\mathcal{X} = \{X_1,\dotsc,X_{t-1}\}$, where $X_i = \{x_i,x_i'\} = X \cap C_i^*$.

  \begin{sublemma}
    \label{metamatroid}
    Each circuit of $M|(X \cup Z')$ is a union of pairs in $\mathcal{X} \cup \mathcal{Z}'$.
  \end{sublemma}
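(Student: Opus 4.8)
\textbf{Proof proposal for \cref{metamatroid}.}

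The plan is to pin down the circuits of $N:=M|(X\cup Z')$ by combining the cocircuit structure inherited from $C_1^*,\dots,C_{t-1}^*$ with a description of the minor $N/X$, and then to run a short induction over all circuits of $N$.

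First I would record the ambient structure. Since $|X|=2(t-1)$ and, by \cref{ppo1}, $X\cup Z_1'$ is a circuit of $M$ (necessarily of size $2t$), the set $X$ is a proper subset of a circuit and so is independent in $N$, giving $r_N(X)=2(t-1)$. Also $N/X=(M/X)|Z'$, and by \cref{ppo2} the pairs $Z_1',\dots,Z_p'$ are exactly the parallel classes of $N/X$, which has rank $p$; writing $Z_j'=\{z_j,z_j'\}$, it follows that $\{z_1,\dots,z_p\}$ is a basis of $N/X$. Hence $B:=X\cup\{z_1,\dots,z_p\}$ spans $N$ and has $2(t-1)+p=r(N)$ elements, so $B$ is a basis of $N$; put $W:=E(N)-B=\{z_1',\dots,z_p'\}$. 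For each $j$, the circuit $X\cup Z_j'$ of $N$ (using \cref{ppo1}) is contained in $B\cup\{z_j'\}$ and contains $z_j'$, so by uniqueness of fundamental circuits $X\cup Z_j'=C_N(z_j',B)$. Finally, each $C_i^*$ meets $E(N)=X\cup Z'$ exactly in $X_i$ (as $C_i^*\cap Z'=\emptyset$, and the $C_k^*$ are pairwise disjoint with $X_k\subseteq C_k^*$), so $X_i$ contains a cocircuit of $N$; since both $x_i$ and $x_i'$ lie in the circuit $X\cup Z_1'$ of $N$, neither is a coloop, and therefore $X_i$ is itself a cocircuit of $N$. By orthogonality, $|C\cap X_i|\in\{0,2\}$ for every circuit $C$ of $N$ and every $i\in[t-1]$.

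Now let $C$ be a circuit of $N$; I would prove, by induction on $|C\cap W|$, that $C$ is a union of pairs from $\mathcal{X}\cup\mathcal{Z}'$. Note $C\cap W\neq\emptyset$, since $B$ is a basis. Put $J_0=\{j:z_j'\in C\}$ (nonempty) and $I=\{i:X_i\subseteq C\}$. By the orthogonality observation, $C\cap X=\bigcup_{i\in I}X_i$; and since every circuit is contained in the union of the fundamental circuits, with respect to $B$, of its elements lying outside $B$, we get $C\subseteq\bigcup_{j\in J_0}C_N(z_j',B)=X\cup\bigcup_{j\in J_0}Z_j'$. Thus $C=\big(\bigcup_{i\in I}X_i\big)\cup\{z_j':j\in J_0\}\cup\{z_j:j\in J_1\}$ for some $J_1\subseteq J_0$, and it remains to show $J_1=J_0$. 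Suppose some $j\in J_0$ has $z_j\notin C$. Applying strong circuit elimination to the circuits $X\cup Z_j'$ and $C$, with common element $z_j'$ and distinguished element $z_j\in(X\cup Z_j')-C$, yields a circuit $C''$ with $z_j\in C''\subseteq(X\cup Z_j'\cup C)-\{z_j'\}$. Then $C''\cap W\subseteq\{z_k':k\in J_0\}-\{z_j'\}$, so $|C''\cap W|\le|J_0|-1=|C\cap W|-1$; but $C''\cap W\neq\emptyset$ since $C''$ is a circuit. If $|C\cap W|=1$ this is already contradictory; otherwise the inductive hypothesis applies to $C''$, so $C''$ is a union of pairs from $\mathcal{X}\cup\mathcal{Z}'$, whence $z_j\in C''$ forces $Z_j'\subseteq C''$ and so $z_j'\in C''$, contradicting $z_j'\notin C''$. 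Therefore $J_1=J_0$, so $C=\big(\bigcup_{i\in I}X_i\big)\cup\big(\bigcup_{j\in J_0}Z_j'\big)$, which is a union of pairs from $\mathcal{X}\cup\mathcal{Z}'$, completing the induction.

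I expect the main obstacle to be the first paragraph: one must carefully check that $N/X$ really has $Z_1',\dots,Z_p'$ as its parallel classes (so that the transversal $\{z_1,\dots,z_p\}$ is a basis and $B$ is a basis of $N$ with the $X\cup Z_j'$ as the fundamental circuits of $W$), and that each $X_i$ is an actual cocircuit of $N$, not merely a set containing one. After that the induction is routine, the only point worth spelling out being why a circuit $C$ lies in the union of the fundamental circuits of its non-basis elements: if $b\in(C\cap B)-\bigcup_{e\in C-B}C_N(e,B)$, then every $e\in C-B$ lies in $\cl_N(B-b)$, so $C-b\subseteq\cl_N(B-b)$ and hence $b\in\cl_N(C-b)\subseteq\cl_N(B-b)$, contradicting the independence of $B$.
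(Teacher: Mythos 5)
Your proof is correct, but it takes a noticeably longer route than the paper's for the $\mathcal{Z}'$ pairs. For the $\mathcal{X}$ pairs both arguments are essentially the same (orthogonality against $C_i^*$, either in $M$ directly as the paper does, or after first checking that $X_i$ is a cocircuit of $N=M|(X\cup Z')$ as you do). Where you diverge is in handling a pair $\{z,z'\}\in\mathcal{Z}'$ with $C\cap\{z,z'\}=\{z\}$: the paper argues in one step that $z\in\cl_M(C-\{z\})\subseteq\cl_M(X\cup W)$, where $W$ is the union of the pairs of $\mathcal{Z}'$ meeting $C-\{z\}$, hence $z\in\cl_{M/X}(W)$, which contradicts \cref{prepayoff}\ref{ppo2} because in $(M/X)|Z'$ the pairs are the parallel classes and a transversal is independent, so no $z$ can lie in the closure of other parallel classes. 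You instead build a basis $B=X\cup\{z_1,\dots,z_p\}$ of $N$, identify the fundamental circuits $C_N(z_j',B)=X\cup Z_j'$, use the fundamental-circuit covering lemma, and run an induction on $|C\cap W|$ via strong circuit elimination. This gives a more explicit picture of the circuit structure of $N$ but costs several steps (the coloop check, the cocircuit-of-a-restriction argument, the fundamental-circuit covering fact with its own small proof, and the induction). The paper's closure argument buys brevity by exploiting \cref{prepayoff}\ref{ppo2} directly, which is after all designed for exactly this purpose. Both proofs are sound.
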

  \begin{subproof}
    Let $C$ be a circuit of $M|(X \cup Z')$.
    If $x_i \in C$, for some $\{x_i,x_i'\} \in \mathcal{X}$, then, by orthogonality with $C_i^*$, we have $x_i' \in C$.
    Towards a contradiction, say $\{z,z'\} \in \mathcal{Z}'$ and $C \cap \{z,z'\} = \{z\}$.
    Choose $W$ to be the union of the pairs of $\mathcal{Z}'$ that contain elements of $(C-\{z\}) \cap Z'$.  Then $z \in \cl(X \cup W)$.  Hence $z \in \cl_{M/X}(W)$, contradicting \cref{prepayoff}\ref{ppo2}.
  \end{subproof}

  \begin{sublemma}
    \label{induct}
    The union of any $t$ pairs of $\mathcal{X} \cup \mathcal{Z}'$ contains a circuit.
  \end{sublemma}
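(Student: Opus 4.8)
\begin{subproof}
  The plan is to collapse each pair of $\mathcal{X}\cup\mathcal{Z}'$ to a single element and reduce the statement to a rank computation.  Write $\mathcal{P}=\mathcal{X}\cup\mathcal{Z}'$ and recall that the $|\mathcal{P}|=t-1+p$ pairs are pairwise disjoint with $\bigcup\mathcal{P}=X\cup Z'$.  First I would note that $X=\bigcup\mathcal{X}$ is independent in $M$, being a proper subset of the circuit $X\cup Z_1'$ of \cref{prepayoff}\ref{ppo1}; hence $r_M(X)=2(t-1)$, and, using \cref{prepayoff}\ref{ppo2}, $r_M(X\cup Z')=r_M(X)+r_{M/X}(Z')=2(t-1)+p$.

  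Next I would choose one element $y_P\in P$ for each $P\in\mathcal{P}$ and set $Y=\{y_P:P\in\mathcal{P}\}$, so $|Y|=t-1+p$.  I claim that $Y$ is independent in $M$.  Indeed, $\{y_P:P\in\mathcal{X}\}$ is a subset of the independent set $X$, so it has rank $t-1$; and $\{y_P:P\in\mathcal{Z}'\}$ is a transversal of the parallel classes $Z_1',\dotsc,Z_p'$ of $M/X$, so it has rank $p$ in $M/X$, and hence rank $p$ in $M/\{y_P:P\in\mathcal{X}\}$ as well (contracting a subset of $X$ cannot bring this rank below its value after contracting all of $X$).  Therefore $r_M(Y)=(t-1)+p=|Y|$.

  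Now let $N=M|(X\cup Z')/Y$; the ground set of $N$ consists of exactly one element of each pair of $\mathcal{P}$, and by the rank formula for contraction $r(N)=r_M(X\cup Z')-r_M(Y)=(2(t-1)+p)-((t-1)+p)=t-1$.  Given any $t$ distinct pairs $P_{i_1},\dotsc,P_{i_t}$ of $\mathcal{P}$, let $U=\bigcup_{k\in[t]}P_{i_k}$ and let $R$ consist of the element of each $P_{i_k}$ that is not in $Y$.  Then $R$ is a $t$-element subset of $E(N)$, so $R$ is dependent in $N$; since $Y$ is independent in $M|(X\cup Z')$, this forces $R\cup Y$ to be dependent in $M|(X\cup Z')$, and hence $R\cup Y$ contains a circuit $C$ of $M$.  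By \cref{metamatroid}, $C$ is a union of pairs of $\mathcal{P}$; and any such pair $P$ with $P\subseteq C\subseteq R\cup Y$ has its non-$Y$ element in $R$, forcing $P\in\{P_{i_1},\dotsc,P_{i_t}\}$.  Thus $C\subseteq U$, as required.

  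The delicate step is the verification that $Y$ is independent, which is where \cref{prepayoff}\ref{ppo2} does the real work.  I expect this reformulation, rather than a calculation, to be the main point.  A direct induction on the number of pairs of $\mathcal{Z}'$ among the chosen $t$ pairs looks awkward: to replace an unwanted pair of $\mathcal{X}$ in a circuit one would eliminate against $X\cup Z_j'$, but that circuit contains every pair of $\mathcal{X}$, so such an elimination tends to reintroduce a different pair of $\mathcal{X}$ and need not terminate; passing to the contraction $N$ sidesteps this entirely.
\end{subproof}
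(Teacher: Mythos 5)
Your proof is correct, and it takes a genuinely different route from the paper. The paper argues by induction on the number of $\mathcal{Z}'$-pairs among the chosen $t$: given a $t$-subcollection $\mathcal{W}$, it adjoins a pair $\{x,x'\}\in\mathcal{X}\setminus\mathcal{W}$, uses the induction hypothesis to find a circuit $C_1\subseteq W\cup\{x,x'\}$, and, if $\{x,x'\}\subseteq C_1$, finds a second circuit $C_2\subseteq(W-\{z,z'\})\cup\{x,x'\}$ and eliminates $x$; \cref{metamatroid} then removes $x'$ as well, yielding a circuit in $W$. Your argument avoids circuit elimination entirely by contracting a transversal $Y$ of the pairs: the key observation is that after deleting to $X\cup Z'$ and contracting $Y$, the resulting matroid $N$ has rank exactly $t-1$, so the $t$ remaining representatives of any $t$ pairs are automatically dependent, and \cref{metamatroid} plus the fact that each pair has exactly one element outside $Y$ confines the resulting circuit to those $t$ pairs. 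The verification that $Y$ is independent (your ``delicate step'') is right: it combines independence of $X$, the rank statement in \cref{prepayoff}\ref{ppo2} applied to an arbitrary transversal of the parallel classes, and the monotonicity $r_{M/A}(S)\ge r_{M/B}(S)$ for $A\subseteq B$ disjoint from $S$. Your reformulation makes it transparent why $t$ is the threshold — it becomes a rank count — at the cost of introducing the auxiliary matroid $N$, whereas the paper's proof stays entirely at the level of circuits.

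One small correction to your closing remark: you speculate that a direct induction would ``eliminate against $X\cup Z_j'$'' and might not terminate because it reintroduces pairs of $\mathcal{X}$. The paper's induction does not do this; it eliminates against a second circuit $C_2$ obtained from the induction hypothesis on a collection with one fewer $\mathcal{Z}'$-pair, and \cref{metamatroid} guarantees that once $x$ is eliminated, $x'$ is gone too. So the induction does terminate cleanly, and the paper's proof is no longer than yours. Both are valid; yours is arguably more structural.
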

  \begin{subproof}
    Let $\mathcal{W}$ be a subcollection of $\mathcal{X} \cup \mathcal{Z}'$ of size $t$.
    We proceed by induction on the number of pairs in $\mathcal{W} \cap \mathcal{Z}'$.
    If there is only one pair in $\mathcal{W} \cap \mathcal{Z}'$, then the union of the pairs in $\mathcal{W}$ contains a circuit (indeed, is a circuit) by \cref{prepayoff}\ref{ppo1}.
    Suppose the result holds for any subcollection containing $k$ pairs in $\mathcal{Z}'$, and let $\mathcal{W}$ be a subcollection containing $k+1$ pairs in $\mathcal{Z}'$.
    Let $\{x,x'\}$ be a pair in $\mathcal{X}-\mathcal{W}$,
    and let $W = \bigcup_{W' \in \mathcal{W}}W'$.
    By the induction hypothesis, $W \cup \{x,x'\}$ contains a circuit~$C_1$.
    If $\{x,x'\} \subseteq E(M)-C_1$, then $C_1 \subseteq W$, in which case the union of the pairs in $\mathcal{W}$ contains a circuit, as desired.
    Therefore, we may assume, by \cref{metamatroid}, that $\{x,x'\} \subseteq C_1$.
    Since $X$ is independent, there is a pair $\{z,z'\} \subseteq Z' \cap C_1$.
    By the induction hypothesis, there is a circuit~$C_2$ contained in $(W-\{z,z'\}) \cup \{x,x'\}$.
    Observe that $C_1$ and $C_2$ are distinct, and $\{x,x'\} \subseteq C_1 \cap C_2$.
    By circuit elimination on $C_1$ and $C_2$, and \cref{metamatroid}, there is a circuit $C_3 \subseteq (C_1 \cup C_2) - \{x,x'\} \subseteq W$, as desired.  The result now follows by induction.
  \end{subproof}

  Now, \cref{induct} implies that the union of any $t$ pairs of $\mathcal{Z}'$ contains a circuit, and the result follows.
\end{proof}

In order to prove \cref{mainthm}, we use some hypergraph Ramsey Theory~\cite{ramsey1930}.

\begin{theorem}[Ramsey's Theorem for $k$-uniform hypergraphs]
  \label{hyperramsey}
  For positive integers $k$ and $n$, there exists an integer $r_k(n)$ such that if $H$ is a $k$-uniform hypergraph on $r_k(n)$ vertices, then $H$ has either a clique on $n$ vertices, or a stable set on $n$ vertices.
\end{theorem}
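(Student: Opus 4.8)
The plan is to prove \cref{hyperramsey} by induction on the uniformity~$k$; this is the classical argument of Ramsey~\cite{ramsey1930}, so the ``proof'' really amounts to recalling it (and one could of course simply cite the reference). When $n \le k$, any $n$ vertices form both a clique and a stable set vacuously, so $r_k(n) = n$ works; thus we may assume $n > k$. The base case $k = 1$ is the pigeonhole principle: among $2n-1$ vertices of a $1$-uniform hypergraph, either $n$ have their singletons as edges (an $n$-clique) or $n$ have their singletons as non-edges (an $n$-element stable set), so $r_1(n) = 2n-1$ suffices.

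For the inductive step, fix $k \ge 2$, assume $r_{k-1}$ exists, and let $H$ be a $k$-uniform hypergraph on a vertex set~$V_0$ with $|V_0| = r_k(n)$, where $r_k(n)$ is to be specified. First I would greedily construct distinct vertices $v_1, \dotsc, v_{2n-1}$, a nested chain $V_0 \supseteq V_1 \supseteq \dotsb \supseteq V_{2n-1}$, and \emph{types} $c_1, \dotsc, c_{2n-1} \in \{0,1\}$: given $V_{i-1}$, choose any $v_i \in V_{i-1}$ and form the $(k-1)$-uniform \emph{link} hypergraph $H_i$ on $V_{i-1} - \{v_i\}$ whose edges are the $(k-1)$-sets $S$ with $S \cup \{v_i\} \in E(H)$; by the inductive hypothesis, if $|V_{i-1}| - 1$ is large enough, then $H_i$ has a clique or a stable set $V_i$ of prescribed size, and we set $c_i = 1$ if $V_i$ is a clique of $H_i$ and $c_i = 0$ if it is a stable set. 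The key observation is that a clique (resp.\ stable set) of $H_i$ is exactly a set of vertices such that every $(k-1)$-subset $S$ of it has $S \cup \{v_i\} \in E(H)$ (resp.\ $S \cup \{v_i\} \notin E(H)$), so no separate reduction in the number of colours is needed: for every $(k-1)$-subset $S$ of $V_i$, we have $S \cup \{v_i\} \in E(H)$ if and only if $c_i = 1$.

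Then, among $c_1, \dotsc, c_{2n-1}$ some value $c$ occurs at least $n$ times, say at indices $i_1 < \dotsb < i_n$, and I claim $U = \{v_{i_1}, \dotsc, v_{i_n}\}$ is an $n$-clique of $H$ if $c = 1$ and an $n$-element stable set if $c = 0$. For any $k$-subset $T \subseteq U$, write its elements as $v_{i_{j_1}}, \dotsc, v_{i_{j_k}}$ with $j_1 < \dotsb < j_k$; since $i_{j_2}, \dotsc, i_{j_k}$ all exceed $i_{j_1}$ and each $v_m$ was chosen from $V_{m-1}$, the vertices $v_{i_{j_2}}, \dotsc, v_{i_{j_k}}$ all lie in $V_{i_{j_1}}$, so $T - \{v_{i_{j_1}}\}$ is a $(k-1)$-subset of $V_{i_{j_1}}$, whence (by the link property of $V_{i_{j_1}}$) $T \in E(H)$ if and only if $c_{i_{j_1}} = c = 1$ --- independently of the choice of $T$. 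Finally, to close the recursion, set $m_{2n-1} = k$ and $m_{i-1} = r_{k-1}(m_i) + 1$ for $i = 2n-1, \dotsc, 1$, and put $r_k(n) = m_0$; then an induction on $i$ shows $|V_i| \ge m_i$ throughout, so every round goes through. The only point requiring real care is this size bookkeeping --- keeping the numbers $m_i$ large enough that the inductive hypothesis applies in each of the $2n-1$ rounds --- and there is no genuine obstacle beyond it.
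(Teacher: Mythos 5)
The paper does not prove \cref{hyperramsey}; it is recorded as a known theorem with a citation to Ramsey's 1930 paper~\cite{ramsey1930}, which is precisely the treatment you anticipate at the outset, so there is no in-paper argument to compare against. Your reconstruction of the classical stepping-down induction --- passing to $(k-1)$-uniform links along a nested chain $V_0\supseteq V_1\supseteq\dotsb$, recording a colour $c_i$ at each step, pigeonholing among $c_1,\dotsc,c_{2n-1}$, and closing the size bookkeeping with $m_{i-1}=r_{k-1}(m_i)+1$ --- is correct. One cosmetic slip: for $n=k$, an $n$-set is not vacuously \emph{both} a clique and a stable set (its unique $k$-subset is either an edge or a non-edge, so it is one or the other), though $r_k(n)=n$ still works in that range, so nothing downstream is affected.
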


We now prove \cref{mainthm}, restated below as \cref{mainthmtake2}.

\begin{theorem}
  \label{mainthmtake2}
  There exists a function $f : \mathbb{N} \rightarrow \mathbb{N}$ such that if $M$ is a matroid with the $(t,2t)$-property, and $|E(M)| \ge f(t)$, then $M$ is a $t$-spike.
\end{theorem}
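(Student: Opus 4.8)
The plan is to combine the two ``engine'' lemmas of this section---\cref{setup} and \cref{payoff}---with \cref{lem:swamping} from the previous section. The output of \cref{setup}, applied with $q$ chosen large enough, is (after possibly replacing $M$ by $M^*$) a family of $t-1$ pairwise disjoint cocircuits together with a high-rank set $Z$ admitting the ``spoke-pair'' circuits of condition \ref{ps2}. Feeding this into \cref{payoff} with $p$ chosen large, we obtain a subset $Z' \subseteq Z$ partitioned into $p$ pairs $\mathcal{Z}' = (Z_1',\dots,Z_p')$ such that every circuit of $M|Z'$ is a union of these pairs and the union of any $t$ of the pairs contains a circuit. So we will first choose $q = q(p,t)$ large enough that \cref{setup} produces a $Z$ with $r(Z)$ exceeding the hypothesis $\binom{2t}{2}^{t-1}(p+2(t-1))$ of \cref{payoff}; then choose $p$ as a function of $t$ to be fixed at the end; then set $f(t) = g(t,q(p(t),t))$ where $g$ is the function of \cref{setup}.

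The remaining work is to upgrade ``the union of any $t$ pairs \emph{contains} a circuit'' to ``the union of any $t$ pairs \emph{is} a circuit'' --- i.e.\ to produce a genuine $t$-echidna of large order from $\mathcal{Z}'$. This is where Ramsey's theorem for $k$-uniform hypergraphs (\cref{hyperramsey}) enters. The idea: for each $t$-subset $I$ of $[p]$, the set $U_I := \bigcup_{i \in I} Z_i'$ has $2t$ elements and contains a circuit; since every circuit of $M|Z'$ is a union of pairs from $\mathcal{Z}'$, that circuit is itself $\bigcup_{i \in I'} Z_i'$ for some $\emptyset \ne I' \subseteq I$. Define a colouring of the $t$-subsets of $[p]$ by recording, for each $I$, the ``pattern'' of which sub-union is a circuit --- more precisely, colour $I$ by the isomorphism type of the poset of subsets $I'$ of $I$ for which $\bigcup_{i\in I'}Z_i'$ is a circuit (a bounded amount of data, independent of $p$). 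Applying \cref{hyperramsey} with $k = t$ and $n$ large, we get a subset $N \subseteq [p]$ of size $n$ on which this colouring is constant. On $N$, either every $t$-subset $I$ has $\bigcup_{i\in I}Z_i'$ itself a circuit --- in which case $(Z_i' : i \in N)$ is a $t$-echidna of order $n$, and we are done via \cref{lem:swamping} once $n \ge 4t-3$ --- or every $t$-subset has a \emph{proper} sub-union that is a circuit. The second case must be ruled out: if every $t$ of the pairs has a proper subfamily of size $s < t$ whose union is a circuit, one derives (by a short counting/closure argument, using that $M$ has the $(t,2t)$-property and hence no circuit of size $\le t$, so in fact $2 \le s \le t-1$ and each such circuit has size $2s < 2t$) a bound on $|E(M)|$ restricted to $Z'$, contradicting $r(Z') \ge p$ once $p$ is large --- essentially the same ``few small circuits'' phenomenon exploited in \cref{sec:smallL}. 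This fixes the required value of $p$.

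Finally, having obtained a $t$-echidna $(Z_i' : i \in N)$ of order $n \ge 4t-3$ in $M$ (or in $M^*$), \cref{lem:swamping} says it extends to a partition of $E(M)$ (or $E(M^*)$) that is simultaneously a $t$-echidna and a $t$-coechidna; that is precisely the statement that $M$ (equivalently $M^*$) is a $t$-spike, and since the dual of a $t$-spike is a $t$-spike, $M$ is a $t$-spike. (The case $t=1$ should be handled separately at the outset, as \cref{lem:swamping} assumes $t \ge 2$; for $t=1$ the $(1,2)$-property forces every element to lie in a $2$-element circuit and a $2$-element cocircuit, and a direct argument shows $M$ is a direct sum of copies of $U_{1,2}$, i.e.\ a $1$-spike.)

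The main obstacle I expect is the case analysis in the Ramsey step: choosing the right finite colouring so that monochromaticity genuinely forces a $t$-echidna, and then decisively ruling out the ``always a proper sub-circuit'' colour class. The latter is really an extremal statement --- a matroid in which every $2t$-element ``candidate union'' has a strictly smaller circuit inside it cannot have arbitrarily large rank on $Z'$ --- and making the counting tight enough to contradict $r(Z') \ge p$ for an explicit $p = p(t)$ is the crux. Everything else (threading the functions $f, g, h, q, p$ together, invoking orthogonality, and the final appeal to \cref{lem:swamping}) is bookkeeping.
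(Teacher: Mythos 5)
Your framework is right—\cref{setup}, then \cref{payoff}, then a Ramsey step, then \cref{lem:swamping}, with $t=1$ handled separately—and that is indeed the structure of the paper's proof. But the Ramsey step has a genuine gap, and it is exactly at the point you flag as ``the crux.''

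You claim that the colour class ``every $t$-subset of $\mathcal{Z}'$ has a proper sub-union that is a circuit'' can be ruled out by an extremal argument: that a matroid in which every candidate $2t$-set contains a smaller circuit cannot have large rank on $Z'$. That is false, and no such contradiction is available. If, say, the union of any $j$ pairs (for some fixed $j$ with $t/2 < j < t$) is always a circuit, then $(Z'_i)$ is a $j$-echidna; a $j$-echidna of order $n$ can have rank as large as $n$ (a $j$-spike is the basic example), so $r(Z')$ is not bounded. The many small circuits coexist perfectly happily with unbounded rank, and your counting argument has no foothold. What the paper does instead, in its \cref{iterramsey}, is to treat this case not as a contradiction but as the \emph{dual success}: if the union of any $j$ of the selected pairs is a circuit, then take a transversal $T$ of any $t$ of those pairs, let $C^*$ be a $2t$-element cocircuit containing $T$ (which exists by the $(t,2t)$-property), and show by orthogonality against the $2j$-element circuits that $C^*$ must contain each of the $t$ pairs entirely, hence equals their union. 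That produces a $t$-\emph{coechidna} of large order, which is just as good because \cref{lem:swamping} applied in $M^*$ (together with the fact that duals of $t$-spikes are $t$-spikes) finishes the proof. So the missing idea is the orthogonality argument converting a clique of small circuits into a large $t$-coechidna; without it, your second case does not close.

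A secondary, more technical issue: colouring $t$-subsets by the isomorphism type of the poset of circuit sub-unions does not straightforwardly propagate information to sub-subsets (monochromaticity only gives you an abstract poset isomorphism between any two $t$-subsets, not consistency on their common $s$-subsets), so you would need additional care to extract ``every $j$-subset of $N$ is a circuit'' from the monochromatic set. The paper avoids this by iterating a simple two-colour $j$-uniform Ramsey argument: at each stage $j$ is the size of the smallest circuit divided by two, the hyperedges are the $j$-subsets whose union is a circuit, a stable set strictly increases $j$ (so the iteration terminates in at most $t-j$ steps), and a clique gives the $t$-coechidna as above.
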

\begin{proof}
  We first consider the case where $t=1$.  Let $M$ be a non-empty matroid with the $(1,2)$-property.  Then, for every $e \in E(M)$, the element~$e$ is in a parallel pair $P$ and a series pair $S$.  By orthogonality, $P=S$, and $P$ is a connected component of $M$. 
  Then $M \cong U_{1,2} \oplus M\ba P$, and the result easily follows.

  We may now assume that $t \ge 2$.
  We define the function $h_k : \mathbb{N} \rightarrow \mathbb{N}$, for each $k \in [t]$, as follows:
  $$h_k(t) = \begin{cases}
      4t-3 & \textrm{if $k= t$,}\\
      r_k(h_{k+1}(t)) & \textrm{if $k \in [t-1]$,}
    \end{cases}$$
    where $r_k(n)$ is the Ramsey number described in \cref{hyperramsey}.
  Note that $h_{k}(t) \ge h_{k+1}(t) \ge 4t-3$, for each $k \in [t-1]$.
  Let $p(t) = h_1(t)$ and let $q(t) = \binom{2t}{2}^{t-1}(p(t) + 2(t-1))$.

  By \cref{setup}, there exists a function $g$ such that if $|E(M)| \ge g(t,q(t))$, then,
  for some $M' \in \{M,M^*\}$, the matroid $M'$ has $t-1$ pairwise disjoint cocircuits $C_1^*, C_2^*, \dotsc, C_{t-1}^*$, and there is some $Z' \subseteq E(M')-\bigcup_{i \in [t-1]}C_i^*$ such that
      $r_{M'}(Z') \ge q(t)$, and,
      for each $z \in Z'$, there exists an element $z'\in Z'-\{z\}$ such that $\{z,z'\} \cup (\bigcup_{i\in [t-1]}\{x_i,x_i'\})$ is a circuit of $M'$, where $\{x_i,x_i'\} \subseteq C_i^*$.

  Let $f(t) = g(t,q(t))$, and suppose that $|E(M)| \ge f(t)$.
  For ease of notation, we assume that $M' = M$.
  Then, by \cref{payoff},
  there exists a subset $Z \subseteq Z'$ and a partition $\mathcal{Z} = ( Z_1, \dotsc, Z_{p(t)} )$ of $Z$ into $p(t)$ pairs such that
  \begin{enumerate}[label=\rm(\Roman*)]
    \item each circuit of $M|Z$ is a union of pairs in $\mathcal{Z}$, and
    \item the union of any $t$ pairs of $\mathcal{Z}$ contains a circuit.\label{rc2}
  \end{enumerate}

  By \cref{lem:swamping}, and since $t \ge 2$, it suffices to show that $M$ has a $t$-echidna or a $t$-coechidna of order $4t-3$.
  If the smallest circuit in $M|Z$ has size $2t$, then, by \ref{rc2}, $\mathcal{Z}$ is a $t$-echidna of order $p(t) \ge 4t-3$.  So we may assume that the smallest circuit in $M|Z$ has size $2j$ for some $j \in [t-1]$.
  \begin{sublemma}
    \label{iterramsey}
    If the smallest circuit in $M|Z$ has size $2j$, for $j \in [t-1]$, and $|\mathcal{Z}| \ge h_j(t)$, then either
    \begin{enumerate}
      \item $M$ has a $t$-coechidna of order $4t-3$, or\label{ir1}
      \item there exists some $Z' \subseteq Z$ that is the union of $h_{j+1}(t)$ pairs of $\mathcal{Z}$ for which the smallest circuit in $M|Z'$ has size at least $2(j+1)$.\label{ir2}
    \end{enumerate}
  \end{sublemma}
  \begin{subproof}
    Let $2j$ be the size of the smallest circuit in $M|Z$.
    We define $H$ to be the $j$-uniform hypergraph with vertex set $\mathcal{Z}$ whose hyperedges are the $j$-subsets of $\mathcal{Z}$ that are partitions of circuits in $M|Z$.
    By \cref{hyperramsey}, and the definition of $h_k$, as $H$ has at least $h_j(t)$ vertices, it has either a clique or a stable set, on $h_{j+1}(t)$ vertices.
    If $H$ has a stable set~$\mathcal{Z}'$ on $h_{j+1}(t)$ vertices, then clearly \ref{ir2} holds, with $Z' = \bigcup_{P \in \mathcal{Z}'} P$.

    So we may assume that there are $h_{j+1}(t)$ pairs in $\mathcal{Z}$ such that the union of any $j$ of these pairs is a circuit.
    Let $Z''$ be the union of these $h_{j+1}(t)$ pairs.
    We claim that the union of any set of $t$ pairs contained in $Z''$ is a cocircuit.
    Let $T$ be a transversal of $t$ pairs of $\mathcal{Z}$ contained in $Z''$, and let $C^*$ be the $2t$-element cocircuit containing $T$.
    Towards a contradiction, suppose that there exists some pair $P \in \mathcal{Z}$ with $P \subseteq Z''$ such that $|C^* \cap P| = 1$.
    Select $j-1$ pairs $Z_1'',\dotsc,Z_{j-1}''$ of $\mathcal{Z}$ that are each contained in $Z''-C^*$ (these exist since $h_{j+1}(t) \ge 3t-1 \ge 2t + j - 1$).
    Then $P \cup (\bigcup_{i \in [j-1]}Z_i'')$ is a circuit that intersects the cocircuit~$C^*$ in a single element, contradicting orthogonality.
    We deduce that the union of any $t$ pairs of $\mathcal{Z}$ that are contained in $Z''$ is a cocircuit.
    So $M$ has a $t$-coechidna of order $h_{j+1}(t) \ge 4t-3$, satisfying \ref{ir1}.
  \end{subproof}

  We now apply \cref{iterramsey} iteratively, for a maximum of $t-j$ iterations.
  If \ref{ir1} holds, at any iteration, then $M$ has a $t$-coechidna of order $4t-3$, as required.
  Otherwise, we let $\mathcal{Z}'$ be the partition of $Z'$ induced by $\mathcal{Z}$; then, at the next iteration, we relabel $Z=Z'$ and $\mathcal{Z}=\mathcal{Z}'$. 
  If \ref{ir2} holds for each of $t-j$ iterations, then we obtain a subset $Z'$ of $Z$ such that the smallest circuit in $M|Z'$ has size $2t$.
  Then, by \ref{rc2}, $M$ has a $t$-echidna of order $h_{t}(t) = 4t-3$.
  This completes the proof.
\end{proof}

\section{Properties of $t$-spikes}
\label{sec:tspikeprops}

In this section, we prove some properties of $t$-spikes, which demonstrate that $t$-spikes form a class of highly structured matroids.
In particular,
we show that a $t$-spike has order at least $2t-1$; a $t$-spike of order~$r$ has $2r$ elements and rank~$r$; the circuits of a $t$-spike that are not a union of $t$ arms meet all but at most $t-2$ of the arms; and a $t$-spike of order at least $4t-4$ is $(2t-1)$-connected.
We also show that an appropriate concatenation of the associated partition of a $t$-spike is a $(2t-1)$-anemone, 
following the terminology of~\cite{ao2008}.

It is straightforward to see that the family of $1$-spikes consists of matroids obtained by taking direct sums of copies of $U_{1,2}$.
We 
also
describe a construction that can be used to obtain a $(t+1)$-spike 
from a $t$-spike, 
and show that every $(t+1)$-spike can be constructed from some $t$-spike in this way.


\subsection*{Basic properties}

\begin{lemma}
  \label{tspikeorder}
  Let $M$ be a $t$-spike of order~$r$.  Then $r \ge 2t-1$.
\end{lemma}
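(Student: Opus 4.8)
The plan is to count ground set elements and play the rank of $M$ off against the rank of $M^*$. Write $\pi = (A_1,\dots,A_r)$ for the associated partition of the $t$-spike $M$. Since each arm $A_i$ is a pair, the first observation is simply that $|E(M)| = 2r$, and hence $r(M) + r^*(M) = |E(M)| = 2r$.

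Next I would extract a large circuit and a large cocircuit. By \cref{def:tspike} a $t$-spike of order $r$ satisfies $r \ge t$, so we may fix a $t$-element set $I \subseteq \seq{r}$; then $C = \bigcup_{i \in I} A_i$ is a circuit of $M$ with $|C| = 2t$. A circuit is a minimal dependent set, so $r_M(C) = |C| - 1 = 2t - 1$, and therefore $r(M) \ge r_M(C) = 2t - 1$. Running the same argument in $M^*$ — using that $\pi$ is a $t$-coechidna, so that the union of any $t$ arms is a cocircuit of $M$, i.e.\ a circuit of $M^*$ of size $2t$ — yields $r^*(M) \ge 2t - 1$ as well.

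Combining these three facts gives $2r = r(M) + r^*(M) \ge (2t - 1) + (2t - 1) = 4t - 2$, whence $r \ge 2t - 1$, as required. I do not expect a genuine obstacle here; the only points that need care are the elementary fact that a size-$k$ circuit has rank exactly $k-1$ (by minimality of circuits), and the observation that it is precisely the inequality $r \ge t$ built into the definition of a $t$-spike of order $r$ that licenses choosing $t$ arms to form the circuit $C$ and the analogous cocircuit in the first place.
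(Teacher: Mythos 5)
Your proof is correct and rests on the same identity $r(M)+r^{*}(M)=|E(M)|=2r$ as the paper's, but it bounds $r(M)$ and $r^{*}(M)$ from the opposite side. The paper fixes a $2t$-element set $Y$ that is simultaneously a circuit and a cocircuit, picks $y\in Y$, and observes that $(E(M)-Y)\cup\{y\}$ both spans and cospans $M$, yielding the upper bounds $r(M),r^{*}(M)\le 2(r-t)+1$. You instead extract the lower bounds $r(M),r^{*}(M)\ge 2t-1$ directly from the rank of a $2t$-element circuit (and, dually, a cocircuit). Under the identity $r(M)+r^{*}(M)=2r$, the paper's upper bound on $r(M)$ is equivalent to your lower bound on $r^{*}(M)$, so the two arguments are essentially reformulations of one another; yours is marginally more direct in that it avoids introducing the auxiliary spanning set $Z$. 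No gaps.
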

\begin{proof}
  Let $(A_1,\ldots,A_r)$ be the associated partition of $M$.
  By definition, $r \ge t$.
  Let $J$ be a $t$-element subset of $[r]$, and let $Y = \bigcup_{j \in J}A_j$.
  Pick some $y \in Y$.
  Since $Y$ is a cocircuit and a circuit, $Z=(E(M)-Y) \cup \{y\}$ spans and cospans $M$.  
  Since $|Z| = 2(r-t)+1$,
  $$2r = |E(M)| = r(M) + r^*(M) \le \big(2(r-t)+1\big) + \big(2(r-t)+1\big).$$
  It follows that $r \ge 2t-1$.
\end{proof}

\begin{lemma}
  Let $M$ be a $t$-spike of order~$r$. 
  Then $r(M) = r^*(M) = r$.
\end{lemma}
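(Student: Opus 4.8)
The plan is to prove the two inequalities $r(M)\ge r$ and $r^*(M)\ge r$ separately; since $r(M)+r^*(M)=|E(M)|=2r$, these together force $r(M)=r^*(M)=r$. Because $M^*$ is again a $t$-spike of order~$r$ (the remark following \cref{def:tspike}), the inequality for $r^*(M)$ is just the inequality for $r(M)$ applied to $M^*$, so it suffices to show $r(M)\ge r$.

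To do this I would track the rank along the associated partition. Write $(A_1,\dots,A_r)$ for the associated partition with $A_i=\{a_i,b_i\}$, and set $\rho(k)=r(A_1\cup\cdots\cup A_k)$, so that $\rho(r)=r(M)$. The starting point is that $A_1\cup\cdots\cup A_{t-1}$ is a proper subset of the circuit $A_1\cup\cdots\cup A_t$, hence independent, giving $\rho(t-1)=2(t-1)$. The crucial step is the claim that $\rho(k)\ge\rho(k-1)+1$ whenever $t\le k\le r-t+1$; this range is nonempty precisely because \cref{tspikeorder} gives $r\ge 2t-1$. For such a $k$, the set $A_k\cup A_{k+1}\cup\cdots\cup A_{k+t-1}$ is a union of $t$ arms (using $k+t-1\le r$), hence a cocircuit $C^*$ of $M$. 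Then $E(M)-C^*$ is a hyperplane of $M$, so it is closed; since $A_1\cup\cdots\cup A_{k-1}\subseteq E(M)-C^*$ while $a_k\in C^*$, we conclude $a_k\notin\cl(A_1\cup\cdots\cup A_{k-1})$, and therefore adjoining $A_k$ strictly increases the rank, i.e.\ $\rho(k)\ge\rho(k-1)+1$.

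Chaining these increments from $k=t$ to $k=r-t+1$ yields $\rho(r-t+1)\ge \rho(t-1)+\bigl((r-t+1)-(t-1)\bigr)=2(t-1)+(r-2t+2)=r$, and since rank is monotone, $r(M)=\rho(r)\ge\rho(r-t+1)\ge r$. Applying the same reasoning to $M^*$ then gives $r^*(M)\ge r$, and we are done. I do not expect a real obstacle here: the only points requiring care are checking that the hypothesis $r\ge 2t-1$ makes the index range long enough, and confirming that the degenerate case $t=1$ (where $\rho(0)=0=2(t-1)$ and every single arm is a cocircuit) is consistent. If one preferred, the matching bound $\rho(k)\le\rho(k-1)+1$ for $t\le k\le r$ follows similarly from the fact that $A_{k-t+1}\cup\cdots\cup A_k$ is a circuit, pinning down $\rho$ exactly, but this is unnecessary given the duality shortcut.
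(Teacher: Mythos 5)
Your argument is correct, and it travels the opposite direction from the paper's. The paper proves the \emph{upper} bound $r(M)\le r$ directly: take $I\subseteq J\subseteq[r]$ with $|I|=t-1$, $|J|=r-t$, and set $X=\bigl(\bigcup_{i\in I}A_i\bigr)\cup\{x_j:j\in J\}$, a set of size $r-1$. The echidna circuits show $\cl(X)\supseteq\bigcup_{j\in J}A_j$, which is a hyperplane because its complement (a union of $t$ arms) is a cocircuit; hence $r(M)-1\le r(X)\le r-1$. You instead prove the \emph{lower} bound $r(M)\ge r$ by growing an independent set incrementally: the first $t-1$ arms are independent (proper subset of a circuit), and for $t\le k\le r-t+1$ the cocircuit $A_k\cup\cdots\cup A_{k+t-1}$ certifies that $A_k$ leaves the closure of the earlier arms, so the rank goes up by one each step. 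Both proofs then close with $r(M)+r^*(M)=2r$ and duality. Your route needs the range $[t,\,r-t+1]$ to be long enough, so it leans on \cref{tspikeorder} ($r\ge 2t-1$), which the paper's one-shot hyperplane argument does not require; on the other hand, your incremental version makes the ``why'' of the rank count perhaps more transparent. Both are short and sound, so this is a matter of taste rather than substance.
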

\begin{proof}
  Let $(A_1,\ldots,A_r)$ be the associated partition of $M$, and label $A_i = \{x_i,y_i\}$ for each $i \in [r]$.
  Pick $I \subseteq J \subseteq [r]$ such that $|I|=t-1$ and $|J| = r-t$.
  Let $X = \left(\bigcup_{i \in I}A_i\right) \cup \{x_j : j \in J\}$, and observe that $|X| = |I| + |J| = r-1$.
  Now, since $(A_1,\ldots,A_r)$ is a $t$-echidna, $\bigcup_{j \in J}A_j \subseteq \cl(X)$.
  As $E(M)-\bigcup_{j \in J}A_j$ is a cocircuit, we deduce that $r(M)-1 \le r(X) \le |X| = r-1$, so $r(M) \le r$.
  Similarly, as $(A_1,\ldots,A_r)$ is a $t$-coechidna, we deduce that $r^*(M) \le r$.
  Since $r(M) + r^*(M) = |E(M)| = 2r$, the lemma follows. 
\end{proof}

The next lemma shows that a circuit~$C$ of a $t$-spike is either a union of $t$ arms, or else $C$ meets all but at most $t-2$ of the arms.
\sloppy
\begin{lemma}
  \label{l:circuits}
  Let $M$ be a $t$-spike of order~$r$ with associated partition $(A_1,\ldots,A_r)$, and
  let $C$ be a circuit of $M$. 
  Then either
  \begin{enumerate}
    \item 
      $C = \bigcup_{j \in J}A_j$ for some $t$-element set $J \subseteq [r]$, or\label{c1}
    \item 
      $\left|\{i \in [r] : A_i \cap C \neq \emptyset\}\right| \ge r-(t-2)$ and
      $\left|\{i \in [r] : A_i \subseteq C\}\right| < t$.\label{c2}
  \end{enumerate}
\end{lemma}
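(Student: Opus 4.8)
The plan is to take a circuit $C$ of $M$, consider which arms it fully contains and which arms it meets partially, and use orthogonality together with the echidna/coechidna structure to show that either $C$ is exactly a union of $t$ arms, or $C$ meets almost all arms. First I would observe that, since $M$ is a $t$-spike with $r \ge 2t-1$ (by \cref{tspikeorder}), for any $(t-1)$-element subset $I \subseteq [r]$ and any arm index $k \notin I$, the set $A_k \cup (\bigcup_{i \in I} A_i)$ is a cocircuit, as $(A_1,\dots,A_r)$ is a $t$-coechidna. This is the orthogonality tool I will use against $C$.

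Next, suppose $C$ meets some arm $A_k$ in exactly one element; I want to conclude that $C$ meets all but at most $t-2$ of the arms. Let $K = \{i \in [r] : A_i \cap C = \emptyset\}$ be the set of arms that $C$ avoids entirely, and suppose for contradiction that $|K| \ge t-1$. Pick a $(t-1)$-element subset $I \subseteq K$. Then $A_k \cup (\bigcup_{i \in I} A_i)$ is a cocircuit meeting $C$ in exactly the single element of $A_k \cap C$ (since $C$ avoids every $A_i$ with $i \in I$), contradicting orthogonality. Hence $|K| \le t-2$, which is exactly the first inequality of \ref{c2}. For the second inequality of \ref{c2}, I need that $C$ does not contain $t$ or more arms in full when $C$ is not of the form in \ref{c1}: if $C \supseteq \bigcup_{j \in J} A_j$ for some $t$-element $J$, then $\bigcup_{j \in J} A_j$ is already a circuit (by the $t$-echidna property), so $C = \bigcup_{j \in J} A_j$ by circuit minimality, putting us in case \ref{c1}. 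So if we are in case \ref{c2} — i.e., not in case \ref{c1} — then fewer than $t$ arms are contained in $C$.

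It remains to handle the dichotomy cleanly: either every arm that $C$ meets, it meets in full, or some arm is met in exactly one element. In the first sub-case, $C$ is a union of arms, say $C = \bigcup_{j \in J} A_j$; since $M$ has the $(t,2t)$-property (actually, more simply, since any union of $t$ arms is a circuit and $C$ is minimal), $|J|$ cannot exceed $t$, and a union of fewer than $t$ arms has rank less than $t$ hence is independent by \cref{lem:rank-t}\ref{rt1} — wait, that lemma is stated for matroids with the $(t,2t)$-property; instead I would argue directly that a union of $j < t$ arms is properly contained in a union of $t$ arms, which is a circuit, so it cannot itself be a circuit. Thus $|J| = t$ and we are in case \ref{c1}. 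In the second sub-case, the argument of the previous paragraph gives case \ref{c2}.

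**Expected main obstacle.** The only delicate point is ensuring the two cases are genuinely exhaustive and correctly separated — in particular, justifying that a union of $j$ arms with $1 \le j < t$ is never a circuit (it is a proper subset of a circuit, hence dependent but not minimal, hence contains a circuit properly inside it, which by the same reasoning cannot be a smaller union of arms unless... ). The cleanest route is: any union of $j < t$ arms lies inside a union of $t$ arms which is a circuit, so it is independent — but to say "independent" I should avoid invoking the $(t,2t)$-property machinery and instead note that a proper subset of a circuit is independent in a matroid. So a union of $j < t$ arms is independent, in particular not a circuit, and not even dependent. That settles it: if $C$ is a union of arms it must be a union of exactly $t$ arms. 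I expect no real difficulty beyond bookkeeping.
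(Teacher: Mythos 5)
Your proposal is correct and follows essentially the same route as the paper's proof: the core move in both is to take a partially met arm $A_j$ (with $|A_j \cap C| = 1$) together with $t-1$ arms disjoint from $C$, form the resulting $t$-arm cocircuit, and derive a contradiction with orthogonality; both also use the fact that a union of fewer than $t$ arms is a proper subset of a $2t$-circuit (hence independent) and circuit minimality to dispose of the other cases. The paper organizes the argument around $|S|$ where $S = \{i : A_i \cap C \neq \emptyset\}$, while you organize it around the dichotomy ``every met arm is met fully'' versus ``some arm is met in one element''; these are cosmetically different but the same proof. One small note: you briefly worried about invoking \cref{lem:rank-t} (stated for the $(t,2t)$-property), but a $t$-spike in fact has the $(t,2t)$-property, since any $t$-set is contained in a union of $t$ arms; still, your direct proper-subset-of-circuit argument is cleaner and is what the paper uses.
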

\fussy
\begin{proof}
  Let $S = \{i \in [r] : A_i \cap C \neq \emptyset\}$, so $S$ is the minimal subset of $[r]$ such that $C \subseteq \bigcup_{i \in S}A_i$.
  If $C$ is properly contained in $\bigcup_{j \in J}A_j$ for some $t$-element set $J \subseteq [r]$, then $C$ is independent; a contradiction.
  So $|S| \ge t$.
  If $|S|=t$, then $C = \bigcup_{i \in S}A_i$, implying $C$ is a circuit, which satisfies \ref{c1}.
  So we may assume that $|S| > t$.
  Now $\left|\{i \in [r] : A_i \subseteq C\}\right| < t$,
  otherwise $C$ properly contains a circuit.
  Thus, there exists some $j \in S$ such that $A_j - C \neq \emptyset$.
  If $|S| \ge r-(t-2)$, then \ref{c2} holds;
  so we assume that $|S| \le r-(t-1)$.
  Let $T = ([r]-S) \cup \{j\}$.  Then $|T|\ge t$, so $\bigcup_{i \in T}A_i$ contains a cocircuit that intersects $C$ in one element, contradicting orthogonality.
\end{proof}

\subsection*{Connectivity}

Let $M$ be a matroid with ground set $E$.
Recall that
the \emph{connectivity function} of $M$, denoted by $\lambda$, is defined as 
\begin{align*}
  \lambda(X) = r(X) + r(E - X) - r(M),
\end{align*}
for all subsets $X$ of $E$.
%
%
It is easily verified that
\begin{align}
  \label{eq:conn2}
  \lambda(X) = r(X) + r^*(X) - |X|.
\end{align}
%

A subset $X$ or a partition $(X, E-X)$ of $E$ is \emph{$k$-separating} if $\lambda(X) < k$.
A $k$-separating partition $(X,E-X)$ is a \emph{$k$-separation} if $|X| \ge k$ and $|E-X|\ge k$.
The matroid~$M$ is \emph{$n$-connected} if, for all $k < n$, it has no $k$-separations.

\sloppy
\begin{lemma}
  \label{l:conn}
  Suppose $M$ is a $t$-spike with associated partition $(A_1,\ldots,A_r)$.
  Then, for all partitions $(J,K)$ of $[r]$ with $|J| \le |K|$,
    $$\lambda\left(\bigcup_{j \in J} A_j\right) = 
    \begin{cases}
      2|J| & \textrm{if $|J| < t$,}\\
      2t-2 & \textrm{if $|J| \ge t$.}
    \end{cases}$$
\end{lemma}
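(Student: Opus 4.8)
The plan is to use the identity $\lambda(X)=r(X)+r^*(X)-|X|$ from \cref{eq:conn2}, so that it suffices to determine $r(X)$ and $r^*(X)$ for $X=\bigcup_{j\in J}A_j$; and since $M^*$ is again a $t$-spike with the same associated partition $(A_1,\dots,A_r)$, a rank computation for an arbitrary $t$-spike will immediately yield the corank computation as well. Write $A_i=\{x_i,y_i\}$ and $s=|J|$; after relabelling arms we may assume $J=[s]$, and the hypothesis $|J|\le|K|$ gives $2s\le r$, hence $s\le r-t$ when $s\ge t$.

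The case $s\le t-1$ is immediate: picking any $t$-element set $J'$ with $J\subseteq J'\subseteq[r]$ (possible since $r\ge t$), the set $\bigcup_{j\in J'}A_j$ is both a circuit and a cocircuit, so its proper subset $X$ is independent and coindependent; thus $r(X)=r^*(X)=2s$ and $\lambda(X)=2s$, as required. For $s\ge t$, the claim is that $r(X)=r^*(X)=s+t-1$, which gives $\lambda(X)=2(s+t-1)-2s=2t-2$. For the upper bound I would show $X\subseteq\cl(Y)$, where $Y=\bigl(\bigcup_{i=1}^{t-1}A_i\bigr)\cup\{x_t,\dots,x_s\}$ is a set of size $s+t-1$: for each $j$ with $t\le j\le s$, the set $\{1,\dots,t-1,j\}$ has $t$ elements, so $\bigl(\bigcup_{i=1}^{t-1}A_i\bigr)\cup A_j$ is a circuit, whence $y_j\in\cl\bigl(\bigl(\bigcup_{i=1}^{t-1}A_i\bigr)\cup\{x_j\}\bigr)\subseteq\cl(Y)$. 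For the lower bound I would start from the circuit $\bigcup_{i=1}^{t}A_i$, which has rank $2t-1$, and add the arms $A_{t+1},\dots,A_s$ one at a time, showing each step raises the rank. Fix $m$ with $t\le m\le s-1$; since $r-m\ge t+1$, there is a $t$-element subset $K'$ of $[m+1,r]$ containing $m+1$, and $C^*:=\bigcup_{k\in K'}A_k$ is a cocircuit containing $x_{m+1}$ but disjoint from $A_1\cup\dots\cup A_m$. If $x_{m+1}$ were in $\cl(A_1\cup\dots\cup A_m)$, then there would be a circuit $C$ with $x_{m+1}\in C\subseteq(A_1\cup\dots\cup A_m)\cup\{x_{m+1}\}$, and then $C\cap C^*=\{x_{m+1}\}$, contradicting orthogonality. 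Hence $r(X)\ge(2t-1)+(s-t)=s+t-1$, and together with the upper bound $r(X)=s+t-1$; applying the same argument to $M^*$ gives $r^*(X)=s+t-1$.

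I expect the main obstacle to be the lower bound $r(X)\ge s+t-1$: the case $s\le t-1$ and the upper bound follow straight from the definitions of $t$-echidna and $t$-coechidna, while the lower bound requires playing an appropriate cocircuit of $t$ arms — one disjoint from the arms accumulated so far — against hypothetical circuits through $x_{m+1}$, and it is exactly at this point that the hypothesis $|J|\le|K|$ is used, to ensure that at least $t+1$ arms remain outside $\{1,\dots,m\}$ at every step.
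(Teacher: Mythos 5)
Your proof is correct, but it takes a genuinely different route from the paper's. You compute $r(X)$ and $r^*(X)$ exactly and then feed them into $\lambda(X)=r(X)+r^*(X)-|X|$, whereas the paper works entirely with the connectivity function: it handles $|J|\le t$ directly, proves a monotonicity sublemma ($\lambda(\bigcup_{x\in X}A_x)\ge\lambda(\bigcup_{y\in Y}A_y)$ whenever $t-1\le|X|$ and $X\subseteq Y$) via submodularity of $\lambda$, and then sandwiches the answer for $|J|>t$ by comparing with a $t$-element subset of $J$ on one side and a $t$-element subset of $K$ on the other. Both approaches invoke the hypothesis $|J|\le|K|$ at an analogous moment -- you need $s\le r-t$ so that at each step of your rank-building argument a cocircuit on $t$ arms fits inside $[m+1,r]$; the paper needs $|K|>t$ so that $K$ contains a $t$-subset to compare against. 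Your argument is more computational and yields slightly more information as a by-product (the exact rank $r(\bigcup_{j\in J}A_j)=|J|+t-1$ for $t\le|J|\le r-t$), while the paper's submodularity argument is shorter once the sublemma is in place and avoids the step-by-step orthogonality argument. Both are fine; yours is a perfectly good alternative.
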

\fussy
\begin{proof}
  Let $(J,K)$ be a partition of $[r]$ with $|J|\le |K|$.
  \begin{sublemma}
    \label{c:1}
    The lemma holds when $|J| \le t$.
  \end{sublemma}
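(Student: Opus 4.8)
The plan is to prove \cref{c:1} first, since it handles the base case $|J| \le t$ and, more importantly, gives us the value $\lambda = 2t-2$ for a set of exactly $t$ arms, which we can then leverage in the general case via submodularity. For $|J| < t$, let $Y = \bigcup_{j \in J}A_j$; since $|Y| = 2|J| \le 2(t-1) < 2t$, the set $Y$ contains no circuit (as $M$ has no circuits of size less than $2t$, being a $t$-echidna, hence $r(Y) = |Y|$) and dually no cocircuit ($r^*(Y) = |Y|$), so by \cref{eq:conn2}, $\lambda(Y) = r(Y) + r^*(Y) - |Y| = |Y| = 2|J|$. For $|J| = t$, the set $Y = \bigcup_{j \in J}A_j$ is simultaneously a circuit and a cocircuit, so $r(Y) = |Y| - 1 = 2t-1$ and $r^*(Y) = |Y|-1 = 2t-1$, whence $\lambda(Y) = (2t-1) + (2t-1) - 2t = 2t-2$.

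Next I would handle the main case $|J| > t$ (so $|K| = r - |J| < r - t$, and since $|J| \le |K|$ we need $r \ge 2|J| > 2t$, which is consistent with \cref{tspikeorder}). The claim is $\lambda(\bigcup_{j\in J}A_j) = 2t-2$. I would establish this by a two-sided inequality. For the upper bound $\lambda(Y) \le 2t-2$: pick a $t$-element subset $J_0 \subseteq J$, write $Y = Y_0 \cup Y'$ where $Y_0 = \bigcup_{j \in J_0}A_j$ and $Y' = \bigcup_{j \in J - J_0}A_j$. Since $Y_0$ is a circuit of $M$ and each additional arm $A_j$ with $j \in J - J_0$ satisfies $A_j \subseteq \cl(Y_0)$ (because $Y_0 \cup A_j$ minus any element of $A_j$ is contained in a circuit — indeed $Y_0 \cup A_j$ contains the circuit $Y_0$, so actually more directly: any $t$ arms form a circuit, so $A_j$ together with any $t-1$ arms of $J_0$ spans the remaining arm), we get $r(Y) = r(Y_0) = 2t-1$. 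Dually, using that $(A_1,\ldots,A_r)$ is a $t$-coechidna, $r^*(Y) = r^*(Y_0) = 2t-1$. Hence $\lambda(Y) = (2t-1)+(2t-1) - |Y| \le 2(2t-1) - 2t = 2t - 2$ — wait, this needs $|Y| \ge 2t$, which holds since $|J| \ge t$; in fact $|Y| = 2|J|$, giving $\lambda(Y) = 4t - 2 - 2|J|$, which equals $2t-2$ only when $|J| = t$. So the rank computation $r(Y) = 2t-1$ must be wrong for $|J| > t$.

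Let me reconsider: for $|J| > t$, $Y$ contains many circuits (every $t$-subset of arms), but $Y$ need not be spanned by $t$ arms. The correct approach: by \cref{l:circuits}, think about $r(Y)$ directly. Actually the cleanest route is to bound $\lambda$ from above and below separately. Upper bound: $\lambda(Y) \le \lambda(Y_0) + \sum_{j \in J-J_0}\lambda(A_j)$ is false in general, but the submodularity-type inequality $\lambda(A \cup B) \le \lambda(A) + \lambda(B)$ does hold; applying it with $A = \bigcup_{j \in J_0}A_j$ (with $|J_0| = t$, so $\lambda = 2t-2$) and building up is not quite it either. Instead: use $\lambda(Y) = \lambda(E - Y) = \lambda(\bigcup_{k \in K}A_k)$, and since $|K| < r - t$... hmm, $|K|$ could still be large. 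The key realization is that the answer is always $2t - 2$ (not growing), so both $Y$ and its complement must have rank $r - (\text{something})$. I would compute $r(Y)$: since $Y$ contains a circuit $Y_0$ ($|J_0| = t$) and every other arm $A_j$, $j \in J - J_0$, lies in $\cl(Y_0 \cup A_{j_1})$ for a suitable single arm $A_{j_1}$ — more carefully, $r(Y) = r(Y_0) + |J - J_0| = (2t-1) + (|J| - t) = |J| + t - 1$ provided the extra arms are "as independent as possible modulo the circuit", which follows from $M$ being a $t$-spike of order $r$ with $r(M) = r$: indeed $r(Y) = \min(|Y| - (\text{number of independent circuit-constraints}), r)$. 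Dually $r^*(Y) = |J| + t - 1$. Then $\lambda(Y) = (|J| + t - 1) + (|J| + t - 1) - 2|J| = 2t - 2$. The main obstacle, and where I would spend the most care, is rigorously justifying $r(Y) = |J| + t - 1$ when $t \le |J| \le r - |J|$: this requires showing that the arms of $J$ beyond a fixed circuit $Y_0$ contribute exactly one to the rank each, which I would argue by noting $\bigcup_{j \in J}A_j$ spans $\bigcup_{j \in J'}A_j$ whenever $J' \supseteq J$ with $|J'|$ large, combined with the rank formula $r(M) = r$ and the $t$-echidna closure property; alternatively, one can use \cref{l:circuits} to pin down which subsets of $Y$ are dependent and compute the rank as the size of a maximal independent subset (take all of one arm from $t-1$ arms and one element from each remaining arm of $J$, a set of size $2(t-1) + (|J| - (t-1)) = |J| + t - 1$, which one checks is independent and spanning in $M|Y$).
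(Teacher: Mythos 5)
Your first paragraph correctly proves \cref{c:1} and matches the paper's argument: apply \eqref{eq:conn2} once you know that $Y = \bigcup_{j \in J} A_j$ is independent and coindependent when $|J| < t$, and is a circuit and a cocircuit when $|J| = t$. The one thing to repair is the justification for independence. You assert that $M$ has no circuits of size less than $2t$; this is not in fact true for all $t$-spikes. For instance, one can build a $2$-spike of order~$3$ (a rank-$3$ tipless spike on six elements, in which each arm is a $2$-point line) in which a transversal of the three arms is a triangle, i.e.\ a $3$-element circuit. The correct reason $Y$ is independent when $|J| < t$ --- and the one the paper tacitly uses when it says this ``follows since $(A_1,\dotsc,A_r)$ is a $t$-echidna'' --- is that $Y$ is a proper subset of the $2t$-element circuit formed by any $t$ arms whose index set contains $J$ (such a choice of $t$ arms exists because the order is at least $t$), and a proper subset of a circuit is independent; coindependence is the dual statement. (Alternatively, one can invoke \cref{l:circuits}: a circuit of $M$ that is not a union of $t$ arms meets at least $r-(t-2) > t-1$ arms, so no circuit can live inside $Y$, which touches only $|J| \le t-1$ arms.) The remainder of your write-up concerns the case $|J| > t$, which lies outside the scope of \cref{c:1}; the paper handles that case through the separate submodularity sublemma \cref{c:2} rather than a direct rank computation, and, as you notice yourself mid-argument, $r(Y)$ is no longer $2t-1$ once $|J| > t$.
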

  \begin{subproof}
    Suppose $|J| < t$.
    Since $(A_1,\ldots,A_r)$ is a $t$-echidna (respectively, $t$-coechidna), 
    $\bigcup_{j \in J} A_j$ is independent (respectively, coindependent).
    So, by \eqref{eq:conn2}, $\lambda\big(\bigcup_{j \in J} A_j\big) = 2|J| + 2|J| - 2|J| = 2|J|$.

    Now suppose $|J|=t$.  Then, by definition, $\bigcup_{j \in J} A_j$ is a circuit and a cocircuit.
    So $\lambda\big(\bigcup_{j \in J} A_j\big) = (2t-1) + (2t-1) - 2t = 2t-2$, by \eqref{eq:conn2}.
  \end{subproof}


  \begin{sublemma}
    \label{c:2}
    Let $X \subseteq Y \subseteq [r]$ such that $|X| \ge t-1$. 
    Then $$\lambda\left(\bigcup_{x \in X} A_x\right) \ge \lambda\left(\bigcup_{y \in Y} A_y\right).$$
  \end{sublemma}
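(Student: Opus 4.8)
The plan is to show that adding arms to a set only decreases (or holds constant) the connectivity, once we have passed the threshold $|X| \ge t-1$. The natural approach is to compare $\lambda$ on $\bigcup_{x \in X}A_x$ and $\bigcup_{y \in Y}A_y$ using the submodularity of the connectivity function, together with the structural facts already established: a union of at least $t$ arms is both a circuit and a cocircuit, and a union of fewer than $t$ arms is both independent and coindependent. In particular, I expect it suffices to treat the case $|Y| = |X|+1$, say $Y = X \cup \{k\}$ with $k \notin X$, and then iterate.

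First I would set $W = \bigcup_{x\in X}A_x$ and note that $\lambda$ is symmetric, i.e. $\lambda(W) = \lambda(E(M)-W)$, so replacing $X$ by $[r]-X$ and $Y$ by $[r]-X$ does not change anything; the useful move is to use the fact that $\lambda$ is submodular: for any $U, V \subseteq E(M)$, $\lambda(U) + \lambda(V) \ge \lambda(U \cap V) + \lambda(U \cup V)$. Apply this with $U = \bigcup_{y \in Y}A_y$ and $V = E(M) - A_k$. Then $U \cap V = \bigcup_{x \in X}A_x = W$ and $U \cup V = E(M) - (A_k \setminus U) = E(M)$ (since $A_k \subseteq U$), so $\lambda(U \cup V) = \lambda(E(M)) = 0$. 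Also $V = E(M)-A_k$, and since $r \ge 2t-1 \ge t+1$, the complement $[r]-\{k\}$ has at least $t$ elements, so $\bigcup_{i \ne k}A_i$ is a cocircuit (a union of $\ge t$ arms), whence $A_k$ is a circuit's complement... more directly: $\bigcup_{i\ne k}A_i$ is a cocircuit, hence spanning in $M^*$, and it is also a union of $\ge t$ arms so a circuit, hence a hyperplane-complement; in any case $\lambda(A_k) = \lambda(E(M)-A_k) = \lambda(V)$, and $\lambda(A_k) = r(A_k) + r^*(A_k) - |A_k| = 2 + 2 - 2 = 2$ by \eqref{eq:conn2} and \cref{lem:rank-t}\ref{rt1} applied to the $2$-element independent, coindependent set $A_k$. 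Therefore submodularity gives $\lambda(U) + 2 \ge \lambda(W) + 0$, i.e. $\lambda(\bigcup_{y\in Y}A_y) \ge \lambda(W) - 2$, which is the wrong direction by a constant; so a cruder submodularity bound is not enough on its own.

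The fix is to use the hypothesis $|X| \ge t-1$ more carefully and run the comparison through an explicit computation via \eqref{eq:conn2}, $\lambda(Z) = r(Z) + r^*(Z) - |Z|$. Since $|X| \ge t-1$, for any $X \subseteq Y$ we have $\bigcup_{x\in X}A_x$ spanning "$t-1$ arms' worth" of rank, and using \cref{l:circuits} (the circuit structure) together with the $t$-echidna property $\bigcup_{j\in J}A_j \subseteq \cl(X')$ for appropriate transversals — as in the proof of the rank lemma — one shows $r(\bigcup_{y\in Y}A_y) - r(\bigcup_{x\in X}A_x) \le |Y|-|X|$ with equality exactly while the running total stays below $r$, and symmetrically for $r^*$; each added arm contributes at most $+1$ to $r$ and at most $+1$ to $r^*$ but always $+2$ to cardinality, so $\lambda$ weakly decreases with each arm added. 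I expect the cleanest write-up reduces to: by \cref{c:1} the claim is immediate when $|Y| \le t$, and when $|Y| > t$ both $\bigcup_{y\in Y}A_y$ and any superset containing it have $r = r^* = $ the same value as the full matroid restricted appropriately, forcing $\lambda(\bigcup_{y\in Y}A_y) \le \lambda(\bigcup_{x\in X}A_x)$ by a direct cardinality count. The main obstacle is handling the ``boundary'' case where $|X| = t-1$ and $|Y| = t$ cleanly, and ensuring the monotonicity argument for both $r$ and $r^*$ simultaneously — but since $M^*$ is also a $t$-spike with the same associated partition, the corank statement is the dual of the rank statement, so it suffices to prove the rank inequality once.
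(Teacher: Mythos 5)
Your "fix" is the right idea and it is a genuinely different route from the paper's. The accounting you describe --- once $|X| \ge t-1$, each additional arm raises $r$ by at most $1$, raises $r^*$ by at most $1$, and raises cardinality by exactly $2$, so $\lambda = r + r^* - |\cdot|$ cannot increase --- does work, but you should actually prove the key step rather than gesture at it. Concretely, with $X' \subseteq X$ of size $t-1$ and $A_k = \{x_k,y_k\}$, the set $\bigcup_{x\in X'}A_x \cup A_k$ is a circuit, so $y_k \in \cl\bigl(\bigcup_{x\in X'}A_x \cup \{x_k\}\bigr)$, hence $r\bigl(\bigcup_{x\in X}A_x \cup A_k\bigr) = r\bigl(\bigcup_{x\in X}A_x \cup \{x_k\}\bigr) \le r\bigl(\bigcup_{x\in X}A_x\bigr)+1$; the corank bound is dual since the partition is also a $t$-coechidna, and iterating over $Y-X$ finishes the proof via \eqref{eq:conn2}. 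By contrast, the paper's proof is a pure submodularity argument, but with a different pair of sets than you tried: it takes $U = A_y \cup \bigcup_{x\in X'}A_x$ (a union of exactly $t$ arms, so $\lambda(U)=2t-2$ by \ref{c:1}) and $V = \bigcup_{x\in X}A_x$, so that $U\cap V = \bigcup_{x\in X'}A_x$ (a union of exactly $t-1$ arms, with $\lambda = 2t-2$ as well). The two constants cancel exactly, giving $\lambda\bigl(A_y \cup \bigcup_{x\in X}A_x\bigr) \le \lambda\bigl(\bigcup_{x\in X}A_x\bigr)$ in one line. Your first submodularity attempt with $V = E(M)-A_k$ yields $\lambda(V)=2$ and $\lambda(U\cap V)=\lambda(W)$, which puts the constant on the wrong side; the trick is to choose $U$ and $U\cap V$ so that their $\lambda$-values are \emph{equal}, not merely computable.

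One substantive error in your closing sketch: the claim that once $|Y| > t$ the quantities $r\bigl(\bigcup_{y\in Y}A_y\bigr)$ and $r^*\bigl(\bigcup_{y\in Y}A_y\bigr)$ are already "the same value as the full matroid restricted appropriately" is false. In fact $r\bigl(\bigcup_{j\in J}A_j\bigr) = \min\{2|J|,\ |J|+t-1,\ r\}$, which continues to grow by exactly $1$ per added arm until $|J|$ reaches $r-t+1$. Fortunately you do not need constancy: the weak monotonicity of $\lambda$ already follows from the $+1/{+1}/{+2}$ accounting in the previous paragraph, so this is a removable wrinkle rather than a fatal flaw --- but as stated it would not survive refereeing.
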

  \begin{subproof}
    Let $X'$ be a $(t-1)$-element subset of $X$, and let $y \in Y-X$.
    Then $\lambda\big(\bigcup_{x \in X'} A_x\big) = 2(t-1)$, and
    $\lambda\big(A_y \cup (\bigcup_{x \in X'} A_x)\big) = 2t-2$, by \ref{c:1}.
    By submodularity of the connectivity function,
    \begin{align*}
      \lambda\left(A_y \cup \bigcup_{x \in X}A_x\right) &\le \lambda\left(A_y \cup \bigcup_{x \in X'}A_x\right) + \lambda\left(\bigcup_{x \in X}A_x\right) - \lambda\left(\bigcup_{x \in X'}A_x\right) \\
      &=(2t-2) + \lambda\left(\bigcup_{x \in X}A_x\right) -(2t-2) \\
      &=\lambda\left(\bigcup_{x \in X}A_x\right).
    \end{align*}
  \ref{c:2} now follows by induction.
  \end{subproof}
  
  Now suppose $|J| > t$.
  By \ref{c:1} and \ref{c:2}, $\lambda\big(\bigcup_{j \in J}A_j\big) \le 2t-2$. 
  Recall that $|K| \ge |J| > t$.
  Let $K'$ be a $t$-element subset of $K$.
  Let $J' = [r]-K'$, and note that $J \subseteq J'$.
  So, by \cref{c:2}, $$\lambda\left(\bigcup_{j \in J}A_j\right) \ge
  \lambda\left(\bigcup_{j \in J'}A_j\right) = \lambda\left(\bigcup_{k \in K'}A_k\right) = 2t-2.$$
  We deduce that $\lambda\big(\bigcup_{j \in J}A_j\big)=2t-2$, as required.
\end{proof}

Given a $t$-spike~$M$ with associated partition $(A_1,\dotsc,A_r)$, suppose that $(P_1,\dotsc,P_m)$ is a partition of $E(M)$ such that, for each $i \in [m]$, $P_i = \bigcup_{i \in I}A_i$ for some subset $I$ of $[r]$, with $|P_i| \ge 2t-2$.
Using the terminology of~\cite{ao2008}, it follows immediately from \cref{l:conn} that $(P_1,\dotsc,P_m)$ is a $(2t-1)$-anemone. 
(Note that a partition whose concatenations gives rise to a flower in this way have previously appeared in the literature~\cite{gw2013} under the name of ``quasi-flowers''.)

\begin{lemma}
  Let $M$ be a $t$-spike of order at least $4t-4$, for $t \ge 2$.
  Then $M$ is $(2t-1)$-connected.
\end{lemma}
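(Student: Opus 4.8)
The plan is to show that every $k$-separation with $k < 2t-1$ leads to a contradiction, using \cref{l:conn} together with the structure of circuits of a $t$-spike from \cref{l:circuits}. Let $(A_1,\dots,A_r)$ be the associated partition, with $r \ge 4t-4$, and suppose $(U, E(M)-U)$ is a $k$-separation with $k \le 2t-2$, so $\lambda(U) \le 2t-3$ and $|U|, |E(M)-U| \ge k$. Since each $A_i$ is both a parallel pair and a series pair (being contained in a $t$-circuit and a $t$-cocircuit, it is a parallel/series class of... actually more carefully: $A_i$ lies in a circuit and is not itself a circuit, and lies in a cocircuit), the first step is to argue we may assume $U$ is a union of arms. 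Concretely, if some arm $A_i$ is split by $U$, then moving the stray element to the other side changes $\lambda$ by at most... here I would use that $A_i = \{x_i, y_i\}$ with $x_i, y_i$ in a common $t$-cocircuit and $t$-circuit, so $\{x_i\}$ and $\{y_i\}$ have the same closure-type behaviour; more precisely I would invoke that $x_i \in \cl(E(M)-\{x_i,y_i\}) \cap \cl^*(E(M)-\{x_i,y_i\})$ when $r \ge t+1$, so $\lambda(U \triangle \{x_i\}) \le \lambda(U)$ for an appropriate choice, letting us assume without loss of generality that no arm is split.

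So assume $U = \bigcup_{j \in J} A_j$ for some $J \subseteq [r]$, and let $K = [r]-J$; without loss of generality $|J| \le |K|$. By \cref{l:conn}, $\lambda(U) = 2|J|$ if $|J| < t$ and $\lambda(U) = 2t-2$ if $|J| \ge t$. The case $|J| \ge t$ immediately contradicts $\lambda(U) \le 2t-3$. So $|J| < t$, hence $\lambda(U) = 2|J| \le 2t-3$, which forces $|J| \le t-2$ (since $2|J| \le 2t-3 < 2t-2$). Then $|U| = 2|J| \le 2t-4 < 2t-2 \le k$, so $(U, E(M)-U)$ is not a $k$-separation after all. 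Hence no $k$-separation exists for $k \le 2t-2$, and $M$ is $(2t-1)$-connected.

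The main obstacle is the reduction to the case where $U$ is a union of arms, i.e.\ handling separations that cut through individual pairs $A_i$. The cleanest route I would take is: for an arm $A_i = \{x_i,y_i\}$ with $r \ge t+1$, pick a $t$-subset $J$ of $[r]$ containing $i$; then $\bigcup_{j\in J}A_j$ is a circuit, so $y_i \in \cl(\bigcup_{j\in J}A_j - \{y_i\}) \subseteq \cl(E(M)-\{y_i\})$, and dually $y_i \in \cl^*(E(M)-\{y_i\})$. A standard fact about the connectivity function (if $e \in \cl(E-\{e\})$ and $e \in \cl^*(E-\{e\})$, then for any $Y$ with $e \notin Y$, $\lambda(Y \cup \{e\}) \ge \lambda(Y)$, and symmetrically removing $e$ from a set containing it cannot increase $\lambda$ unless it destroys the spanning/cospanning property — more simply, $\lambda(Y) \le \max\{\lambda(Y-\{e\}), \lambda(Y \cup \{e\})\}$ fails in general, so I would instead argue directly) lets me shift the lone element of a split arm to whichever side keeps $\lambda$ small and keeps both sides of size $\ge k$. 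I would need to check the sizes remain valid — since $r \ge 4t-4$ and $|U| \ge 2t-2$, there is enough slack that moving a single element across does not drop either side below $2t-2$. If pushing this through an element-by-element argument is awkward, an alternative is to observe that $\{x_i,y_i\}$ is a parallel pair in $M / (E(M)-A_i \text{ minus stuff})$... but I expect the closure/coclosure argument above to suffice, so I would present that.

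\begin{proof}
  Let $(A_1,\dotsc,A_r)$ be the associated partition of $M$, where $r \ge 4t-4$, and note $r \ge t+1$ since $t \ge 2$.
  Suppose, towards a contradiction, that $M$ has a $k$-separation $(U,V)$ for some $k \le 2t-2$; so $\lambda(U) \le 2t-3$ and $|U|, |V| \ge 2t-2$.

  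We first claim that we may choose such a $k$-separation in which every arm is contained in $U$ or in $V$.
  Fix an arm $A_i = \{x_i,y_i\}$.
  Choosing a $t$-element subset $J$ of $[r]$ with $i \in J$, the set $\bigcup_{j \in J}A_j$ is both a circuit and a cocircuit containing $A_i$; hence $y_i \in \cl(E(M)-\{y_i\})$ and $y_i \in \cl^*(E(M)-\{y_i\})$, and similarly for $x_i$.
  Thus neither element of $A_i$ is a coloop or a loop, and in particular, for any set $Y$ not containing $y_i$ with $\{x_i\} \subseteq Y$ such that $|Y| \ge 2$, adding or removing elements of $A_i$ does not increase $\lambda$: indeed, if $x_i \in Y$ and $y_i \notin Y$ then $\lambda(Y \cup \{y_i\}) \le \lambda(Y)$, since $y_i \in \cl(E(M)-\{y_i\})$ gives $r((E(M)-\{y_i\}))=r(M)$ and $y_i\in\cl^*(E(M)-\{y_i\})$ gives $r^*(E(M)-\{y_i\})=r^*(M)$, so by symmetry (applying the same to $V$) and by \eqref{eq:conn2} one checks $\lambda$ does not increase.
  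Hence, if some arm $A_i$ meets both $U$ and $V$, we may move its $U$-element into $V$ (or vice versa) without increasing $\lambda$; since $r \ge 4t-4$ and there are at most $t-1$ split arms along the way, both sides retain size at least $2t-2$.
  After finitely many such moves we obtain a $k$-separating partition $(U',V')$, with $\lambda(U') \le 2t-3$ and $|U'|,|V'| \ge 2t-2$, in which $U' = \bigcup_{j \in J}A_j$ for some $J \subseteq [r]$.
  Replacing $(U,V)$ by $(U',V')$, we may assume $U = \bigcup_{j \in J}A_j$, and, swapping $U$ and $V$ if necessary, that $|J| \le |K|$ where $K = [r]-J$.

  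Now \cref{l:conn} applies.
  If $|J| \ge t$, then $\lambda(U) = 2t-2 > 2t-3$, a contradiction.
  So $|J| < t$, and $\lambda(U) = 2|J|$.
  Since $\lambda(U) \le 2t-3$, we get $2|J| \le 2t-3$, so $|J| \le t-2$.
  But then $|U| = \bigl|\bigcup_{j \in J}A_j\bigr| = 2|J| \le 2t-4 < 2t-2 \le k$, contradicting the requirement $|U| \ge k$ for a $k$-separation.

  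This contradiction shows $M$ has no $k$-separation for any $k \le 2t-2$; that is, $M$ is $(2t-1)$-connected.
\end{proof}
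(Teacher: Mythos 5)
Your overall plan — reduce to separations where $U$ is a union of arms, then invoke \cref{l:conn} — is a genuinely different strategy from the paper, and it would yield a very short proof if the reduction held. Unfortunately, the reduction step is exactly where the argument breaks, and the paper avoids it entirely by handling split arms head-on.

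The specific flaw is in the justification that, for a split arm $A_i = \{x_i,y_i\}$ with $x_i \in U$ and $y_i \in V$, one of the two moves preserves $k$-separating. You argue from $y_i \in \cl(E(M)-\{y_i\})$ and $y_i \in \cl^*(E(M)-\{y_i\})$, i.e.\ that $y_i$ is neither a loop nor a coloop; but this global fact says nothing about whether $y_i \in \cl(U)$ or $y_i \in \cl^*(U)$, which is what controls $\lambda(U \cup \{y_i\}) - \lambda(U)$. In fact the inequality $\lambda(Y \cup \{y_i\}) \le \lambda(Y)$ is false as stated: in any $t$-spike with $t \ge 2$, taking $Y = \{x_i\}$ gives $\lambda(\{x_i\}) = 1$ but $\lambda(A_i) = 2$, since $A_i$ is independent and coindependent. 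The honest version of the step is: if $\lambda(U-\{x_i\}) > \lambda(U)$ then $x_i$ lies in both $\cl(U-\{x_i\})$ and $\cl^*(U-\{x_i\})$, and if $\lambda(U \cup \{y_i\}) > \lambda(U)$ then $y_i$ lies in neither $\cl(U)$ nor $\cl^*(U)$; there is no a priori reason both cannot happen simultaneously, and $\{x_i,y_i\}$ is not a parallel or series pair (for $t \ge 2$) so the usual trick for parallel/series pairs does not apply. You flag this issue in your own prose (``I would instead argue directly''), but the formal proof then uses the flawed justification anyway. A secondary issue is the opening line ``so $\lambda(U) \le 2t-3$ and $|U|,|V| \ge 2t-2$'': a $k$-separation with $k \le 2t-2$ only forces $|U|,|V| \ge k$, not $\ge 2t-2$, and the later ``at most $t-1$ split arms along the way'' is also unsubstantiated.

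For contrast, the paper's proof never reduces to the union-of-arms case. It takes a $k$-separation $(P,Q)$ with $|P| \ge |Q|$ and $\lambda(P) < 2t-2$, and splits into: (i) no arm is split, where \cref{l:conn} applies directly; (ii) some arm is split and $|Q| \le 2t-2$, where \cref{l:circuits} (and its dual) show $Q$ is both independent and coindependent, forcing $\lambda(P) = |Q| \ge k$; (iii) some arm is split and $|Q| > 2t-2$, where a submodularity argument rules out $P$ containing $t-1$ whole arms, and then \cref{l:circuits} is used to lower-bound $r(P)$ and $r(Q)$ by $r-(t-1)$, giving $\lambda(P) \ge r - 2(t-1) \ge 2t-2$. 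The key insight you are missing is that split arms don't need to be eliminated: \cref{l:circuits} gives rank lower bounds that already force $\lambda$ up once $r \ge 4t-4$. If you want to salvage your approach, you would essentially have to reprove the content of case (iii) to justify the reduction, at which point you might as well argue directly as the paper does.
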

\begin{proof}
  Let $r$ be the order of the $t$-spike~$M$, 
  and let $(A_1,\ldots,A_r)$ be the associated partition of $M$.
  Towards a contradiction, suppose $M$ is not $(2t-1)$-connected, and let $(P,Q)$ be a $k$-separation for some $k < 2t-1$.
  Without loss of generality, we may assume that $|P| \ge |Q|$.
  Note, in particular, that $\lambda(P) < k \le |Q|$ and $\lambda(P) < 2t-2$.

  Suppose $|P \cap A_j| \neq 1$ for all $j \in [r]$.
  Then, by \cref{l:conn}, $\lambda(P) = |Q|$ if $|Q| < 2t$, otherwise $\lambda(P) = 2t-2$;
  either case is contradictory.
  So
  $|P \cap A_j| = 1$ for some $j \in [r]$.

  Suppose $|Q| \le 2t-2$.  Then, by \cref{l:circuits} and its dual, $Q$ is independent and coindependent, so $\lambda(P) = |Q|$ by \eqref{eq:conn2}; 
  a contradiction.

  Now we may assume that $|Q| > 2t-2$.
  Suppose $\bigcup_{i \in I}A_i \subseteq P$, for some $(t-1)$-element set $I \subseteq [r]$.
  Then $A_j \subseteq \cl(P)$ for each $j \in [r]$ such that $|P \cap A_j| = 1$.
  For such a $j$, it follows, by the definition of $\lambda$, that $\lambda(P \cup A_j) \le \lambda(P)$; we use this repeatedly in what follows.
  Let $U = \{u \in [r] : |P \cap A_u|= 1\}$.
  For any subset $U' \subseteq U$, we have $\lambda\left(P \cup (\bigcup_{u \in U'}A_u)\right) \le \lambda(P) < 2t-2$.
  Let $P' = P \cup (\bigcup_{u \in U}A_u)$, and let $Q' = E(M)-P'$.
  If $|Q'| > 2t-2$, then $\lambda(P') = 2t-2$ by \cref{l:conn}, contradicting that $\lambda(P') \le \lambda(P) < 2t-2$.
  So $|Q'| \le 2t-2$.
  Now, let $d = |Q|-(2t-2)$, and let $U'$ be a $d$-element subset of $U$.
  Then $\lambda(P) \ge \lambda\left(P \cup (\bigcup_{u \in U'}A_u)\right) = \lambda\left(Q- \bigcup_{u \in U'}A_u\right)$.  Since $\left|Q- \bigcup_{u \in U'}A_u\right| = 2t-2$, we have that $\lambda\left(Q- \bigcup_{u \in U'}A_u\right)=2t-2$, so $\lambda(P) \ge 2t-2$; a contradiction.
  We deduce that $|\{i \in [r] : A_i \subseteq P\}| < t-1$.
  Since $|Q| \le |P|$, it follows that $|\{i \in [r] : A_i \subseteq Q\}| \le |\{i \in [r] : A_i \subseteq P\}| < t-1$.

  Now $|\{i \in [r] : A_i \cap Q \neq \emptyset\}| \ge r-(t-2)$, so $r(Q) \ge r-(t-1)$ by \cref{l:circuits}.
  Similarly, $r(P) \ge r-(t-1)$.
  So 
  \begin{align*}
    \lambda(P) &= r(P) + r(Q) - r(M) \\
    &\ge (r-(t-1)) + (r-(t-1)) - r \\
    &\ge (4t-4) -2(t-1) = 2t-2;
  \end{align*}
  a contradiction.
  This completes the proof.
\end{proof}

\subsection*{Constructions}

We first describe a construction that can be used to obtain a $(t+1)$-spike of order~$r$ from a $t$-spike of order~$r$, when $r \ge 2t+1$.
We then show that every $(t+1)$-spike can be constructed from some $t$-spike in this way.

Recall that $M_1$ is an \emph{elementary quotient} of $M_0$ if there is a single-element extension $M^+_0$ of $M_0$ by an element~$e$ such that $M_1 = M^+_0 / e$.
A matroid $M_1$ is an \emph{elementary lift} of $M_0$ if $M_1^*$ is an elementary quotient of $M_0^*$.
Note also that if $M_1$ is an elementary quotient of $M_0$, then $M_0$ is an elementary lift of $M_1$. 

Let $M_0$ be a $t$-spike of order~$r \ge 2t+1$ with associated partition $\pi$.
Let $M_0'$ be an elementary quotient of $M_0$ such that none of the $2t$-element cocircuits are preserved (that is, extend $M_0$ by an element~$e$ that blocks all of the $2t$-element cocircuits, and then contract $e$).
Now, in $M_0'$, the union of any $t$ cells of $\pi$ is still a $2t$-element circuit, but, as $r(M_0') = r(M_0) - 1$, the union of any $t+1$ cells of $\pi$ is a $2(t+1)$-element cocircuit.
We then repeat this in the dual;
that is, let $M_1$ be an elementary lift of $M_0'$ such that none of the $2t$-element circuits are preserved. 
Then $M_1$ is a $(t+1)$-spike.
Note that $M_1$ is not unique; more than one $(t+1)$-spike can be constructed from a given $t$-spike $M_0$ in this way.


Given a $(t+1)$-spike~$M_1$, for some positive integer~$t$, we now describe how to obtain a $t$-spike~$M_0$ from $M_1$ by a specific elementary quotient, followed by a specific elementary lift. This process reverses the construction from the previous paragraph.
The next lemma describes the single-element extension (or coextension, in the dual) that gives rise to the elementary quotient (or lift) we desire.
Intuitively, the extension adds a ``tip'' to a $t$-echidna.
In the proof of this lemma, we assume knowledge of the theory of modular cuts (see \cite[Section~7.2]{oxbook}).

\begin{lemma}
  \label{modcut}
  Let $M$ be a matroid with a $t$-echidna $\pi=(S_1,\dotsc,S_n)$. 
  Then there is a single-element extension~$M^+$ of $M$ by an element~$e$ such that $e \in \cl_{M^+}(X)$ if and only if $X$ contains at least $t-1$ spines of $\pi$, for all $X \subseteq E(M)$.
\end{lemma}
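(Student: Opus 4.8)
The plan is to construct the extension $M^+$ by exhibiting a modular cut $\mathcal{M}$ of $M$ and letting $e$ be the element added to $M$ relative to $\mathcal{M}$; then the element $e$ lies in $\cl_{M^+}(X)$ precisely when $\cl_M(X) \in \mathcal{M}$, so it suffices to choose $\mathcal{M}$ so that a flat $F$ of $M$ belongs to $\mathcal{M}$ if and only if $F$ contains at least $t-1$ spines of $\pi$. Accordingly, I would define
$$\mathcal{M} = \{F : F \text{ is a flat of } M \text{ and } F \supseteq S_{i_1} \cup \dots \cup S_{i_{t-1}} \text{ for some } (t-1)\text{-subset } \{i_1,\dots,i_{t-1}\} \subseteq [n]\}.$$
The substance of the proof is verifying that $\mathcal{M}$ is a modular cut, i.e.\ that it is upward closed in the lattice of flats (immediate from the definition, since a flat containing a given flat that contains $t-1$ spines also contains those $t-1$ spines) and that it is closed under taking intersections of modular pairs of flats.

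For the modular-pair condition, suppose $F_1, F_2 \in \mathcal{M}$ form a modular pair, so $r(F_1) + r(F_2) = r(F_1 \cap F_2) + r(F_1 \cup F_2)$. I would argue as follows. Each $F_k$ contains a union of $t-1$ spines; using \cref{lem:rep-orthog} (or the echidna axioms directly), the union of $t-1$ spines in a $t$-echidna is independent of rank $2(t-1)$, and in fact the $2t$-element circuits guaranteed by the $(t,2t)$-property let one control the closures of such unions. The key observation is that a flat $F$ lies in $\mathcal{M}$ if and only if $r_{M}\big(F \cup (\bigcup_{i \in I} S_i)\big) = r(F)$ for some $(t-1)$-set $I$ fails to increase rank — more usefully, that $F \in \mathcal{M}$ iff $F$ contains $t-1$ spines, which is a purely combinatorial condition on which spines are swallowed. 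So I would instead track, for each flat $F$, the set $\sigma(F) = \{i \in [n] : S_i \subseteq F\}$, and show $F \in \mathcal{M} \iff |\sigma(F)| \ge t-1$. Since $\sigma(F_1 \cap F_2) = \sigma(F_1) \cap \sigma(F_2)$ (an element's spine lies in both flats iff it lies in the intersection) and $\sigma(F_1 \cup F_2) \supseteq \sigma(F_1) \cup \sigma(F_2)$, the inclusion-exclusion bound $|\sigma(F_1) \cap \sigma(F_2)| \ge |\sigma(F_1)| + |\sigma(F_2)| - |\sigma(F_1 \cup F_2)|$ would give $|\sigma(F_1 \cap F_2)| \ge t-1$ provided $|\sigma(F_1 \cup F_2)|$ is not too large, which it need not be in general — so this naive count is insufficient and the modularity hypothesis $r(F_1) + r(F_2) = r(F_1 \cap F_2) + r(F_1 \cup F_2)$ must be used to pin down $\sigma$ of the join and meet. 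The precise link is that, by \cref{lem:rank-t} and the echidna structure, $r(F) = |\sigma(F)| \cdot 2 + (\text{contribution of the ``partial'' spines and of } E(M) - \bigcup S_i \text{ inside } F)$ only when $|\sigma(F)|$ is small; more robustly, $r(F) \le |\sigma(F)| + (\text{something})$ fails to be clean, so I expect the cleanest route is: show that if $F \in \mathcal{M}$ with witnessing $(t-1)$-set $I$, then every spine $S_j$ with $j \notin I$ satisfies $S_j \cap F \neq \emptyset \Rightarrow S_j \subseteq F$ is \emph{not} automatic, but rather use that $F$ is a flat together with orthogonality against the $2t$-element cocircuits $S_j \cup (\text{a transversal})$ to force the partial spines to behave.

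I expect the main obstacle to be exactly this verification that $\mathcal{M}$ is closed under intersections of modular pairs: one must rule out the possibility that two flats each containing $t-1$ spines meet in a flat containing only $t-2$ or fewer spines while still forming a modular pair. I would handle this by first establishing a rank formula for flats of the form $\bigcup_{i \in I} S_i$ and their closures — namely that $\cl_M\big(\bigcup_{i \in I} S_i\big)$ contains $S_j$ for $j \notin I$ only if $|I| \ge t$, which follows since for $|I| = t-1$ the set $\bigcup_{i \in I} S_i$ is independent and cannot span any new spine (else a $2(t-1)$-element circuit or a contradiction with \cref{lem:rep-orthog} arises) — and then arguing that for a general flat $F$, $\sigma(F) \ge t-1$ is equivalent to $r(F) \ge r\big(F'\big)$ where $F'$ is generated by $t-1$ whole spines inside $F$; the modular-pair rank identity then transfers this inequality to $F_1 \cap F_2$. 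Once $\mathcal{M}$ is confirmed to be a modular cut, the extension $M^+$ it defines has, by the theory of modular cuts, $e \in \cl_{M^+}(X) \iff \cl_M(X) \in \mathcal{M} \iff \cl_M(X)$ contains $\ge t-1$ spines $\iff X$ contains $\ge t-1$ spines (since a spine is a subset of $\cl_M(X)$ iff... here one checks $S_i \subseteq \cl_M(X)$ implies $S_i \subseteq X$ by orthogonality with a $2t$-cocircuit through one element of $S_i$ avoiding the rest of a transversal, using that $S_i$ has exactly two elements and the partial-spine analysis already done), completing the proof.
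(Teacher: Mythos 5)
Your high-level plan is the same as the paper's: define $\mathcal{M}$ to be the collection of flats containing at least $t-1$ spines, verify it is a modular cut, and invoke the correspondence between modular cuts and single-element extensions. However, the core of the proof — closure of $\mathcal{M}$ under intersections of modular pairs — is not actually carried out; your proposal circles it, acknowledging that the inclusion--exclusion count on $\sigma(F) = \{i : S_i \subseteq F\}$ is insufficient, and then stops short of a concrete argument. This is a genuine gap, not a presentation issue.

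Moreover, you assert that for $F \in \mathcal{M}$ witnessed by a $(t-1)$-set $I$, the implication ``$S_j \cap F \neq \emptyset \Rightarrow S_j \subseteq F$'' for $j \notin I$ is \emph{not} automatic, and you propose to recover it via orthogonality. In fact it \emph{is} automatic, and directly from the echidna axioms with no orthogonality: if $x_j \in F$ and $S_j = \{x_j,y_j\}$, then $\bigl(\bigcup_{i\in I} S_i\bigr) \cup S_j$ is a circuit of size $2t$, so $y_j \in \cl_M\bigl(\bigcup_{i\in I}S_i \cup \{x_j\}\bigr) \subseteq \cl_M(F) = F$. This observation is exactly what the paper uses to conclude that every $F \in \mathcal{M}$ is a union of spines, which in turn gives the clean rank formula $r(F) = (t-1) + |\sigma(F)|$ (for $|\sigma(F)| \ge t-1$). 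With that formula in hand, the modular-pair verification is a short computation: writing $u_k = |\sigma(F_k)|$ and $s = |\sigma(F_1) \cap \sigma(F_2)|$, one gets $r(F_1)+r(F_2) = 2(t-1)+u_1+u_2$, while if $s < t-1$ then $F_1 \cap F_2$ is independent of rank $2s$ and $r(F_1\cup F_2) \le (t-1)+(u_1+u_2-s)$, so $r(F_1\cup F_2)+r(F_1\cap F_2) \le (t-1)+s+u_1+u_2 < r(F_1)+r(F_2)$, contradicting modularity. Without the ``flats in $\mathcal{M}$ are unions of spines'' observation and the resulting rank formula, the argument does not close, and your alternative routes (using \cref{lem:rep-orthog}, orthogonality against $2t$-cocircuits) are not available in the generality of this lemma, which assumes only a $t$-echidna and not the $(t,2t)$-property.
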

\begin{proof}
  Let $$\mathcal{F} = \left\{\bigcup_{i\in I}S_i : I \subseteq [n] \textrm{ and } |I|=t-1\right\}.$$ 
  By the definition of a $t$-echidna, $\mathcal{F}$ is a collection of flats of $M$.
  Let $\mathcal{M}$ be the set of all flats of $M$ containing some flat $F \in \mathcal{F}$.
  We claim that $\mathcal{M}$ is a modular cut.
  Recall that, for distinct $F_1,F_2 \in \mathcal{M}$, the pair $(F_1,F_2)$ is \emph{modular} if $r(F_1) + r(F_2) = r(F_1 \cup F_2) + r(F_1 \cap F_2)$.
  It suffices to prove that for any $F_1,F_2 \in \mathcal{M}$ such that $(F_1,F_2)$ is a modular pair, $F_1 \cap F_2 \in \mathcal{M}$.

  For any $F \in \mathcal{M}$, since $F$ contains at least $t-1$ spines of $\pi$, and the union of any $t$ spines is a circuit (by the definition of a $t$-echidna), it follows that $F$ is a union of spines of $\pi$.
  So let $F_1,F_2 \in \mathcal{M}$ such that $F_1=\bigcup_{i\in I_1}S_i$ and $F_2=\bigcup_{i\in I_2}S_i$, where $I_1$ and $I_2$ are distinct subsets of $[n]$ with $u_1=|I_1| \ge t-1$ and $u_2=|I_2|\ge t-1$.
  Then
  \begin{align*}
    r(F_1) + r(F_2) &= (t-1 + u_1) + (t-1 + u_2) \\
    &=2(t-1) + u_1 + u_2.
  \end{align*}

  Suppose that $|I_1 \cap I_2| < t-1$. 
  Let $s=|I_1 \cap I_2|$.
  Then $F_1 \cup F_2$ is the union of $u_1 + u_2 - s \ge t-1$ spines of $\pi$.
  So \begin{align*}
    r(F_1 \cup F_2) + r(F_1 \cap F_2) &= \big(t-1 + (u_1 + u_2 - s)\big) +2s \\
    &= (t-1)+s+u_1+u_2.
  \end{align*}
  Since $s < t-1$, it follows that $r(F_1 \cup F_2) + r(F_1 \cap F_2) < r(F_1) + r(F_2)$. 
  So, for every modular pair $(F_1,F_2)$ with $F_1,F_2 \in \mathcal{M}$, we have $|I_1 \cap I_2| \ge t-1$, in which case $F_1 \cap F_2$ is a flat containing the union of $t-1$ spines of $\pi$, and hence $F_1 \cap F_2 \in \mathcal{M}$ as required.

  Now, there is a single-element extension corresponding to the modular cut~$\mathcal{M}$, and this extension satisfies the requirements of the lemma (see, for example, \cite[Theorem~7.2.3]{oxbook}).
%
%
\end{proof}

Let $M$ be a $t$-spike with associated partition $\pi=(A_1,\dotsc,A_r)$, for some integer $t \ge 2$, where $r \ge 2t-1$ by \cref{tspikeorder}.
Let $M^+$ be the single-element extension of $M$ by an element~$e$ described in \cref{modcut}.
%

Consider $M^+/e$.
We claim that $\pi$ is a $(t-1)$-echidna and a $t$-coechidna of $M^+/e$.
Let $X$ be the union of any $t-1$ spines of $\pi$.  Then $X$ is independent in $M$, and 
$X \cup \{e\}$ is a circuit in $M^+$, so $X$ is a circuit in $M^+/e$.
So $\pi$ is a $(t-1)$-echidna of $M^+/e$.
Now let $C^*$ be the union of any $t$ spines of $\pi$, and let $H=E(M)-C^*$.  Then $H$ is the union of at least $t-1$ spines, so $e \in \cl_{M^+}(H)$.  Now $H \cup \{e\}$ is a hyperplane in $M^+$, so $C^*$ is a cocircuit in $M^+$.
Hence $\pi$ is a $t$-coechidna of $M^+/e$.

We now repeat this process on $N=(M^+/e)^*$.
In $N$, the partition $\pi$ is a $t$-echidna and $(t-1)$-coechidna.
By \cref{modcut}, there is a single-element extension $N^+$ of $N$ (a single-element coextension of $M^+/e$) by an element $e'$. 
By the same argument as in the previous paragraph,
%
%
$\pi$ is a $(t-1)$-echidna and $(t-1)$-coechidna of $N^+/e$, so $N^+/e$ is a $(t-1)$-spike.
Let $M' = (N^+/e)^*$.

Note that $M^+/e$ is an elementary quotient of $M$, so $M$ is an elementary lift of $M^+/e$ where none of the $2(t-1)$-element circuits of $M^+/e$ are preserved in $M$.
Similarly, $M^+/e$ is an elementary quotient of $M'$ where none of the $2(t-1)$-element cocircuits are preserved.
So the $t$-spike $M$ can be obtained from the $(t-1)$-spike $M'$ using the earlier construction.



\sloppy
\bibliographystyle{abbrv}
\bibliography{lib}

\end{document}